\title{Homotopy colimits in stable representation theory.}
\date{February 2013}
\begin{document}

\begin{abstract}
We study the problem of existence and uniqueness of homotopy colimits in stable representation theory, where one typically does not have model category structures to guarantee that these homotopy colimits exist or have good properties. We get both negative results (homotopy cofibers fail to exist if there exist any objects of positive finite projective dimension!) and positive results (reasonable conditions under which homotopy colimits exist and are unique, even when model category structures fail to exist). Along the way, we obtain relative-homological-algebraic generalizations of classical theorems of Hilton-Rees and Oort. We describe some applications to Waldhausen $K$-theory and to deformation-theoretic methods in stable representation theory.
\end{abstract}

\maketitle
\tableofcontents

\section{Introduction.}

Suppose $\mathcal{C}$ is an abelian category---for examples, the category of modules over a ring, or the category of abelian sheaves on a scheme.
By {\em stable representation theory} one means the study of $\mathcal{C}$ under the equivalence relation in which one regards two maps $f,g$ in $\mathcal{C}$
as being equivalent, or ``homotopic,'' if $f-g$ factors through a projective object. One says that two objects in $\mathcal{C}$ are ``stably equivalent''
if they become isomorphic after imposing this equivalence relation on maps in $\mathcal{C}$.
Since stably equivalent objects in $\mathcal{C}$ have the same $\Ext^n_{\mathcal{C}}$ groups for all $n>0$,
stable representation theory is a natural topic of study if one wants to compute the higher $\Ext_{\mathcal{C}}$ groups for a large family of objects
(or perhaps {\em all} objects) in $\mathcal{C}$.

In this paper we consider the problem of the existence and uniqueness of {\em homotopy colimits} in stable representation theory. Specifically, if one has a 
diagram of objects in an abelian category $\mathcal{C}$, and all of the morphisms in the diagram are monomorphisms, one wants to know that replacing
an object in the diagram with a stably equivalent object will not change the colimit of the diagram, up to stable equivalence. 
Here are some reasons why one wants to do this:
\begin{itemize}
\item One wants to study and compute the stable algebraic $G$-theory associated to $\mathcal{C}$, that is, 
one wants to study {\em derived stable representation theory,} in the sense that $G_0(\mathcal{C})$ is the Grothendieck group completion of a monoid of stable equivalence classes of objects in $\mathcal{C}$, and the higher $G$-theory groups capture more subtle $K$-theoretic invariants of the stable representation theory of $\mathcal{C}$. We do some of this in our paper \cite{g-theory1}, using results from the present paper.

To construct the relevant $G$-theory, one needs the structure of a Waldhausen category on $\mathcal{C}$ 
in which
the weak equivalences are the stable equivalences. But one of the axioms required of a Waldhausen category, Waldhausen's axiom {\bf Weq 2} from \cite{MR802796},
is that, given a commutative diagram in $\mathcal{C}$
\[ \xymatrix{
X^{\prime} \ar[d] & Y^{\prime}\ar[l]\ar[r]\ar[d] & Z^{\prime}\ar[d] \\
X  & Y \ar[l]\ar[r] & Z }\]
in which the horizontal maps are cofibrations and the vertical maps
are weak equivalences, the induced map of pushouts
\[ X^{\prime}\coprod_{Y^{\prime}} Z^{\prime}\rightarrow X\coprod_Y Z \]
is a weak equivalence. In other words, {\em homotopy pushouts are well-defined
in $\mathcal{C}$.} So one must know something about well-definedness
of homotopy pushouts in order to do any $K$-theory or $G$-theory.
\item One wants to be able to make constructions in stable representation theory 
which come from {\em geometric realization of simplicial objects} 
and {\em totalization of cosimplicial objects.}
For example, topological Hochschild homology and topological Andre-Quillen 
homology occur as geometric realizations, while topological Hochschild
cohomology and topological Andre-Quillen cohomology occur as 
totalizations. Cocycles classifying deformations of modules and algebras
live in topological Hochschild cohomology and topological Andre-Quillen
cohomology groups, and if one wants to use deformation-theoretic methods
in stable representation theory, one wants to be able to form the
necessary totalizations. 

Meanwhile, topological Andre-Quillen homology
occurs as homology of cotangent complexes, which also has applications
in deformation theory, while under certain conditions, topological Hochschild 
homology receives a trace map from algebraic $K$-theory which is quite useful 
for making $K$-theory computations. So one wants to be able to 
form the necessary geometric realizations to construct these objects.

Geometric realizations are particular kinds of homotopy colimits and we study
their existence and uniqueness in this paper. On the other hand, totalizations are homotopy
limits, not colimits, so we put off their question of their existence and
uniqueness in stable representation theory for a later paper.
\end{itemize}

Existence and uniqueness of homotopy colimits is well-understood in the context
of a model category, but {\em abelian categories frequently do not admit the
structure of a model category} in which the weak equivalences are
the stable equivalences and the cofibrations are the monomorphisms! So one cannot rely on general model-category-theoretic
methods. 

In fact, we get some negative results, which preclude the existence of
such a model category structure (or even a Waldhausen category structure)
on an abelian category under surprising conditions:
a special case of our Cor.~\ref{main nonexistence corollary on htpy pushouts}
is that {\em if an abelian category $\mathcal{C}$ with enough projectives
has any objects of finite, positive projective dimension, then
homotopy cofibres fail to be well-defined in $\mathcal{C}$.}
As a consequence, if there exists a single object of projective 
dimension $\neq 0,\infty$, then $\mathcal{C}$ cannot have a model category structure
or a Waldhausen category structure with the desired cofibrations
and weak equivalences!

On the other hand, suppose that $\mathcal{C}$ has enough projectives and enough 
injectives,
and suppose that every projective object is injective. Then homotopy cofibers
(and homotopy pushouts in general) are unique up to stable equivalence; this is
a special case of our Cor.~\ref{main prop on well-definedness of homotopy pushouts}. As a consequence, $\mathcal{C}$ then satisfies Waldhausen's axiom {\bf Weq 2}.
This is substantially weaker than the assumption that $\mathcal{C}$ is quasi-Frobenius (i.e., projective objects
coincide with injective objects), which is the known condition
under which $\mathcal{C}$ admits a model category structure with the desired
properties, as in \cite{MR1650134}.

We also show that, when $\mathcal{C}$ has enough projectives and enough 
injectives, when every projective object is injective, and when every object
can be embedded appropriately into a projective object, then geometric realization of
simplicial objects is well-defined in $\mathcal{C}$; this is a special case of our
Cor.~\ref{geometric realization is well-defined}.

Throughout this paper, we work not just with abelian categories, but with
abelian categories with a specified {\em allowable class}; that is, we 
work in the context of {\em relative homological algebra}. A good treatment
of the basics of this subject is in Mac Lane's book \cite{MR1344215},
but the appendix to this paper is a self-contained introduction to the
subject, so that the reader will not have to look elsewhere for the basic
definitions. 

There are two reasons we work in the context of relative homological algebra:
\begin{itemize}
\item
One wants to study the effect of localization, i.e., change of allowable class, 
on algebraic $G$-theory. In \cite{g-theory1} we use such localization 
methods to obtain
some long exact sequences in stable algebraic $G$-theory groups, i.e.,
derived stable representation groups, of Hopf algebras.
Our results in \cite{g-theory1} require the results on well-definedness
of homotopy pushouts, in particular Cor.~\ref{existence result on weq2},
from the present paper.
\item 
Our main area of applications for these results is in the stable
representation theory of comodules over Hopf algebroids, especially those arising 
in stable homotopy theory. The $\Ext$ groups in the category of
comodules over various Hopf algebroids are the $E_2$-terms of generalized
Adams spectral sequences which are used to compute stable homotopy groups of 
various spaces and spectra, so the stable representation theory of these comodules
is quite important for topology.  If $(A,\Gamma)$ is a Hopf algebroid,
the relevant homological algebra is the one in which the
relative projective objects are the comodules which are tensored up from
$A$-modules; see Appendix 1 of \cite{MR860042} for these ideas.
Since comodules over certain Hopf algebroids are equivalent to quasicoherent modules over
certain Artin stacks, this direction is relevant to algebraic geometry as well.
\end{itemize}

We note that three essential technical tools in this paper are the 
relative-homological-algebraic generalizations of classical theorems in the theory of abelian categories: namely, our 
Lemmas~\ref{stable hom and nat trans of ext1} and
\ref{iso in ext1 means iso after direct sum with projectives} are the relative
versions of the main results of the 1961 paper \cite{MR0124377} of Hilton and
Rees, and our Prop.~\ref{generalization of oort's result}
is the relative version of the main result of the 1963 paper
\cite{MR0146233} of Oort. These generalizations are, to our knowledge, new,
but they are not difficult: one can simply mimic the proofs of Hilton-Rees and Oort, with appropriate adjustments for the more general setting.

Even if one has no interest in relative homological algebra or in abelian categories aside from categories of modules over a ring, 
our positive results still have some ``teeth'':
there is an open conjecture in pure algebra that the category of finitely-generated modules over
a ring $R$ is quasi-Frobenius if and only if every finitely-generated $R$-module embeds in a projective $R$-module. See \cite{MR1668220} for 
some discussion of this problem. This conjecture is the analogue for {\em finitely-generated} modules of the theorem of Faith and Walker (a good reference is \cite{MR6728009}), which states that the category of {\em all} $R$-modules is quasi-Frobenius if and 
only if every $R$-module embeds in a projective $R$-module. The main point of the section on homotopy pushouts and homotopy cofibres in the present paper
is that one weakening of the (relative) quasi-Frobenius condition---the condition
that every relatively projective object be relatively injective---suffices to ensure that homotopy pushouts are well-defined. 
This result is Prop.~\ref{main prop on well-definedness of homotopy pushouts}.
So if one wants to study the stable representation theory of {\em finitely-generated} $R$-modules (which is what one must do
in order for $K$-theoretic approaches like stable $G$-theory to be applicable, to avoid an Eilenberg swindle forcing all $K$-groups to be trivial!), 
then being able to embed such modules
in projectives is {\em not} known to imply the quasi-Frobenius property and hence such module categories are {\em not}
known to admit the structure of a model category---but one
still has some good properties (e.g. homotopy cofibres, and Waldhausen's axiom {\bf Weq 2}) in such categories of modules, by the results in the
present paper. 

Finally, we list the main results in this paper, for ease of reference:
\begin{itemize}
\item Cor.~\ref{main nonexistence corollary on htpy pushouts} states
that, in a relative abelian category with enough 
relative projectives and in which there exists an object
of positive, finite relative projective dimension, homotopy
pushouts (including homotopy cofibers) fail to be unique up to homotopy equivalence.
\item Cor.~\ref{nonexistence result on weq2} states that, under the same
hypotheses, such a relative abelian category does not satisfy
Waldhausen's axiom {\bf Weq 2}.
\item Prop.~\ref{main prop on well-definedness of homotopy pushouts}
states that, under mild assumptions on a relative abelian category,
if every relatively projective object is relatively injective,
then homotopy pushouts (including homotopy cofibers) {\em are} unique up to homotopy equivalence.
\item Cor.~\ref{existence result on weq2} states that,
under the same hypotheses, the relative abelian category
{\em does} satisfy Waldhausen's axiom {\bf Weq 2}.
\item Prop.~\ref{sequential colimits are well-defined}
states that, under mild assumptions on a relative abelian category,
if every relatively projective object is relatively injective,
then sequential homotopy colimits are unique up to homotopy equivalence.
\item Cor.~\ref{geometric realization is well-defined}
states that, under the same hypotheses, 
if the abelian category is AB3 and
every object embeds appropriately in a relatively projective object, 
then geometric realizations of simplicial objects
exist and are unique up to homotopy equivalence.
\end{itemize}

\section{Definitions.}

We define a category with a minimal amount of structure one wants in order to consider the notion of homotopy colimits therein:
\begin{definition} 
By a {\em weak Waldhausen category} we mean a category $\mathcal{C}$ equipped with a pair of subcategories
$cof(\mathcal{C}),we(\mathcal{C})$ of $\mathcal{C}$ such that:
\begin{itemize}
\item For each object $X$ of $\mathcal{C}$, the identity map on $X$ is in both $cof(\mathcal{C})$ and $we(\mathcal{C})$. (In other words,
$cof(\mathcal{C})$ and $we(\mathcal{C})$ are both ``lluf.'')
\end{itemize}
\end{definition}

The notion of a weak Waldhausen category is so general and so weakly-structured that one can't hope to prove much about such objects. 
Using some ideas from relative homological algebra, 
we now define a particular class of weak Waldhausen categories that we can actually prove some things about:
\begin{definition} 
By a {\em weak Waldhausen abelian category} we mean an abelian category $\mathcal{C}$ equipped with the structure of a weak Waldhausen category, and such that
there exists an allowable class $E$ in $\mathcal{C}$ with the property that:
\begin{itemize}
\item $we(\mathcal{C})$ is equal to the class of $E$-stable equivalences, and
\item $cof(\mathcal{C})$ is equal to the class of $E$-monomorphisms.
\end{itemize}
If $E$ is such an allowable class for a given weak Waldhausen abelian category $\mathcal{C}$, we will say that $E$ is {\em allowable for $\mathcal{C}$.}
\end{definition}
The appendix to this paper provides some useful classical definitions for the reader unfamiliar with relative homological algebra. In particular,
``allowable class'' is Def.~\ref{def of allowable class} and ``$E$-stable equivalence'' is Def.~\ref{def of stable equivalence}.

The following is Waldhausen's axiom {\bf Weq 2}, which we will be concerned with:
\begin{definition}
Let $\mathcal{C}$ be a weak Waldhausen category. We say that $\mathcal{C}$ {\em satisfies Waldhausen's axiom {\bf Weq 2}} 
if, for each commutative diagram
\[ \xymatrix{ 
X^{\prime}\ar[d] & Y^{\prime} \ar[l]\ar[r] \ar[d] & Z^{\prime} \ar[d] \\
X^{} & Y^{} \ar[l]\ar[r]  & Z^{}  }\]
in which the maps $Y^{\prime}\rightarrow X^{\prime}$ and $Y\rightarrow X$ are cofibrations and the vertical maps are all 
weak equivalences, then the map 
\[ X^{\prime}\coprod_{Y^{\prime}} Z^{\prime} \rightarrow X\coprod_Y Z\]
is a weak equivalence.
\end{definition}

\begin{definition}\label{def of homotopy colimit diagram}
Let $\mathcal{D}$ be a small category and let $\mathcal{C}$ be a category with a distinguished class of morphisms $cof(\mathcal{C})$ (for example,
$\mathcal{C}$ could be a weak Waldhausen abelian category). Suppose $\mathcal{C}$ has an initial object $0$.
By  a {\em $\mathcal{D}$-indexed homotopy colimit diagram in $\mathcal{C}$} we mean a functor $F: \mathcal{D}\rightarrow\mathcal{C}$ with the following properties:
\begin{itemize}
\item For each object $X$ of $\mathcal{D}$, the map $0\rightarrow F(X)$ is in $cof(\mathcal{C})$.
\item For each map $f: X\rightarrow Y$ in $\mathcal{D}$, the map $F(f): F(X)\rightarrow F(Y)$ is in $cof(\mathcal{C})$.
\end{itemize}
\end{definition}
We shall see, in Lemma~\ref{split monos are in E}, that if $\mathcal{C}$ is a weak Waldhausen abelian category with an allowable class that has sectile epics,
then the first condition (that the map $0\rightarrow F(X)$ be a cofibration)
in Def.~\ref{def of homotopy colimit diagram} is actually redundant---it is automatically satisfied. The second condition in Def.~\ref{def of homotopy colimit diagram}
is the significant one. See Def.~\ref{def of retractile monics} for the definition of ``having sectile epics.''

\begin{definition}
Let $\mathcal{D}$ be a small category and let $\mathcal{C}$ be a weak Waldhausen abelian category. Suppose $E$ is a class allowable for $\mathcal{C}$.
We say that {\em $\mathcal{D}$-indexed homotopy colimits of $E$-projectives in $\mathcal{C}$ are $E$-projective}
if, whenever $F:\mathcal{D}\rightarrow\mathcal{C}$ is a homotopy colimit diagram such that $F(X)$ is $E$-projective 
for every object $X$ of $\mathcal{D}$, the colimit $\colim F$ exists and is $E$-projective.
\end{definition}

The essential property that one wants in a homotopy colimit is that it should be homotopy-invariant. In a model category, one always knows that this is so.
But in our much, much more general situation, that of a weak Waldhausen abelian category, some homotopy colimit diagrams may fail to have homotopy-invariant colimits. 
When this is so, we say that the homotopy colimit in question {\em fails to be well-defined.} Precisely:
\begin{definition}
Let $\mathcal{D}$ be a small category and let $\mathcal{C}$ be a weak Waldhausen abelian category. Suppose
$\mathcal{C}$ has all $\mathcal{D}$-indexed colimits (e.g. $\mathcal{C}$ is an AB3, that is, co-complete, abelian category).
We say that {\em $\mathcal{D}$-indexed homotopy colimits are well-defined,} or {\em unique up to homotopy, in $\mathcal{C}$} if, 
for any pair of $\mathcal{D}$-indexed homotopy colimit diagrams $F,G: \mathcal{D}\rightarrow\mathcal{C}$ and any
map of diagrams $\phi: F\rightarrow G$ such that $\phi(X): F(X)\rightarrow G(X)$ is in $we(\mathcal{C})$ for every object $X$ of $\mathcal{D}$,
the induced map $\colim F\rightarrow \colim G$ is in $we(\mathcal{C})$.
\end{definition}

Finally, we include definitions related to lengths of $E$-projective resolutions, which we will use in the next section:
\begin{definition}
Suppose $\mathcal{C}$ is an abelian category, $E$ an allowable class in $\mathcal{C}$.
We say that a long exact sequence 
\[ \dots \stackrel{f_n}{\longrightarrow} P_{n-1} \stackrel{f_{n-1}}{\longrightarrow} \dots \stackrel{f_{1}}{\longrightarrow} P_0 
 \stackrel{f_0}{\longrightarrow} X \rightarrow 0 \]
is {\em $E$-long exact} if each short exact sequence
\[ 0\rightarrow \ker f_{i+1} \rightarrow P_i \rightarrow \coker f_{i+1} \rightarrow 0\]
is in $E$.
If each $P_i$ is an $E$-projective object, we say that the $E$-long exact sequence is an {\em $E$-projective resolution of $X$}.
\end{definition}

\begin{definition}\label{def of E-proj dim}
Suppose $\mathcal{C}$ is an abelian category, $E$ an allowable class in $\mathcal{C}$.
Suppose $n$ is a nonnegative integer.
We say that an object $X$ of $\mathcal{C}$ {\em has $E$-projective dimension $\leq n$} if there exists
an $E$-projective resolution of $X$
\[ 0 \rightarrow P_n \stackrel{f_n}{\longrightarrow} P_{n-1} \stackrel{f_{n-1}}{\longrightarrow} \dots \stackrel{f_{1}}{\longrightarrow} P_0 
 \stackrel{f_0}{\longrightarrow} X \rightarrow 0. \]

We say that $X$ {\em has $E$-projective dimension $n$} if it has $E$-projective dimension $\leq n$ but does not have
$E$-projective dimension $\leq n-1$.
\end{definition}

\section{Negative results on all homotopy colimits.}

In this section we prove that homotopy colimits in a weak Waldhausen abelian category fail to be unique up to homotopy 
unless colimits of appropriately-shaped relative projectives are themselves relatively projective. A precise statement is in 
Prop.~\ref{existence of htpy colimits implies projectivity of colimits of projectives}. 
An important application of Prop.~\ref{existence of htpy colimits implies projectivity of colimits of projectives} 
is the case of homotopy pushouts (and, in particular, homotopy cofibers) in Cor.~\ref{main nonexistence corollary on htpy pushouts}, and the question of whether
Waldhausen's axiom {\bf Weq 2} is satisfied, which we address in Cor.~\ref{nonexistence result on weq2}.

We begin by proving some lemmas that will allow us to prove both well-definedness and non-well-definedness results about homotopy colimits.
\begin{lemma}\label{easy lemma on nonvanishing of Ext^1}
Let $\mathcal{C}$ be an abelian category and let $E$ be an allowable class in $\mathcal{C}$. Suppose $\mathcal{C}$ has enough $E$-projectives.
Then an object $X$ of $\mathcal{C}$ is $E$-projective if and only if $\Ext^1_{\mathcal{C}/E}(X, Y)\cong 0$ for all objects $Y$ of $\mathcal{C}$.
\end{lemma}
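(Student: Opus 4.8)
The plan is to prove both implications through the interpretation of $\Ext^1_{\mathcal{C}/E}(X,Y)$ as the group of equivalence classes of short exact sequences $0\rightarrow Y\rightarrow Z\rightarrow X\rightarrow 0$ lying in $E$ (the relative, or $E$-admissible, extensions of $X$ by $Y$), with the zero class represented precisely by the split such sequences. This is the relative analogue of the classical Yoneda/Baer description, and I would either cite it from the appendix and \cite{MR1344215} or recover it directly from the $E$-projective-resolution definition of $\Ext^1_{\mathcal{C}/E}$.

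For the forward direction, suppose $X$ is $E$-projective and let $0\rightarrow Y\rightarrow Z\stackrel{p}{\rightarrow} X\rightarrow 0$ be any short exact sequence in $E$. Then $p$ is an $E$-epimorphism, so by the lifting property defining $E$-projectivity the identity map on $X$ lifts through $p$ to a section $s\colon X\rightarrow Z$ with $ps=\mathrm{id}_X$. Hence the extension splits and represents $0\in\Ext^1_{\mathcal{C}/E}(X,Y)$. Since $Y$ was arbitrary, $\Ext^1_{\mathcal{C}/E}(X,-)$ vanishes identically.

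For the reverse direction, suppose $\Ext^1_{\mathcal{C}/E}(X,Y)\cong 0$ for all $Y$. Using that $\mathcal{C}$ has enough $E$-projectives, I would choose an $E$-epimorphism $\epsilon\colon P\rightarrow X$ with $P$ an $E$-projective object; by the allowable-class axioms this $E$-epimorphism completes to a short exact sequence $0\rightarrow K\rightarrow P\stackrel{\epsilon}{\rightarrow} X\rightarrow 0$ in $E$, where $K=\ker\epsilon$. This sequence represents a class in $\Ext^1_{\mathcal{C}/E}(X,K)$, which is zero by hypothesis, so the sequence splits and exhibits $X$ as a retract of $P$. I would then invoke the standard fact that a retract (direct summand) of an $E$-projective object is again $E$-projective---immediate from the lifting property---to conclude that $X$ is $E$-projective.

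The only genuinely nontrivial point, and hence the step I expect to require the most care, is installing the extension-theoretic dictionary for $\Ext^1_{\mathcal{C}/E}$ in the relative setting: namely that a short exact sequence in $E$ is null in $\Ext^1_{\mathcal{C}/E}$ if and only if it splits, and that the presentation produced by ``enough $E$-projectives'' genuinely represents a class in relative $\Ext^1$ (which reduces to checking it lies in $E$, as above). Once this dictionary is in place both directions are formal, and the remaining ingredients---the lifting property of $E$-projectives and the closure of the $E$-projectives under retracts---follow straight from the definitions.
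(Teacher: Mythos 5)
Your proof is correct, but the backward implication runs along a genuinely different track from the paper's. The paper argues by contrapositive and never leaves the lifting definition: if $X$ is not $E$-projective, there is an $E$-epimorphism $f\colon A\rightarrow B$ and a map $g\colon X\rightarrow B$ that fails to lift, and the exact sequence $\hom_{\mathcal{C}}(X,A)\rightarrow\hom_{\mathcal{C}}(X,B)\rightarrow\Ext^1_{\mathcal{C}/E}(X,\ker f)$ then forces $g$ to have nonzero image in $\Ext^1_{\mathcal{C}/E}(X,\ker f)$, exhibiting a witness $Y=\ker f$ with nonvanishing $\Ext^1$. You instead take the canonical $E$-projective presentation $0\rightarrow K\rightarrow P\rightarrow X\rightarrow 0$, use the vanishing of $\Ext^1_{\mathcal{C}/E}(X,K)$ to split it, and conclude via closure of $E$-projectives under retracts. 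Both are sound; the trade-off is in the background machinery each invokes. The paper needs only the six-term exactness of the $\hom$--$\Ext^1$ sequence associated to a sequence in $E$, whereas you need the full Yoneda-style dictionary identifying $\Ext^1_{\mathcal{C}/E}(X,Y)$ with congruence classes of $E$-extensions, with the zero class given exactly by the split ones --- you correctly flag this as the delicate point, and it is indeed available in the relative setting (Mac Lane, Ch.~XII) once there are enough $E$-projectives, which is hypothesized here. Your route buys the slightly stronger and more memorable intermediate statement that $X$ with vanishing $\Ext^1$ is a retract of an $E$-projective; the paper's route is more economical in its prerequisites. Your forward direction agrees with the paper's, which simply declares it classical.
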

\begin{proof}
Suppose $X$ is $E$-projective. Then vanishing of $\Ext^1_{\mathcal{C}/E}(X, Y)$ is classical (and easy).

Now suppose $X$ is not $E$-projective. Then there exists some $E$-epimorphism $f: A\rightarrow B$ and a map $g: X\rightarrow B$ which does not lift through $f$.
In other words, the element $g\in \hom_{\mathcal{C}}(X, B)$ is not in the image of the map 
\[ \hom_{\mathcal{C}}(X, A) \rightarrow \hom_{\mathcal{C}}(X, B).\]
But we have the exact sequence 
\[ \hom_{\mathcal{C}}(X, A) \rightarrow \hom_{\mathcal{C}}(X, B)\rightarrow \Ext^1_{\mathcal{C}/E}(X, \ker f) \]
and so $g$ must have nonzero image in $\Ext^1_{\mathcal{C}/E}(X, \ker f)$. So $\Ext^1_{\mathcal{C}/E}(X, \ker f)$ is nontrivial.
So by contrapositive, vanishing of $\Ext^1_{\mathcal{C}/E}(X, Y)$ for all $Y$ implies that $X$ is $E$-projective.
\end{proof}

\begin{lemma}\label{direct sum of elements of E is in E}
Suppose $\mathcal{C}$ is an abelian category, $E$ an allowable class in $\mathcal{C}$ with sectile epics.
Suppose $\mathcal{C}$ has enough $E$-projectives. Then any finite direct sum of members of $E$ is in $E$.
\end{lemma}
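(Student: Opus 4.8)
The plan is to reduce to the case of two summands, and then to prove that the direct-sum sequence lies in $E$ by checking that its epimorphism is an $E$-epimorphism, which I detect by lifting an $E$-projective cover of the target through it one component at a time. By induction on the number of summands it suffices to treat a direct sum of two members of $E$, say
\[ \xi_i\colon\quad 0\rightarrow A_i \stackrel{j_i}{\longrightarrow} B_i \stackrel{p_i}{\longrightarrow} C_i \rightarrow 0,\qquad i=1,2, \]
each lying in $E$; the one-summand case is the hypothesis, and the empty case records only that the split sequence on the zero object lies in $E$ (which follows from the sectile-epics hypothesis, Def.~\ref{def of retractile monics}).

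The direct sum
\[ \xi_1\oplus\xi_2\colon\quad 0\rightarrow A_1\oplus A_2 \stackrel{j_1\oplus j_2}{\longrightarrow} B_1\oplus B_2 \stackrel{p_1\oplus p_2}{\longrightarrow} C_1\oplus C_2 \rightarrow 0 \]
is again a short exact sequence, and by the definition of an allowable class (together with its closure under isomorphism) it lies in $E$ as soon as its epimorphism $p_1\oplus p_2$ is an $E$-epimorphism, since the kernel of $p_1\oplus p_2$ is canonically $A_1\oplus A_2$. So it suffices to show that $p_1\oplus p_2$ is an $E$-epimorphism.

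Here I would use the hypothesis that $\mathcal{C}$ has enough $E$-projectives: choose an $E$-epimorphism $q\colon Q\rightarrow C_1\oplus C_2$ with $Q$ an $E$-projective object. Writing $\pi_i\colon C_1\oplus C_2\rightarrow C_i$ for the projections, the composite $\pi_i q\colon Q\rightarrow C_i$ lifts along the $E$-epimorphism $p_i$ to a map $\widetilde{q}_i\colon Q\rightarrow B_i$ with $p_i\widetilde{q}_i=\pi_i q$, because $Q$ is $E$-projective. Assembling the $\widetilde{q}_i$ into a single map $\widetilde{q}\colon Q\rightarrow B_1\oplus B_2$ then gives $(p_1\oplus p_2)\widetilde{q}=q$.

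Finally, since $q=(p_1\oplus p_2)\widetilde{q}$ is an $E$-epimorphism, the cancellation property for allowable epimorphisms forces the left factor $p_1\oplus p_2$ to be an $E$-epimorphism as well, whence $\xi_1\oplus\xi_2\in E$ and the induction closes. I expect this last cancellation step to be the main obstacle: the real content is to know that $E$-epimorphisms can be detected by lifting $E$-projective covers through them, and it is precisely here that enough $E$-projectives and the sectile-epics assumption are needed within the present axiomatization. As a cross-check, should pullback-stability of $E$-epimorphisms be available, one can bypass the covers entirely by writing $p_1\oplus p_2=(p_1\oplus 1_{C_2})\circ(1_{B_1}\oplus p_2)$, recognizing each factor as a pullback of $p_1$ or $p_2$ and hence an $E$-epimorphism, and invoking closure of $E$-epimorphisms under composition.
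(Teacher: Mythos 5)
Your proof is correct and follows essentially the same route as the paper's: both reduce the claim to showing that the direct-sum epimorphism is an $E$-epimorphism, and both verify this by lifting maps out of $E$-projectives one component at a time. The only immaterial difference is at the final detection step --- you apply the sectile-epics cancellation to a single $E$-projective cover of $C_1\oplus C_2$, whereas the paper checks surjectivity of $\hom_{\mathcal{C}}(P,-)$ for every $E$-projective $P$ and invokes Heller's Theorem~\ref{heller's theorem}, which is equivalent under the stated hypotheses --- together with one trivial slip: the empty direct sum lies in $E$ by Def.~\ref{def of allowable class} (allowable classes contain every short exact sequence with a zero object), not by the sectile-epics condition.
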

\begin{proof}
Let $I$ be a finite set and 
\[ 0\rightarrow X_i\rightarrow Y_i\rightarrow Z_i \rightarrow 0\]
be a member of $E$ for every $i\in I$. Then, for any $E$-projective object $P$ of $\mathcal{C}$,
we have the commutative diagram
\[\xymatrix{ 
\hom_{\mathcal{C}}(P, \oplus_i Y_i) \ar[r] \ar[d]^{\cong} & \hom_{\mathcal{C}}(P, \oplus_i Z_i) \ar[d]^{\cong} \\
\oplus_i \hom_{\mathcal{C}}(P, Y_i) \ar[r] & \oplus_i \hom_{\mathcal{C}}(P, Z_i).
}\]
The bottom horizontal map is a surjection of abelian groups, so the top horizontal map is as well. Now by Heller's theorem~\ref{heller's theorem},
the map $\oplus_i Y_i\rightarrow\oplus_i Z_i$ is an $E$-epimorphism.
So the short exact sequence
\[ 0 \rightarrow \oplus_i X_i \rightarrow \oplus_i Y_i\rightarrow \oplus_i Z_i \rightarrow 0\]
is in $E$.
\end{proof}

\begin{lemma} {\bf (Shearing $E$-monics.)} \label{shearing E-monics}
Let $\mathcal{C}$ be an abelian category and let $E$ be an allowable class in $\mathcal{C}$.
Suppose $X,Y,Z$ are objects in $\mathcal{C}$ and suppose we have $E$-monomorphisms
$e: X\rightarrow Y$ and $f: Z\rightarrow Y$.
Let $s$ be the morphism 
\[ s: X\oplus Z\rightarrow Y\oplus Z\]
given by the matrix of maps
\[ s= \left[ \begin{array}{ll} e & 0 \\ f & \id_Z \end{array}\right] .\]
Then $\coker s$ is naturally isomorphic to $\coker e$.
Furthermore, if $\mathcal{C}$ has enough $E$-injectives and $E$ has retractile monics, then $s$ is an $E$-monomorphism.
\end{lemma}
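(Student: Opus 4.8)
The plan is to treat the two assertions separately, since the cokernel computation requires no hypotheses on $E$ beyond the ambient abelian structure, whereas the claim that $s$ is an $E$-monomorphism uses the injective-side hypotheses.

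For the cokernel assertion, I would exhibit $s$ as the composite of the block-diagonal map $e\oplus\id_Z\colon X\oplus Z\to Y\oplus Z$ with a \emph{shear automorphism}: the off-diagonal entry $f$ appearing in the matrix for $s$ defines a unipotent triangular endomorphism $\sigma$ (identities on the diagonal, $f$ off the diagonal), which is an automorphism with explicit inverse obtained by negating $f$. Thus $s=\sigma\circ(e\oplus\id_Z)$, or $(e\oplus\id_Z)\circ\sigma$ depending on which side the shear sits, and since composing with an isomorphism does not change the cokernel, $\coker s\cong\coker(e\oplus\id_Z)$. As the cokernel of a direct sum is the direct sum of the cokernels, $\coker(e\oplus\id_Z)\cong\coker e\oplus\coker(\id_Z)=\coker e$. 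I would note that this isomorphism is natural, since it is assembled from the canonical projection $Y\oplus Z\to\coker e$ together with the universal property of cokernels, and is therefore compatible with morphisms of the data $(e,f)$.

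For the second assertion the strategy is to reduce to the block-diagonal map and then dualize Lemma~\ref{direct sum of elements of E is in E}. First, $\id_Z$ fits into the split (hence allowable) short exact sequence $0\to Z\stackrel{\id_Z}{\longrightarrow}Z\to 0\to 0$, so $\id_Z$ is an $E$-monomorphism, while $e$ is one by hypothesis. The hypotheses of the present claim---enough $E$-injectives and retractile monics---are precisely the duals of the hypotheses of Lemma~\ref{direct sum of elements of E is in E} (enough $E$-projectives and sectile epics). Applying the evident dual of that lemma, whose proof merely replaces Heller's theorem~\ref{heller's theorem} by its dual characterization of $E$-monomorphisms in terms of $\hom_{\mathcal{C}}(-,I)$ for $E$-injective $I$, the direct-sum sequence $0\to X\oplus Z\stackrel{e\oplus\id_Z}{\longrightarrow}Y\oplus Z\to\coker e\to 0$ lies in $E$; in particular $e\oplus\id_Z$ is an $E$-monomorphism.

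Finally, I would observe that the short exact sequence $0\to X\oplus Z\stackrel{s}{\longrightarrow}Y\oplus Z\to\coker e\to 0$, using the identification $\coker s\cong\coker e$ from the first part, is \emph{isomorphic} as a short exact sequence to the direct-sum sequence above, via the shear automorphism on $X\oplus Z$ or on $Y\oplus Z$ as appropriate. Since an allowable class is closed under isomorphism of short exact sequences, the $s$-sequence also lies in $E$, so $s$ is an $E$-monomorphism. The main obstacle is the middle step: one must be confident that Lemma~\ref{direct sum of elements of E is in E} dualizes cleanly---that is, that the dual of Heller's theorem is genuinely available under the hypotheses of enough $E$-injectives and retractile monics, so that finite direct sums of $E$-monomorphisms remain $E$-monomorphisms. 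Everything else is formal manipulation of shear automorphisms together with the invariance of cokernels, and of membership in $E$, under isomorphism.
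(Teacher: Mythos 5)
Your argument is correct, and it differs from the paper's in instructive ways, so let me compare. For the cokernel claim the paper assembles a nine-lemma-style diagram whose middle rows are $0\to X\to Y\to\coker e\to 0$, $0\to X\oplus Z\to Y\oplus Z\to\coker s\to 0$, and $0\to Z\to Z\to 0\to 0$, with the split inclusions and projections as columns, and reads off $\coker e\cong\coker s$ from exactness of the last column; your factorization $s=\sigma\circ(e\oplus\id_Z)$, with $\sigma$ the unipotent shear $(y,z)\mapsto(y+f(z),z)$ of $Y\oplus Z$, reaches the same conclusion more directly. (One caveat: only this version of your factorization typechecks --- the shear must live on the codomain $Y\oplus Z$, since $f\colon Z\to Y$ cannot serve as an off-diagonal entry of an endomorphism of $X\oplus Z$, so the hedge ``or $(e\oplus\id_Z)\circ\sigma$'' should be dropped.) For the second claim the paper composes $s$ with the non-invertible map $t\colon Y\oplus Z\to Y\oplus Y$ with entries $\id_Y,0,-f,f$, checks that $t\circ s=e\oplus f$, applies the dual of Lemma~\ref{direct sum of elements of E is in E} to see that $e\oplus f$ is an $E$-monomorphism, and then uses the retractile-monics hypothesis to cancel $t$. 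You instead apply the same dualized lemma to $e\oplus\id_Z$ and finish by transporting membership in $E$ across the isomorphism of short exact sequences furnished by $\sigma$, invoking only the closure-under-isomorphism clause of Def.~\ref{def of allowable class}. Both proofs rest on the same nontrivial input --- the dual of Heller's theorem~\ref{heller's theorem}, available because the notion of allowable class is self-dual --- which is exactly the point you flag as the main obstacle, and which the paper addresses in the parenthetical remark inside its own proof. Your route uses retractile monics only through that dualized lemma, whereas the paper needs it a second time for the cancellation step; in exchange, the paper's route produces the identity $t\circ s=e\oplus f$ that explains the lemma's name.
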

\begin{proof}
We first show that $\coker e\cong \coker s$. But this follows immediately from the commutative diagram with exact rows and exact columns:
\[\xymatrix{
0 \ar[r]\ar[d] & 0 \ar[r]\ar[d] & 0 \ar[r]\ar[d] & 0 \ar[r]\ar[d] & 0 \ar[d] \\
0 \ar[r] \ar[d] & X \ar[r]^{e} \ar[d]^i & Y \ar[r]\ar[d]^i & \coker e \ar[r]\ar[d] & 0 \ar[d] \\
0 \ar[r] \ar[d] & X\oplus Z \ar[r]^s \ar[d]^{\pi} & Y\oplus Z \ar[d]^{\pi} \ar[r] & \coker s \ar[d] \ar[r] & 0 \ar[d] \\
0 \ar[r] \ar[d] & Z \ar[r]^{\id} \ar[d] & Z \ar[d] \ar[r] & 0 \ar[d] \ar[r] & 0 \ar[d] \\
0\ar[r] & 0\ar[r] & 0\ar[r] & 0\ar[r] & 0
}\]
in which the maps marked $\pi$ are projections to the second summand, and the maps marked $i$ are inclusions as the first summand.

Now assume that $E$ has retractile monics, and let $t: Y\oplus Z \rightarrow Y\oplus Y$ be the map given by the matrix of maps
\[ t = \left[ \begin{array}{ll} \id_Y & 0 \\ -f & f \end{array} \right] .\]
Then a matrix multiplication reveals that the composite map
$t\circ s: X\oplus Z\rightarrow Y\oplus Y$ is the direct sum map $e\oplus f$, a direct sum of $E$-monomorphisms, hence by 
Lemma~\ref{direct sum of elements of E is in E}, itself an $E$-monomorphism. (Note that, by taking the opposite category and noticing that
the definition of an allowable class in an abelian category is self-dual, we get the conclusion of Lemma~\ref{direct sum of elements of E is in E}
if $E$ has retractile monics and $\mathcal{C}$ has enough $E$-injectives.)
Now since $t\circ s$ is an $E$-monomorphism and
$E$ is assumed to have retractile monics, $s$ is also an $E$-monomorphism.
\end{proof}

\begin{lemma}\label{split monos are in E}
Suppose $\mathcal{C}$ is an abelian category, $E$ an allowable class in $\mathcal{C}$ which has sectile epics.
Suppose $\mathcal{C}$ has enough $E$-projectives. 
Any split monomorphism in $\mathcal{C}$ is an $E$-monomorphism.
Furthermore, any split epimorphism in $\mathcal{C}$ is an $E$-epimorphism.
\end{lemma}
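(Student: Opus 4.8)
The plan is to prove the statement about split epimorphisms first, by a direct appeal to Heller's theorem, and then to deduce the statement about split monomorphisms from it via the splitting lemma. The point to keep in mind is that the present hypotheses---$\mathcal{C}$ has enough $E$-projectives and $E$ has sectile epics---are exactly those under which Heller's theorem~\ref{heller's theorem} is available, characterizing $E$-epimorphisms: an epimorphism $f\colon A\to B$ is an $E$-epimorphism if and only if, for every $E$-projective object $P$, the induced map $\hom_{\mathcal{C}}(P,A)\to\hom_{\mathcal{C}}(P,B)$ is surjective. Notice that I cannot hope to treat monomorphisms directly by the same route, since the dual form of Heller's theorem would require enough $E$-injectives, which is not assumed here; this is why the epimorphism case must come first.

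So I would first take a split epimorphism $p\colon Y\to X$, with section $s\colon X\to Y$ satisfying $p s=\id_X$. Being split, $p$ is an epimorphism. For any object $P$ and any $g\in\hom_{\mathcal{C}}(P,X)$, the element $s g\in\hom_{\mathcal{C}}(P,Y)$ satisfies $p\circ(s g)=(p s)g=g$, so $g$ lies in the image of $\hom_{\mathcal{C}}(P,p)$. Hence $\hom_{\mathcal{C}}(P,p)$ is surjective for \emph{every} object $P$, and in particular for every $E$-projective $P$; by Heller's theorem, $p$ is an $E$-epimorphism.

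Next I would take a split monomorphism $i\colon X\to Y$. By the splitting lemma, the short exact sequence
\[ 0\rightarrow X\stackrel{i}{\longrightarrow} Y\stackrel{q}{\longrightarrow}\coker i\rightarrow 0\]
splits; in particular its epimorphism $q\colon Y\to\coker i$ is a split epimorphism. By the case already handled, $q$ is an $E$-epimorphism. Since a short exact sequence is recovered up to canonical isomorphism from its epimorphism (the monomorphism being the kernel of the epimorphism), and since $E$ is closed under isomorphism, the displayed short exact sequence is a member of $E$; therefore its monomorphism $i$ is an $E$-monomorphism, as desired.

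The only genuinely delicate step is the last passage, from ``$q$ is an $E$-epimorphism'' to ``$i$ is an $E$-monomorphism.'' This rests on the elementary but essential observation that, in an allowable class, a short exact sequence belongs to $E$ precisely when its epimorphism is an $E$-epimorphism (equivalently, when its monomorphism is an $E$-monomorphism), because $E$ is closed under isomorphism and each short exact sequence is determined by either of its two maps. Once this is in hand the argument is routine: all the real content is carried by Heller's theorem, and the hypotheses on $E$ are present exactly so that Heller's theorem applies.
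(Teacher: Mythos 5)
Your proof is correct, but it takes a different route from the paper's. The paper observes that a split monomorphism sits in a short exact sequence that decomposes as a direct sum of the two degenerate sequences $0\to X\to X\to 0\to 0$ and $0\to 0\to\coker f\to\coker f\to 0$, both of which lie in $E$ by the definition of an allowable class, and then invokes Lemma~\ref{direct sum of elements of E is in E} to conclude that the direct sum is in $E$; the split epimorphism case is handled by a ``totally analogous argument.'' You instead apply Heller's theorem~\ref{heller's theorem} directly to the split epimorphism (the section furnishes the required lift against every $E$-projective), and then derive the monomorphism statement from the epimorphism statement by noting that a short exact sequence lies in $E$ exactly when its epimorphism is an $E$-epic, since the monomorphism is recovered as the kernel and $E$ is closed under isomorphism of short exact sequences. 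The two arguments are close cousins --- the paper's direct-sum lemma is itself proved via Heller's theorem, so both ultimately rest on the same engine --- but yours is slightly more economical in bypassing the direct-sum lemma, and your remark that the dual of Heller's theorem is unavailable (since enough $E$-injectives is not assumed), which forces the epimorphism case to come first, is a genuine point that the paper's symmetric phrasing glosses over. Both proofs are valid under the stated hypotheses.
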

\begin{proof}
Any split monomorphism $f$ fits into a short exact sequence
\begin{equation}\label{ses in stupid lemma} 0 \rightarrow X \stackrel{f}{\longrightarrow} Y\rightarrow \coker f \rightarrow 0 \end{equation}
in which $Y$ decomposes as $X \oplus \coker f$, i.e., short exact sequence~\ref{ses in stupid lemma} is a direct sum of the short exact
sequences
\[ 0 \rightarrow X \rightarrow X \rightarrow 0 \rightarrow 0\]
and 
\[ 0 \rightarrow 0 \rightarrow \coker f\rightarrow \coker f\rightarrow 0,\]
both of which are in $E$ by the definition of an allowable class. 
Now by Lemma~\ref{direct sum of elements of E is in E}, short exact sequence~\ref{ses in stupid lemma} is in $E$.
So $f$ is an $E$-monomorphism. 

A totally analogous argument proves the same statement for split epimorphisms being $E$-epimorphisms.
\end{proof}

\begin{lemma}\label{composite of E-epics is E-epic}
Suppose $\mathcal{C}$ is an abelian category, $E$ an allowable class in $\mathcal{C}$ with sectile epics.
Suppose $\mathcal{C}$ has enough $E$-projectives.
A composite of $E$-epimorphisms is an $E$-epimorphism.
\end{lemma}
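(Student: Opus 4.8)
The plan is to reduce the statement to a property of the $E$-projective objects and then pass back via Heller's theorem. Let $f\colon X\to Y$ and $g\colon Y\to Z$ be $E$-epimorphisms; I want to show that $g\circ f\colon X\to Z$ is an $E$-epimorphism. First I would observe that $g\circ f$ is at least an ordinary epimorphism in $\mathcal{C}$, since a composite of categorical epimorphisms is again an epimorphism, and in an abelian category this lets me form the short exact sequence $0\to\ker(g\circ f)\to X \stackrel{g\circ f}{\longrightarrow} Z\to 0$, which is the candidate member of $E$.

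The core step is to test this candidate sequence against the $E$-projective objects. By the defining property of $E$-projectivity, for every $E$-projective object $P$ the map $\hom_{\mathcal{C}}(P,X)\to\hom_{\mathcal{C}}(P,Y)$ induced by $f$ is surjective, and likewise the map $\hom_{\mathcal{C}}(P,Y)\to\hom_{\mathcal{C}}(P,Z)$ induced by $g$ is surjective, precisely because $f$ and $g$ are $E$-epimorphisms. Since a composite of surjections of abelian groups is again a surjection, the map $\hom_{\mathcal{C}}(P,X)\to\hom_{\mathcal{C}}(P,Z)$ induced by $g\circ f$ is surjective for every $E$-projective $P$.

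Finally, having an ordinary epimorphism $g\circ f$ for which $\hom_{\mathcal{C}}(P,-)$ is surjective on every $E$-projective $P$, I would apply Heller's theorem~\ref{heller's theorem} (this is exactly where the hypothesis that $\mathcal{C}$ has enough $E$-projectives is used, just as in the proof of Lemma~\ref{direct sum of elements of E is in E}) to conclude that the short exact sequence $0\to\ker(g\circ f)\to X\to Z\to 0$ lies in $E$, i.e.\ that $g\circ f$ is an $E$-epimorphism.

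I do not expect a genuine obstacle here: the only subtlety is that the argument runs through the converse direction of Heller's theorem, namely the implication ``surjective on $\hom_{\mathcal{C}}(P,-)$ for all $E$-projectives $P$ $\Rightarrow$ the sequence is in $E$,'' which is the implication that requires enough $E$-projectives; the forward direction, giving surjectivity of $\hom_{\mathcal{C}}(P,-)$ on $E$-epimorphisms, is just the definition of $E$-projectivity. The sectile-epics hypothesis enters only insofar as it is part of the standing hypotheses under which Heller's theorem and the auxiliary lemmas of this section are available.
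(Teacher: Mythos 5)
Your proposal is correct and follows essentially the same route as the paper's own proof: lift a map from an $E$-projective $P$ first over $g$ and then over $f$ (equivalently, compose the two surjections on $\hom_{\mathcal{C}}(P,-)$), and then invoke Heller's theorem~\ref{heller's theorem} to conclude that $g\circ f$ is an $E$-epimorphism. Your explicit remark that $g\circ f$ is an ordinary epimorphism, and your identification of exactly which direction of Heller's theorem uses the enough-$E$-projectives hypothesis, are accurate refinements of the same argument.
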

\begin{proof}
Let $f:X\rightarrow Y$ and $g: Y \rightarrow Z$ be $E$-epimorphisms. Let $P$ be an $E$-projective object equipped with a map
$P \rightarrow Z$. Then, since $g$ is an $E$-epimorphism, $P\rightarrow Z$ lifts over $g$ to a map $P \rightarrow Y$, which in turn lifts over
$f$ since $f$ is an $E$-epimorphism. So every map from an $E$-projective to $Z$ lifts over $g\circ f$. Now, by Heller's theorem~\ref{heller's theorem},
$g\circ f$ is an $E$-epimorphism.
\end{proof}

The following two lemmas are the relative-homological-algebraic generalizations of the main results of Hilton and Rees's paper \cite{MR0124377}.
We provide proofs, but they are fairly easy generalizations of those already in the literature.
\begin{lemma} \label{stable hom and nat trans of ext1}
Suppose $\mathcal{C}$ is an abelian category with an allowable class $E$.
Suppose $\mathcal{C}$ has enough $E$-projectives.
Then $E$-stable equivalence classes of morphisms $X\rightarrow Y$ in $\mathcal{C}$ are in bijection
with natural transformations of functors 
\[ \Ext^1_{\mathcal{C}/E}(Y, -)\rightarrow
\Ext^1_{\mathcal{C}/E}(X, -).\] 
This bijection is natural in $X$ and $Y$.
\end{lemma}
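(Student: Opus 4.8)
The plan is to construct the bijection explicitly, following the template of Hilton and Rees but replacing ordinary projectives and $\Ext$ by their $E$-relative counterparts, and taking care to use only properties that survive the passage to the relative setting. First I would fix, for each object $Y$, an $E$-short exact sequence $0 \to \Omega Y \xrightarrow{i_Y} P_Y \xrightarrow{p_Y} Y \to 0$ with $P_Y$ an $E$-projective (possible since $\mathcal{C}$ has enough $E$-projectives), and write $\kappa_Y \in \Ext^1_{\mathcal{C}/E}(Y,\Omega Y)$ for the class of this ``characteristic'' extension. Throughout I use the long exact sequences of relative $\Ext$ attached to a sequence in $E$, their connecting maps, and the compatibility of push-forward and pull-back of extensions; these are standard in relative homological algebra (see \cite{MR1344215} or the appendix).

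The easy direction is the map $\Phi$ sending the $E$-stable class of $\alpha : X \to Y$ to the natural transformation $\alpha^* : \Ext^1_{\mathcal{C}/E}(Y,-) \to \Ext^1_{\mathcal{C}/E}(X,-)$. This descends to $E$-stable classes: if $\alpha$ factors through an $E$-projective $Q$, then $\alpha^*$ factors through $\Ext^1_{\mathcal{C}/E}(Q,-)$, which vanishes by Lemma~\ref{easy lemma on nonvanishing of Ext^1}.

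The substance is the inverse $\Psi$. Given $\eta$, I set $\theta := \eta_{\Omega Y}(\kappa_Y) \in \Ext^1_{\mathcal{C}/E}(X,\Omega Y)$. First I would note that $\eta$ is completely determined by $\theta$: since $\Ext^1_{\mathcal{C}/E}(P_Y,-)=0$, the connecting map $\hom_{\mathcal{C}}(\Omega Y, M) \to \Ext^1_{\mathcal{C}/E}(Y,M)$, $f \mapsto f_*\kappa_Y$, is surjective, so every class $\xi \in \Ext^1_{\mathcal{C}/E}(Y,M)$ has the form $f_*\kappa_Y$, and naturality of $\eta$ then forces $\eta_M(\xi)=f_*\theta$. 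The main obstacle is the ``de-syzygy'' step: producing from $\theta$ an honest morphism $X\to Y$. Here one must resist the tempting identification of $\Ext^1_{\mathcal{C}/E}(X,-)$ with $E$-stable maps out of the syzygy $\Omega X$, since that identification is \emph{false} without $E$-injectivity hypotheses. Instead I use the connecting map $\partial : \hom_{\mathcal{C}}(X,Y) \to \Ext^1_{\mathcal{C}/E}(X,\Omega Y)$, whose image is exactly $\ker\bigl((i_Y)_* : \Ext^1_{\mathcal{C}/E}(X,\Omega Y) \to \Ext^1_{\mathcal{C}/E}(X,P_Y)\bigr)$, and I claim $\theta$ lies in this kernel. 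Indeed, by naturality of $\eta$ applied to $i_Y$ we get $(i_Y)_*\theta = \eta_{P_Y}\bigl((i_Y)_*\kappa_Y\bigr)$, and $(i_Y)_*\kappa_Y=0$ because the pushout of $\kappa_Y$ along $i_Y$ splits: the map $i_Y$ extends over itself via $\id_{P_Y}$. Hence $\theta=\partial(\phi)$ for some $\phi : X \to Y$, and $\phi$ is well-defined modulo the image of $(p_Y)_* : \hom_{\mathcal{C}}(X,P_Y)\to\hom_{\mathcal{C}}(X,Y)$, i.e.\ modulo maps factoring through the $E$-projective $P_Y$; so the $E$-stable class $\Psi(\eta):=[\phi]$ is well-defined.

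It then remains to check that $\Phi$ and $\Psi$ are mutually inverse and that the bijection is natural. The engine for both is the identity $\partial(\phi)=\phi^*\kappa_Y$, which follows from naturality of the connecting map together with $\partial(\id_Y)=\kappa_Y$. For $\Phi\Psi=\id$: with $\phi$ as above, $\phi^*\kappa_Y=\partial(\phi)=\theta$, so $\phi^*_M(\xi)=\phi^*_M(f_*\kappa_Y)=f_*\phi^*\kappa_Y=f_*\theta=\eta_M(\xi)$, using that $f_*$ and $\phi^*$ commute. For $\Psi\Phi=\id$: starting from $[\alpha]$ with $\eta=\alpha^*$, one gets $\theta=\alpha^*\kappa_Y=\partial(\alpha)$, so the recovered $\phi$ satisfies $\partial(\phi-\alpha)=0$, whence $\phi-\alpha$ factors through $P_Y$ and $[\phi]=[\alpha]$. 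Naturality in $X$ and $Y$ follows formally from the naturality of all the connecting homomorphisms and of push/pull of extensions (and independence of the choices of $E$-projective resolutions, which give canonically isomorphic data). The only genuinely delicate point is the de-syzygy step for $\Psi$; once the splitting of $(i_Y)_*\kappa_Y$ is observed, everything else is bookkeeping, and notably none of it requires $E$-injectives or the quasi-Frobenius condition.
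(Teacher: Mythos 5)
Your argument is correct, and it takes a genuinely different route from the one in the paper. The paper follows Margolis: it compares $E$-projective presentations of \emph{both} $X$ and $Y$, proving injectivity by an explicit diagram chase (lifting $f$ to a map of presentations, correcting the lift by a map $\ell$ so that it annihilates $\ker s_X$, and concluding that $f$ factors through $PY$) and proving surjectivity by lifting $\eta(\chi)$ to a morphism of syzygies $h:\ker s_X\rightarrow \ker s_Y$ and extending $h$ to a map of short exact sequences. You instead organize everything around a single $E$-projective presentation $0\rightarrow \Omega Y\rightarrow P_Y\rightarrow Y\rightarrow 0$ of $Y$ and the two long exact sequences it induces, with the identities $\partial(\phi)=\phi^*\kappa_Y$ and $\delta(f)=f_*\kappa_Y$ doing all the work: injectivity collapses to ``$\alpha^*\kappa_Y=0$ implies $\alpha\in\operatorname{im}\bigl(\hom_{\mathcal{C}}(X,P_Y)\rightarrow\hom_{\mathcal{C}}(X,Y)\bigr)$,'' and surjectivity to locating $\theta=\eta_{\Omega Y}(\kappa_Y)$ in $\ker\bigl((i_Y)_*\bigr)=\operatorname{im}(\partial)$. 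This buys a shorter, more conceptual proof that needs no $E$-projective presentation of $X$ and produces the inverse $\Psi$ explicitly; what it costs is a heavier reliance on standard machinery of relative homological algebra that the paper never states explicitly---in particular the first-variable long exact sequence for a sequence in $E$ (which needs the relative horseshoe lemma and hence genuinely uses the hypothesis of enough $E$-projectives), and the identification of both connecting maps with push-forward/pull-back of the characteristic class $\kappa_Y$. These facts are in Mac Lane \cite{MR1344215} and are used implicitly elsewhere in the paper, so this is a matter of citation rather than a gap; your cautionary remark that $\Ext^1_{\mathcal{C}/E}(X,-)$ should \emph{not} be identified with stable maps out of a syzygy is well taken and correctly avoids the one real trap in this argument.
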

\begin{proof}
This proof is a straightforward generalization of Margolis' proof of the Hilton-Rees result, as in Prop.~9 of section 14.1 of \cite{MR738973}.
Write $[X,Y]$ for $\hom_{\mathcal{C}}(X, Y)$ modulo $E$-stable equivalence.
We have the morphism of abelian groups
\[ \alpha: \hom_{\mathcal{C}}(X, Y) \rightarrow {\rm nat}(\Ext^1_{\mathcal{C}/E}(Y, -) , \Ext^1_{\mathcal{C}/E}(X, -)) \]
defined by the functoriality of $\Ext^1_{\mathcal{C}/E}$ in the first variable.
If $f: X\rightarrow Y$ factors through a $E$-projective then clearly $\alpha(f) = 0$, so 
$\alpha$ factors as
\[ \xymatrix{ 
\hom_{\mathcal{C}}(X, Y) \ar[rr]^{\alpha} \ar[rd] & & {\rm nat}(\Ext^1_{\mathcal{C}/E}(Y, -) , \Ext^1_{\mathcal{C}/E}(X, -)) \\
 & [X, Y]\ar[ur]_{\beta} & }\]

We now check that $\beta$ is a bijection. Suppose $\beta(f) = 0$. Then
choose $E$-projective covers $s_X: PX \rightarrow X$ and $s_Y: PY \rightarrow Y$.
We have the commutative diagram with exact rows
\begin{equation}\label{yet another lifting diagram}
\xymatrix{
0 \ar[r] \ar[d] & \ker s_x \ar[d]_{\ker s_f} \ar[r]^{i_X} & PX\ar@{-->}[ld]_{\ell} \ar[r]^{s_X} \ar[d]^{Pf} & X\ar[d]^f \ar[r] & 0\ar[d] \\
0 \ar[r] & \ker s_Y \ar[r]_{i_Y} & PY \ar[r]_{s_Y}  & Y \ar[r] & 0 .}\end{equation}
(The map as in the dotted line has not yet been shown to exist.)
After applying $\hom_{\mathcal{C}}(-,\ker s_Y)$, we have the commutative diagram with exact rows
\[ \xymatrix{
0 & \Ext^1_{\mathcal{C}/E}(X, \ker s_Y) \ar[l] & \hom_{\mathcal{C}}(\ker s_X, \ker s_Y) \ar[l] & \hom_{\mathcal{C}}(PX, \ker s_Y) \ar[l]_{\phi} \\
0\ar[u] & \Ext^1_{\mathcal{C}/E}(Y, \ker s_Y) \ar[u]^0 \ar[l] & \hom_{\mathcal{C}}(\ker s_Y, \ker s_Y) \ar[u]^{\lambda}\ar[l] & \hom_{\mathcal{C}}(PY, \ker s_Y) \ar[l]\ar[u] . }\]
Commutativity of the diagram together with exactness of the rows and triviality of the far left-hand vertical map implies that
the map $\lambda$ factors through the image of $\phi$, i.e., there exists a map $\ell$ as in the dotted line in diagram~\ref{yet another lifting diagram}
making the triangle involving $\ker s_f, i_X,$ and $\ell$ commute. We now replace $Pf$ with $g = Pf - i_Y\circ \ell$ to get the commutative diagram with exact rows
\begin{equation}\label{yet another lifting diagram 2}
\xymatrix{
0 \ar[r] \ar[d] & \ker s_x \ar[d]^{\ker s_f} \ar[r]^{i_X} & PX \ar[r]^{s_X} \ar[d]^{g} & X\ar[d]^f \ar[r] \ar@{-->}[ld]^{\mu} & 0 \\
0 \ar[r] & \ker s_Y \ar[r]_{i_Y} & PY \ar[r]_{s_Y}  & Y \ar[r] & 0 .}\end{equation}
Since $g\circ i_X  =  Pf \circ i_X - i_Y\circ\ell\circ i_X  = 0$, 
there exists a map as in the dotted line in diagram~\ref{yet another lifting diagram 2} to make the triangle involving
$s_X, g$, and $\mu$ commute. Now we have 
\begin{eqnarray*} f \circ s_X & = & s_Y \circ g \\
 & = & s_Y \circ \mu\circ s_X \end{eqnarray*}
and $s_X$ is $E$-epic, hence epic, i.e., right-cancellable, so $f = s_Y\circ \mu$.
So $f$ factors through the $E$-projective $PY$, i.e., $f$ is $E$-stably equivalent to zero. So $\beta$ is one-to-one.

Now choose a natural transformation $\Ext^1_{\mathcal{C}/E}(Y, -)\rightarrow \Ext^1_{\mathcal{C}/E}(X, -)$.
We choose $E$-projective covers $s_X: PX\rightarrow X$ and $s_Y: PY\rightarrow Y$ as above.
Write $\chi\in\Ext^1_{\mathcal{C}/E}(Y, \ker s_Y)$ for the class of the extension 
\[ 0 \rightarrow \ker s_Y\rightarrow PY\rightarrow Y\rightarrow 0.\]
Notice that the natural map $\hom_{\mathcal{C}}(\ker s_X, \ker s_Y) \rightarrow \Ext^1_{\mathcal{C}/E}(X, \ker s_Y)$ 
is surjective since $PX$ is $E$-projective and hence $\Ext^1_{\mathcal{C}/E}(PX, \ker s_Y)\cong 0$.
So we can choose an element $h\in\hom_{\mathcal{C}}(\ker s_X, \ker s_Y)$ whose image in
$\Ext^1_{\mathcal{C}/E}(X, \ker s_Y)$ agrees with the image of $\chi$ under the given map
\[ \Ext^1_{\mathcal{C}/E}(Y, \ker s_Y)\rightarrow \Ext^1_{\mathcal{C}/E}(X, \ker s_Y).\]
The map $\Ext^1_{\mathcal{C}/E}(Y, \ker s_Y)\rightarrow \Ext^1_{\mathcal{C}/E}(Y, PY)$ is automatically zero,
so from the commutative diagram
\[ \xymatrix{ \Ext^1_{\mathcal{C}/E}(Y, \ker s_Y) \ar[r]^0 \ar[d] & \Ext^1_{\mathcal{C}/E}(Y, PY) \ar[d] \\
\Ext^1_{\mathcal{C}/E}(X, \ker s_Y) \ar[r]  & \Ext^1_{\mathcal{C}/E}(X, PY) }\]
we know that the image of $\chi$ in $\Ext^1_{\mathcal{C}/E}(X, PY)$ is zero. Hence also the image of $h$ in
$\Ext^1_{\mathcal{C}/E}(X, PY)$ is zero. Hence in the commutative diagram with exact columns
\[\xymatrix{  & \hom_{\mathcal{C}}(PX, PY) \ar[d] \\
\hom_{\mathcal{C}}(\ker s_X, \ker s_Y) \ar[r] \ar[d] & \hom_{\mathcal{C}}(\ker s_X, PY)\ar[d] \\ 
\Ext^1_{\mathcal{C}/E}(X, \ker s_Y)\ar[r] & \Ext^1_{\mathcal{C}/E}(X, PY) }\]
the image of $h$ in $\hom_{\mathcal{C}}(\ker s_X, PY)$ lifts to an element in 
$\hom_{\mathcal{C}}(PX, PY)$, i.e., we have a commutative diagram
\[\xymatrix{ 0 \ar[r] \ar[d] & \ker s_X \ar[d]^h \ar[r] & PX \ar[r]\ar[d] & X \ar[r]\ar[d] & 0 \\
0 \ar[r]  & \ker s_Y  \ar[r] & PY \ar[r] & Y \ar[r] & 0 }\]
and the map $X\rightarrow Y$ is the desired map inducing the given natural transformation in $\Ext^1_{\mathcal{C}/E}$.
Hence $\beta$ is surjective, hence an isomorphism.
\end{proof}

\begin{lemma}\label{iso in ext1 means iso after direct sum with projectives}
Suppose $\mathcal{C}$ is an abelian category with an allowable class $E$ with sectile epics.
Suppose $\mathcal{C}$ has enough $E$-projectives. Then a map 
$f: X\rightarrow Y$ in $\mathcal{C}$ induces a natural isomorphism
\begin{equation}\label{natural iso in ext1} \Ext^1_{\mathcal{C}/E}(Y, -) \rightarrow \Ext^1_{\mathcal{C}/E}(X, -)\end{equation}
if and only if there exist $E$-projective objects $P,Q$ and an isomorphism
$g: X \oplus P \rightarrow Y\oplus Q$ 
such that the composite
\[ X \stackrel{i}{\longrightarrow} X\oplus P \stackrel{g}{\longrightarrow} Y\oplus Q \stackrel{p}{\longrightarrow} Y\]
is equal to $f$. (Here we write $i$ for inclusion of the first summand and $p$ for projection to the first summand.)
\end{lemma}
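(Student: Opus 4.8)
The plan is to prove the two implications separately, using Lemma~\ref{stable hom and nat trans of ext1} to translate the hypothesis about $\Ext^1_{\mathcal{C}/E}$ into a statement in the $E$-stable homotopy category, and Lemma~\ref{easy lemma on nonvanishing of Ext^1} to recognize $E$-projectivity from vanishing of $\Ext^1_{\mathcal{C}/E}$. Throughout I will use additivity of relative $\Ext^1$ in its first variable; this rests on Lemma~\ref{direct sum of elements of E is in E}, since a direct sum of $E$-projective resolutions is again one, so that for an $E$-projective $P$ one has $\Ext^1_{\mathcal{C}/E}(P,-)\cong 0$ and $\Ext^1_{\mathcal{C}/E}(A\oplus P,-)\cong \Ext^1_{\mathcal{C}/E}(A,-)$.

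The easy direction is the "if." Given $E$-projective $P,Q$ and an isomorphism $g: X\oplus P\rightarrow Y\oplus Q$ with $p\circ g\circ i = f$, I observe that $i: X\rightarrow X\oplus P$ and $p: Y\oplus Q\rightarrow Y$ induce isomorphisms on $\Ext^1_{\mathcal{C}/E}(-,N)$ for every $N$, because $\Ext^1_{\mathcal{C}/E}(P,N)\cong 0\cong\Ext^1_{\mathcal{C}/E}(Q,N)$ and the maps in question are the resulting summand projection and inclusion. Applying the contravariant functor $\Ext^1_{\mathcal{C}/E}(-,N)$ to $f = p\circ g\circ i$ then expresses $\Ext^1_{\mathcal{C}/E}(f,N)$ as a composite of three isomorphisms, and naturality in $N$ produces the natural isomorphism~\ref{natural iso in ext1}.

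For the "only if," the first step is to promote the $\Ext^1$-isomorphism to an $E$-stable isomorphism. The correspondence $f\mapsto \Ext^1_{\mathcal{C}/E}(f,-)$ of Lemma~\ref{stable hom and nat trans of ext1} is induced by functoriality of $\Ext^1_{\mathcal{C}/E}$, so it sends a composite of $E$-stable classes to the composite (in the reverse order) of natural transformations; hence that bijection identifies the $E$-stable homotopy category with a full subcategory of a functor category, under which isomorphisms correspond to isomorphisms. Therefore a representative $f$ of a class inducing~\ref{natural iso in ext1} has an $E$-stable inverse $h: Y\rightarrow X$, meaning $\id_X - hf$ factors through an $E$-projective. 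I then write $\id_X - hf = \beta\alpha$ with $\alpha: X\rightarrow R$ and $\beta: R\rightarrow X$ for some $E$-projective $R$; only this one relation will be needed. Next I form the map $u: X\rightarrow Y\oplus R$ with components $f$ and $\alpha$, and the map $r: Y\oplus R\rightarrow X$ restricting to $h$ on $Y$ and $\beta$ on $R$. The identity $hf + \beta\alpha = \id_X$ shows $r\circ u = \id_X$, so $u$ is a split monomorphism; consequently there is an isomorphism $g_0: X\oplus C\rightarrow Y\oplus R$, where $C = \ker r$, restricting to $u$ on $X$, and by construction the composite of $g_0$ with the summand inclusion of $X$ and the summand projection onto $Y$ is exactly $f$.

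The crux is showing $C$ is $E$-projective. Applying $\Ext^1_{\mathcal{C}/E}(-,N)$ to $g_0$ and using $\Ext^1_{\mathcal{C}/E}(R,N)\cong 0$ yields an isomorphism $\Ext^1_{\mathcal{C}/E}(Y,N)\cong \Ext^1_{\mathcal{C}/E}(X,N)\oplus\Ext^1_{\mathcal{C}/E}(C,N)$ whose component onto $\Ext^1_{\mathcal{C}/E}(X,N)$ is precisely $\Ext^1_{\mathcal{C}/E}(f,N)$, since the associated composite of underlying maps is $f$. As $\Ext^1_{\mathcal{C}/E}(f,N)$ is itself an isomorphism by hypothesis, a one-line diagram chase (an isomorphism whose first component is already an isomorphism forces the second summand to vanish) gives $\Ext^1_{\mathcal{C}/E}(C,N)\cong 0$; since this holds for all $N$, Lemma~\ref{easy lemma on nonvanishing of Ext^1} shows $C$ is $E$-projective. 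Taking the data of the statement to be the $E$-projective $C$, the $E$-projective $R$, and the isomorphism $g_0$ completes the argument. I expect the main obstacle to be the first step of this direction, namely making rigorous that a natural isomorphism of the $\Ext^1_{\mathcal{C}/E}$-functors corresponds to an honest $E$-stable isomorphism; this hinges on the compatibility of the bijection of Lemma~\ref{stable hom and nat trans of ext1} with composition. Once that is in hand, the split-monomorphism construction and the vanishing of $\Ext^1_{\mathcal{C}/E}(C,-)$ are routine.
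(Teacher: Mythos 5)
Your proof is correct, but the harder (``only if'') direction takes a genuinely different route from the paper's. The paper uses the stable inverse to produce an epimorphism: from $\id_Y - f\circ g = s\circ i$ with $s:P\to Y$ an $E$-epimorphism from an $E$-projective, it forms $m = [f\ s]: X\oplus P\to Y$, verifies that $m$ is an $E$-epimorphism (this is where the shearing lemma, Lemma~\ref{split monos are in E}, Lemma~\ref{composite of E-epics is E-epic}, and Heller's theorem earn their keep), and then runs the long exact sequence of the resulting member of $E$ twice --- once to see that $\ker m$ is $E$-projective, and once in degrees $0$--$1$ to see that the sequence splits, yielding $X\oplus P\cong Y\oplus \ker m$. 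You instead use the other relation, $\id_X - hf = \beta\alpha$ through an $E$-projective $R$, to produce a \emph{split monomorphism} $u=(f,\alpha):X\to Y\oplus R$ with retraction $r=[h\ \beta]$; the splitting is then automatic from the idempotent $ur$, giving $Y\oplus R\cong X\oplus\ker r$ with no relative-homological input, and the only remaining work is to identify $\ker r$ as $E$-projective, which you do cleanly from additivity of $\Ext^1_{\mathcal{C}/E}$ in the first variable plus the hypothesis that the first component $\Ext^1_{\mathcal{C}/E}(f,-)$ of the induced isomorphism is itself an isomorphism, followed by Lemma~\ref{easy lemma on nonvanishing of Ext^1}. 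Your route avoids having to show any constructed sequence lies in $E$ and hence bypasses the shearing and composition lemmas entirely (you still need Lemma~\ref{direct sum of elements of E is in E} for additivity of $\Ext^1_{\mathcal{C}/E}$, but so does the paper's easy direction); the paper's route has the mild advantage of exhibiting the isomorphism as the splitting of an explicit $E$-exact sequence $0\to Q\to X\oplus P\to Y\to 0$, which is the form reused later in Prop.~\ref{generalization of oort's result}. Your preliminary step --- that a natural isomorphism of $\Ext^1$-functors forces $f$ to be an $E$-stable equivalence via the composition-compatibility of the Hilton--Rees bijection --- is exactly the same (tacit) step the paper takes when it invokes Lemma~\ref{stable hom and nat trans of ext1}, and your worry there is unfounded: the bijection is induced by functoriality of $\Ext^1_{\mathcal{C}/E}$ in the first variable, so it is a fully faithful contravariant functor on the stable category and reflects isomorphisms.
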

\begin{proof}
If an isomorphism $g$ exists as described then we have natural isomorphisms
\begin{eqnarray*}
\Ext^1_{\mathcal{C}/E}(Y, -) & \cong & \Ext^1_{\mathcal{C}/E}(Y, -) \oplus \Ext^1_{\mathcal{C}/E}(Q ,-) \\
 & \cong & \Ext^1_{\mathcal{C}/E}(Y\oplus Q, -) \\ 
 & \cong & \Ext^1_{\mathcal{C}/E}(X\oplus P, -) \\
 & \cong & \Ext^1_{\mathcal{C}/E}(X, -) \oplus \Ext^1_{\mathcal{C}/E}(P ,-) \\
 & \cong & \Ext^1_{\mathcal{C}/E}(X, -)\end{eqnarray*}
as desired.

For the converse: suppose $f: X\rightarrow Y$ induces the natural isomorphism~\ref{natural iso in ext1}.
Then, by Lemma~\ref{stable hom and nat trans of ext1}, $f$ is an $E$-stable equivalence. So there exists a map
$g: Y \rightarrow X$ such that $\id_X - g\circ f$ and $\id_Y - f\circ g$ each factors through an $E$-projective object.
Suppose $P$ is an $E$-projective object and $i: Y \rightarrow P$ and $s: P\rightarrow Y$ maps in $\mathcal{C}$ such that
$s\circ i = \id_Y - f\circ g$. Since $\mathcal{C}$ has enough $E$-projectives, we can choose $P$ so that $s$ is epic. 
Then we have a short exact sequence in $E$
\begin{equation}\label{constructed ses in E}  0 \rightarrow \ker m \rightarrow X \oplus P \stackrel{m}{\longrightarrow} Y \rightarrow 0 \end{equation}
where $m$ is the map given by the matrix of maps
\[ m =\left[ \begin{array}{ll} f \\ s \end{array}
\right] . \]
That $m$ is an $E$-epimorphism follows from it being the composite
\[ X\oplus P \stackrel{n}{\longrightarrow} X \oplus Y \stackrel{\pi}{\longrightarrow} Y \]
where $n$ is given by the matrix of maps
\[ n = \left[ \begin{array}{ll}
\id_X & f \\
0 & s
\end{array}\right] \]
and $\pi$ is the projection to the second summand; $n$ is an $E$-epimorphism by the dual of Lemma~\ref{shearing E-monics},
$\pi$ is an $E$-epimorphism by Lemma~\ref{split monos are in E}, and $m = \pi\circ n$ is an $E$-epimorphism by
Lemma~\ref{composite of E-epics is E-epic}. 

Since $m$ is an $E$-epimorphism, by definition the short exact sequence~\ref{constructed ses in E} is in $E$. So short exact sequence~\ref{constructed ses in E}
induces a natural long exact sequence in $\Ext_{\mathcal{C}/E}$ for any object $M$ of $\mathcal{C}$:
\begin{equation}\label{yet another LES} \xymatrix{ 
\Ext_{\mathcal{C}/E}^i(X\oplus P, M)  & & \\
 \Ext_{\mathcal{C}/E}^i(Y, M) \ar[u]^{\cong} & \Ext_{\mathcal{C}/E}^{i-1}(\ker m, M)\ar[l] &
\Ext_{\mathcal{C}/E}^{i-1}(X\oplus P, M) \ar[l] \\ & &  \Ext_{\mathcal{C}/E}^{i-1}(Y, M) \ar[u]^{\cong} }\end{equation}
where the maps marked as isomorphisms are isomorphisms for $i\geq 2$ since $X\rightarrow Y$ is an $E$-stable equivalence.
Exactness of long exact sequence~\ref{yet another LES} gives us that $\Ext_{\mathcal{C}/E}^{i-1}(\ker m, M)\cong 0$ for all
$M$ in $\mathcal{C}$ and all $i\geq 2$. So by Lemma~\ref{easy lemma on nonvanishing of Ext^1}, $\ker m$ is an $E$-projective.
Part of the long exact sequence induced in $\Ext_{\mathcal{C}/E}$ by short exact sequence~\ref{constructed ses in E} reads:
\[ \xymatrix{
\Ext^1_{\mathcal{C}/E}(X \oplus P, M)    &
\Ext^1_{\mathcal{C}/E}(Y, M) \ar[l]^{\cong} &  \\
\hom_{\mathcal{C}}(\ker m, M)\ar[ur] &
\hom_{\mathcal{C}}(X \oplus P, M)\ar[l] &
\hom_{\mathcal{C}}(Y, M)\ar[l] \\ & & 0\ar[u],
}\]
i.e., 
\[ 0 \rightarrow \hom_{\mathcal{C}}(Y, M) \rightarrow \hom_{\mathcal{C}}(X\oplus P, M) \rightarrow \hom_{\mathcal{C}}(\ker m, M) \rightarrow 0\]
is exact for every object $M$ in $\mathcal{C}$. Hence short exact sequence~\ref{constructed ses in E} is in fact {\em split}, and
$X\oplus P \cong Y\oplus \ker m$, proving the lemma.
\end{proof}

\begin{lemma} \label{shearing iso}
{\bf (Shearing isomorphism.)} Suppose $X,Y$ are objects in an abelian category $\mathcal{C}$ and $f: X \rightarrow Y$ is a monomorphism.
Then the pushout $Y\coprod_X Y$ is naturally isomorphic to $Y\oplus \coker f$.
\end{lemma}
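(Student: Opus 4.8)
The plan is to present the pushout as a cokernel and then to kill the off-diagonal term with a single automorphism of $Y\oplus Y$---the ``shear'' that names the lemma. Recall that in any abelian category the pushout $Y\coprod_X Y$ of the span $Y\stackrel{f}{\longleftarrow} X\stackrel{f}{\longrightarrow} Y$ is computed as the cokernel of the difference map $d: X\rightarrow Y\oplus Y$ given by the column matrix
\[ d = \left[\begin{array}{c} f \\ -f \end{array}\right]. \]
This holds because $\mathcal{C}$ is abelian, so finite colimits exist and the pushout of a span is the cokernel of the associated difference map into the direct sum. Thus the whole problem reduces to computing $\coker d$, and in particular the pushout automatically exists.

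First I would introduce the shearing automorphism $\phi: Y\oplus Y\rightarrow Y\oplus Y$ given by
\[ \phi = \left[\begin{array}{cc} \id_Y & 0 \\ \id_Y & \id_Y \end{array}\right], \]
which is an isomorphism with inverse $\left[\begin{array}{cc} \id_Y & 0 \\ -\id_Y & \id_Y \end{array}\right]$. A one-line matrix multiplication gives
\[ \phi\circ d = \left[\begin{array}{c} f \\ 0 \end{array}\right], \]
the off-diagonal copy of $f$ being cancelled by the shear. Since $\phi$ is an isomorphism, it carries $\coker d$ isomorphically onto $\coker(\phi\circ d)$.

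It then remains to identify the cokernel of $\left[\begin{array}{c} f \\ 0\end{array}\right]: X\rightarrow Y\oplus Y$. This map is the composite of $f: X\rightarrow Y$ with the split inclusion of $Y$ as the first summand, so its image lies entirely in the first copy of $Y$; consequently its cokernel splits as $\coker f\oplus Y$, the second summand surviving untouched. Reordering the summands and assembling the chain of isomorphisms yields $Y\coprod_X Y\cong Y\oplus\coker f$, as claimed.

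For naturality I would observe that every ingredient---the difference map $d$, the automorphism $\phi$, and the formation of cokernels---is functorial in the arrow $f$; concretely, for any $v: Y\rightarrow Y^{\prime}$ both $\phi\circ(v\oplus v)$ and $(v\oplus v)\circ\phi$ equal $\left[\begin{array}{cc} v & 0 \\ v & v\end{array}\right]$, so the shear commutes with base change and the induced isomorphism on pushouts matches the one on $Y\oplus\coker f$. I do not expect any real obstacle here: the only points that demand care are fixing the sign in $d$ so that the shear exactly cancels the off-diagonal entry, and noticing that the argument never uses that $f$ is monic---the isomorphism in fact holds for an arbitrary morphism $f$, with monicity relevant only to the intended applications.
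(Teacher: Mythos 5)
Your proof is correct and is essentially the paper's own argument: both present $Y\coprod_X Y$ as the cokernel of a map $X\to Y\oplus Y$ with both components $\pm f$ and then apply a shearing automorphism of $Y\oplus Y$ carrying it to $\left[\begin{smallmatrix} f \\ 0 \end{smallmatrix}\right]$, whose cokernel is visibly $\coker f\oplus Y$. The only differences are cosmetic (a sign convention in the difference map and the transpose of the shear matrix), and your closing observations about naturality and about monicity being inessential to the isomorphism are accurate.
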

\begin{proof}
Let $g: Y\oplus Y \rightarrow Y\oplus Y$ be the map given by the (invertible, hence an isomorphism) matrix of maps
\[ g = \left[ \begin{array}{ll}  \id & -\id \\ 0 & \id \end{array} \right] .\]
Then we have the commutative diagram with exact rows
\[\xymatrix{
0 \ar[r]\ar[d] & X\ar[r]^{[ f\ f ]} \ar[d]^{\id} & Y\oplus Y\ar[r]\ar[d]^{g} & Y\coprod_X Y \ar[d] \ar[r] & 0 \ar[d] \\
0 \ar[r] & X \ar[r]^{[f\ 0 ]} & Y\oplus Y \ar[r] & \coker f \oplus Y \ar[r] & 0}\]
and hence the isomorphism $Y\coprod_X Y\stackrel{\cong}{\longrightarrow} \coker f\oplus Y$.
\end{proof}

\begin{lemma}\label{existence of objects of E-proj dim 1}
Suppose $\mathcal{C}$ is an abelian category, $E$ an allowable class in $\mathcal{C}$. 
Suppose $\mathcal{C}$ has enough $E$-projectives. If there exists an object of finite $E$-projective dimension $n\geq 2$ in $\mathcal{C}$,
then there exists an object of $E$-projective dimension $1$ in $\mathcal{C}$.\end{lemma}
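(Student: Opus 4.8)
The plan is to run the classical syzygy argument in the relative setting: from a minimal-length $E$-projective resolution of a dimension-$n$ object I extract its $(n-1)$-st syzygy and show that syzygy has $E$-projective dimension exactly $1$.

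First I would fix an object $X$ of $E$-projective dimension $n$ and an $E$-projective resolution realizing this,
\[ 0 \to P_n \xrightarrow{f_n} P_{n-1} \xrightarrow{f_{n-1}} \cdots \xrightarrow{f_1} P_0 \xrightarrow{f_0} X \to 0, \]
which by Def.~\ref{def of E-proj dim} exists and is $E$-long exact, while no $E$-projective resolution of $X$ of length $n-1$ exists. Let $W = \operatorname{im} f_{n-1} = \ker f_{n-2} \subseteq P_{n-2}$ be the $(n-1)$-st syzygy; the hypothesis $n\geq 2$ guarantees that $P_{n-2}$ and $f_{n-2}$ are present, so that $W$ is a genuine syzygy rather than $X$ itself.

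Next I would exhibit a length-$1$ $E$-projective resolution of $W$. Since the resolution begins $0 \to P_n \xrightarrow{f_n} P_{n-1}$, the map $f_n$ is monic, so $\ker f_{n-1} = \operatorname{im} f_n \cong P_n$; combined with $\operatorname{im} f_{n-1} = W$ this gives the short exact sequence
\[ 0 \to P_n \xrightarrow{f_n} P_{n-1} \to W \to 0. \]
This is exactly the syzygy short exact sequence of the original resolution at stage $n-1$, so it lies in $E$; hence it is an $E$-projective resolution of $W$, and $W$ has $E$-projective dimension $\leq 1$. It then remains to rule out $E$-projective dimension $0$, i.e.\ to show $W$ is not $E$-projective, and this is the only real content and the step where the hypothesis that $X$ has dimension \emph{exactly} $n$ gets used. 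I would argue by contradiction: if $W$ were $E$-projective, splicing it into the truncated complex would give
\[ 0 \to W \hookrightarrow P_{n-2} \xrightarrow{f_{n-2}} \cdots \xrightarrow{f_0} X \to 0, \]
an $E$-projective resolution of $X$ of length $n-1$. The only syzygy short exact sequence differing from those of the original resolution is the top one, $0 \to W \to P_{n-2} \to \coker f_{n-2} \to 0$, which is again a syzygy sequence of the original resolution and hence in $E$; the lower ones are unchanged. Thus the spliced complex is $E$-long exact, contradicting $E$-projective dimension $n$ for $X$. Therefore $W$ is not $E$-projective and has $E$-projective dimension exactly $1$.

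The main obstacle is purely the relative-homological bookkeeping: one must verify that passing to a syzygy, and conversely splicing a projective back in, preserves $E$-long-exactness, i.e.\ that the requisite short exact sequences genuinely lie in $E$. This works because, up to the identification $\ker f_i = \operatorname{im} f_{i+1}$, every short exact sequence one needs is one of the syzygy sequences of the \emph{original} $E$-long exact resolution, and those are in $E$ by hypothesis; no new members of $E$ have to be produced by hand.
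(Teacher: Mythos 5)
Your proof is correct and is essentially the paper's argument: both are the classical syzygy/dimension-shifting argument applied to a length-$n$ resolution realizing the dimension, with all the needed members of $E$ being syzygy sequences of the original resolution. The only difference is packaging --- the paper descends inductively through the syzygies starting from $\coker f_2$ and stops when one becomes $E$-projective, whereas you jump directly to the $(n-1)$-st syzygy $W$ and rule out its $E$-projectivity by the splicing contradiction; the two unwind to the same object and the same verification.
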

\begin{proof}
Let $X$ have $E$-projective dimension $n\geq 2$. Choose an $E$-projective resolution of $X$
\[ 0 \rightarrow P_n \stackrel{f_n}{\longrightarrow} P_{n-1} \stackrel{f_{n-1}}{\longrightarrow} \dots \stackrel{f_2}{\longrightarrow} 
P_1 \stackrel{f_1}{\longrightarrow} P_{0} \stackrel{}{\longrightarrow} X \rightarrow 0.\]
Then 
\begin{equation} \label{E-proj res} 0 \rightarrow P_n \stackrel{f_n}{\longrightarrow} P_{n-1} \stackrel{f_{n-1}}{\longrightarrow} \dots  \stackrel{f_2}{\longrightarrow}
P_2 \stackrel{}{\longrightarrow} \coker f_2  \rightarrow 0\end{equation}
is an $E$-projective resolution of $\coker f_2$. If $\coker f_2$ is $E$-projective, then 
\[ 0\rightarrow \coker f_2 \rightarrow P_0\rightarrow X\rightarrow 0\]
is a length $1$ $E$-projective resolution of $X$, and we are done. So suppose $\coker f_2$ is not $E$-projective. So it is not of $E$-projective dimension zero.
Then diagram~\ref{E-proj res} expresses $\coker f_2$ as having $E$-projective dimension at most $n-1$. 
Now we continue by induction: either $\coker f_3$ is $E$-projective or has $E$-projective dimension at most $n-2$, etc. 
After at most $n$ steps this process terminates with an object of $E$-projective dimension $1$.
\end{proof}

The preceding lemmas suffice for us to prove the following proposition, which is really a negative result: it shows that, if $\mathcal{D}$-indexed
colimits of $E$-projectives are not always $E$-projective, then $\mathcal{D}$-indexed homotopy colimits fail to be unique up to homotopy.
\begin{prop}\label{existence of htpy colimits implies projectivity of colimits of projectives}
Let $\mathcal{D}$ be a small category and let $\mathcal{C}$ be a weak Waldhausen abelian category.  
Choose a class $E$ allowable for $\mathcal{C}$.
Suppose $\mathcal{D}$-indexed homotopy colimits are well-defined in $\mathcal{C}$. Then every $\mathcal{D}$-indexed colimit of $E$-projective objects of $\mathcal{C}$ is
$E$-projective.
\end{prop}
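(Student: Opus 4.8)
The plan is to compare the given diagram against the constant zero diagram, exploiting the fact that an object is $E$-projective precisely when it is $E$-stably equivalent to the zero object.

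First I would record the basic observation that, for any object $W$ of $\mathcal{C}$, the object $W$ is $E$-projective if and only if $W$ is $E$-stably equivalent to $0$. Indeed, if $W$ is $E$-projective then $\id_W$ factors through the $E$-projective object $W$ itself, so $\id_W$ is $E$-stably trivial and $W$ becomes isomorphic to $0$ in the stable category; conversely, if $W$ is isomorphic to $0$ in the stable category then $\id_W$ factors through some $E$-projective, exhibiting $W$ as a retract of an $E$-projective and hence itself $E$-projective. In particular, whenever $W$ is $E$-projective the canonical map $W\rightarrow 0$ is an $E$-stable equivalence, since in the stable category it is the unique map between two objects each isomorphic to $0$, and is therefore the isomorphism.

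Next I would set up the comparison. Let $F:\mathcal{D}\rightarrow\mathcal{C}$ be a homotopy colimit diagram with $F(X)$ being $E$-projective for every object $X$ of $\mathcal{D}$, and let $\underline{0}:\mathcal{D}\rightarrow\mathcal{C}$ denote the constant diagram at the zero object. The diagram $\underline{0}$ is itself a homotopy colimit diagram: every structure map $\underline{0}(f)\colon 0\rightarrow 0$, as well as each map $0\rightarrow\underline{0}(X)=0$, is the identity on $0$, which lies in $cof(\mathcal{C})$ by the lluf axiom for a weak Waldhausen category. The canonical maps $F(X)\rightarrow 0$ then assemble into a map of diagrams $\phi\colon F\rightarrow\underline{0}$, naturality being automatic because every target is $0$, and by the previous paragraph each component $\phi(X)\colon F(X)\rightarrow 0$ lies in $we(\mathcal{C})$.

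Finally I would invoke the hypothesis. Since $\mathcal{D}$-indexed homotopy colimits are well-defined in $\mathcal{C}$ (which in particular guarantees that the relevant colimits exist), the induced map $\colim F\rightarrow\colim\underline{0}$ lies in $we(\mathcal{C})$, i.e.\ is an $E$-stable equivalence. But $\colim\underline{0}=0$, so $\colim F$ is $E$-stably equivalent to $0$, hence $E$-projective by the first paragraph. The argument is short and essentially formal once the comparison diagram is chosen correctly; the only steps needing any care are the characterization of $E$-projectives as exactly the objects $E$-stably equivalent to $0$, and the routine verification that $\underline{0}$ really is a homotopy colimit diagram and that $\phi$ really is a map of such diagrams with components in $we(\mathcal{C})$. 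I do not expect a genuine obstacle here, since all of the substance has been pushed into the earlier lemmas.
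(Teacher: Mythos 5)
Your proposal is correct and follows the same basic strategy as the paper: compare $F$ with the constant zero diagram $\underline{0}$, observe that each $F(X)\rightarrow 0$ is a weak equivalence because $F(X)$ is $E$-projective, and invoke well-definedness to conclude that $\colim F\rightarrow 0$ is a weak equivalence. The one place you diverge is the final translation of ``$\colim F$ is $E$-stably equivalent to $0$'' into ``$\colim F$ is $E$-projective'': the paper runs this through $\Ext^1_{\mathcal{C}/E}$, using Lemma~\ref{easy lemma on nonvanishing of Ext^1} and the relative Hilton--Rees Lemma~\ref{stable hom and nat trans of ext1} (and phrases the whole argument as a contrapositive), whereas you use the elementary characterization that an object is $E$-projective if and only if its identity map is $E$-stably null, via the retract-of-a-relative-projective argument. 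Your route is arguably preferable here: the two lemmas the paper cites both carry the hypothesis that $\mathcal{C}$ has enough $E$-projectives, which does not appear in the statement of the proposition, while your retract argument needs no such assumption. Both arguments are otherwise formally identical.
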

\begin{proof}
We work by contrapositive. Suppose there exists a homotopy colimit diagram $F:\mathcal{D}\rightarrow \mathcal{C}$ with the property that
$F(X)$ is $E$-projective for every object $X$ of $\mathcal{D}$, and $\colim F$ is not $E$-projective.
Let $G: \mathcal{D}\rightarrow \mathcal{C}$ be the zero diagram, i.e., $G(X) = 0$ for all objects $X$ of $\mathcal{D}$.
Then the unique map $\phi: F\rightarrow G$ has the property that $\phi(X)$ is an $E$-stable equivalence for every object $X$ of $\mathcal{D}$,
since any map with $E$-projective domain and $E$-projective codomain is trivially an $E$-stable equivalence.
But $\colim F$ is nontrivial, so $\Ext^1_{\mathcal{C}/E}(\colim F, Y)$ is nontrivial for some object $Y$ of $\mathcal{C}$, 
by Lemma~\ref{easy lemma on nonvanishing of Ext^1}.
So the natural transformation $0 \cong \Ext^1_{\mathcal{C}/E}(\colim G, Y) \rightarrow \Ext^1_{\mathcal{C}/E}(\colim F, Y)$ is not
an isomorphism. So by Lemma~\ref{stable hom and nat trans of ext1}, the map $\colim F\rightarrow \colim G \cong 0$ is not
an $E$-stable equivalence.
\end{proof}

\begin{corollary} \label{main nonexistence corollary on htpy pushouts}
Let $\mathcal{C}$ be a weak Waldhausen abelian category. Let $E$ be a class allowable for $\mathcal{C}$.
Suppose $\mathcal{C}$ has enough $E$-projectives. 
Then homotopy pushouts, and in particular homotopy cofibers, are well-defined in $\mathcal{C}$ only if every object in $\mathcal{C}$
has $E$-projective dimension $0$ or $\infty$.
\end{corollary}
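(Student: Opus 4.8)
The plan is to argue by contrapositive, using Prop.~\ref{existence of htpy colimits implies projectivity of colimits of projectives} with $\mathcal{D}$ taken to be the ``span'' category $\bullet \leftarrow \bullet \rightarrow \bullet$ that indexes pushouts: if I can exhibit a single $\mathcal{D}$-indexed homotopy colimit diagram of $E$-projectives whose colimit is \emph{not} $E$-projective, then that proposition forces homotopy pushouts to fail to be well-defined. So assume $\mathcal{C}$ contains an object of $E$-projective dimension $n$ with $0 < n < \infty$. The first move is to reduce to dimension exactly $1$: if $n \geq 2$, Lemma~\ref{existence of objects of E-proj dim 1} produces an object of $E$-projective dimension $1$, and if $n = 1$ there is nothing to do. Thus in either case I may fix an object $X$ of $E$-projective dimension $1$, together with an $E$-projective resolution $0 \rightarrow P_1 \stackrel{f}{\longrightarrow} P_0 \rightarrow X \rightarrow 0$ whose defining short exact sequence lies in $E$, where $P_0, P_1$ are $E$-projective and $X$ (having dimension $1$, not $0$) is not $E$-projective.

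The key construction is then to feed this single $E$-monic $f$ into a symmetric span. I would take $F\colon \mathcal{D} \rightarrow \mathcal{C}$ to be the diagram $P_0 \stackrel{f}{\longleftarrow} P_1 \stackrel{f}{\longrightarrow} P_0$, so that \emph{both} legs of the span are $f$. Every value of $F$ is $E$-projective; the two non-trivial maps are $E$-monomorphisms since $f$ is the allowable monic of the chosen resolution; and the structural maps $0 \rightarrow P_0$, $0 \rightarrow P_1$ are split monomorphisms and hence $E$-monomorphisms (Lemma~\ref{split monos are in E}), so $F$ is a bona fide homotopy colimit diagram in the sense of Def.~\ref{def of homotopy colimit diagram}. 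Its colimit is the pushout $P_0 \coprod_{P_1} P_0$, and applying the Shearing Isomorphism (Lemma~\ref{shearing iso}) to the monomorphism $f$ identifies this pushout with $P_0 \oplus \coker f = P_0 \oplus X$.

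It then remains to confirm that $P_0 \oplus X$ is not $E$-projective, which detects the failure. Since $X$ is not $E$-projective, Lemma~\ref{easy lemma on nonvanishing of Ext^1} supplies an object $Y$ with $\Ext^1_{\mathcal{C}/E}(X, Y) \neq 0$; because $P_0$ is $E$-projective, additivity of $\Ext^1_{\mathcal{C}/E}$ in its first variable gives $\Ext^1_{\mathcal{C}/E}(P_0 \oplus X, Y) \cong \Ext^1_{\mathcal{C}/E}(X, Y) \neq 0$, so $P_0 \oplus X$ is not $E$-projective by the same lemma. This produces a $\mathcal{D}$-indexed homotopy colimit diagram of $E$-projectives whose colimit is not $E$-projective, and the contrapositive of Prop.~\ref{existence of htpy colimits implies projectivity of colimits of projectives} yields the corollary. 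I expect no serious difficulty here: the lemmas align exactly, and the only genuine point requiring attention is the conceptual one of choosing both span legs equal to $f$ so that shearing yields precisely $P_0 \oplus \coker f$; the single technical wrinkle is the verification that the maps $0 \rightarrow P_i$ are cofibrations, which is harmless since any split monomorphism lies in an allowable class.
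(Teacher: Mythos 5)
Your proof is correct and follows essentially the same route as the paper: both reduce to an object $X$ of $E$-projective dimension $1$ via Lemma~\ref{existence of objects of E-proj dim 1}, form the span $P_0 \stackrel{f}{\longleftarrow} P_1 \stackrel{f}{\longrightarrow} P_0$ from the length-one resolution, identify its pushout as $X\oplus P_0$ via the shearing isomorphism of Lemma~\ref{shearing iso}, and detect the failure through the nonvanishing of $\Ext^1_{\mathcal{C}/E}(X,-)$. The only (harmless) difference is in the packaging of the last step: you invoke Prop.~\ref{existence of htpy colimits implies projectivity of colimits of projectives}, which compares against the zero diagram, whereas the paper builds an explicit levelwise weak equivalence from the constant-$P_1$ span and applies Lemma~\ref{stable hom and nat trans of ext1} directly; also note that your appeal to Lemma~\ref{split monos are in E} for the cofibrancy of $0\rightarrow P_i$ is unnecessary (and would import a sectile-epics hypothesis), since any short exact sequence containing a zero object lies in $E$ by the definition of an allowable class.
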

\begin{proof}
Suppose an object in $\mathcal{C}$ has finite $E$-projective dimension $n>0$. Then, by Lemma~\ref{existence of objects of E-proj dim 1}, there exists
some object $X$ in $\mathcal{C}$ of $E$-projective dimension $1$. 
Choose an $E$-projective resolution 
\[ 0\rightarrow P_1 \stackrel{s}{\longrightarrow} P_0\rightarrow X\rightarrow 0\]
of $X$. Then we have the commutative diagram
\[ \xymatrix{
 P_1 \ar[d]^{s} & P_1\ar[l]^{\id} \ar[d]^{\id}\ar[r]^{\id} & P_1\ar[d]^{s} \\
 P_0  & P_1\ar[l]^{s} \ar[r]^{s} & P_0  }\]
in which all vertical arrows are $E$-stable equivalences (since any map between two $E$-projective objects is an $E$-stable equivalence!)
and all horizontal arrows are $E$-monomorphisms.
We compute the induced map on pushouts by the commutative diagram with exact rows 
\[ \xymatrix{  0 \ar[r] & P_1\ar[r]^{\Delta}\ar[d]^{\id} & P_1\oplus P_1\ar[r]\ar[d]^{s \oplus s} &  P_1 \ar[d] \ar[r]  & 0 \\
 0 \ar[r] & P_1\ar[r]^{} & P_0\oplus P_0\ar[r] & P_0\coprod_{P_1} P_0  \ar[r]  & 0 .}\]
Exactness of the top row, as well as an isomorphism
$P_0\coprod_{P_1}P_0\cong X\oplus P_0$, both follow from Lemma~\ref{shearing iso}.
So the pushout map $P_1\coprod_{P_1} P_1\rightarrow P_0\coprod_{P_1}P_0$
is, up to isomorphism, the map $P_1\rightarrow X\oplus P_0$. Applying $\Ext^1_{\mathcal{C}/E}(-, M)$ to this map, we get
\[ \xymatrix{ 
\Ext^1_{\mathcal{C}/E}( P_0\coprod_{P_1}P_0 , M) \ar[r]\ar[d]^{\cong} & \Ext^1_{\mathcal{C}/E}( P_1\coprod_{P_1}P_1 , M) \ar[d]^{\cong} \\
\Ext^1_{\mathcal{C}/E}( X\oplus P_0 , M) \ar[r]\ar[d]^{\cong} & \Ext^1_{\mathcal{C}/E}( P_1 , M) \ar[d]^{\cong} \\
\Ext^1_{\mathcal{C}/E}( X , M) \ar[r] & 0 }\]
for all objects $M$ of $\mathcal{C}$. Since $X$ is assumed to be of $E$-projective dimension $1$, it is not $E$-projective, so by 
Lemma~\ref{easy lemma on nonvanishing of Ext^1},
$\Ext^1_{\mathcal{C}/E}( X , M)$ is nonzero for some object $M$ in $\mathcal{C}$.
So by Lemma~\ref{stable hom and nat trans of ext1}, the pushout map $P_1\coprod_{P_1} P_1\rightarrow P_0\coprod_{P_1}P_0$
is not an $E$-stable equivalence.
\end{proof}

\begin{corollary}\label{nonexistence result on weq2}
Let $\mathcal{C}$ be a weak Waldhausen abelian category. Let $E$ be a class allowable for $\mathcal{C}$.
Suppose $\mathcal{C}$ has enough $E$-projectives. 
Then $\mathcal{C}$ is not a Waldhausen category unless 
each object in $\mathcal{C}$
has $E$-projective dimension either $0$ or $\infty$.\end{corollary}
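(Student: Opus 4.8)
The plan is to argue by contraposition, recycling the explicit diagram constructed in the proof of Cor.~\ref{main nonexistence corollary on htpy pushouts}. The only genuinely new ingredient is the observation that every Waldhausen category, by definition, satisfies Waldhausen's axiom {\bf Weq 2}. Hence it suffices to exhibit, under the assumption that some object of $\mathcal{C}$ has $E$-projective dimension strictly between $0$ and $\infty$, a single diagram of the shape appearing in {\bf Weq 2} whose induced map of pushouts fails to lie in $we(\mathcal{C})$.

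First I would suppose that some object of $\mathcal{C}$ has finite $E$-projective dimension $n>0$. By Lemma~\ref{existence of objects of E-proj dim 1} this yields an object $X$ of $E$-projective dimension exactly $1$, hence an $E$-projective resolution $0\rightarrow P_1 \stackrel{s}{\longrightarrow} P_0 \rightarrow X \rightarrow 0$ with $s$ an $E$-monomorphism. I would then reproduce the commutative diagram from the proof of Cor.~\ref{main nonexistence corollary on htpy pushouts}:
\[ \xymatrix{
 P_1 \ar[d]^{s} & P_1\ar[l]^{\id} \ar[d]^{\id}\ar[r]^{\id} & P_1\ar[d]^{s} \\
 P_0  & P_1\ar[l]^{s} \ar[r]^{s} & P_0 }\]

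Next I would verify that this diagram meets the hypotheses of {\bf Weq 2}. All three vertical maps ($s$, $\id_{P_1}$, $s$) are $E$-stable equivalences, since any morphism between $E$-projective objects is automatically an $E$-stable equivalence, so they lie in $we(\mathcal{C})$. Every horizontal map is an $E$-monomorphism, so in particular the two left-pointing maps $\id_{P_1}\colon P_1 \rightarrow P_1$ and $s\colon P_1 \rightarrow P_0$ are cofibrations, exactly as {\bf Weq 2} demands. The proof of Cor.~\ref{main nonexistence corollary on htpy pushouts} already identifies the induced map of pushouts $P_1\coprod_{P_1}P_1 \rightarrow P_0\coprod_{P_1}P_0$ with the map $P_1 \rightarrow X\oplus P_0$ up to isomorphism, and shows, via Lemmas~\ref{easy lemma on nonvanishing of Ext^1} and~\ref{stable hom and nat trans of ext1}, that it is not an $E$-stable equivalence.

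This single diagram therefore violates {\bf Weq 2}. Since a Waldhausen category must satisfy {\bf Weq 2}, the existence of an object of finite positive $E$-projective dimension forces $\mathcal{C}$ not to be a Waldhausen category; contrapositively, if $\mathcal{C}$ is a Waldhausen category then every object has $E$-projective dimension $0$ or $\infty$. I do not anticipate any real obstacle: the substantive analysis was carried out in the preceding corollary, and the only point requiring care is the bookkeeping of matching the constructed diagram to the orientation and cofibration conventions of {\bf Weq 2}---specifically, checking that the designated maps $Y'\rightarrow X'$ and $Y\rightarrow X$ are among the $E$-monomorphisms, which is immediate because all horizontal maps in the diagram are $E$-monic.
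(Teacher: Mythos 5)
Your proposal is correct and follows essentially the same route as the paper: the paper's own proof is a one-line observation that failure of well-definedness of homotopy pushouts (established via the explicit diagram in Cor.~\ref{main nonexistence corollary on htpy pushouts}) is precisely a failure of Waldhausen's axiom \textbf{Weq 2}, which any Waldhausen category must satisfy. Your version simply unpacks that citation by re-exhibiting the diagram and checking the \textbf{Weq 2} hypotheses explicitly, which is a harmless (indeed slightly more careful) elaboration of the same argument.
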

\begin{proof}
Well-definedness of homotopy colimits is Waldhausen's axiom {\bf Weq 2} in the 
definition of a Waldhausen category, from \cite{MR802796}.
\end{proof}

\section{Positive results on homotopy pushouts and homotopy cofibers.}

In this section we prove that homotopy pushouts in a weak Waldhausen abelian category $\mathcal{C}$ are well-defined if one makes some mild assumptions on 
$\mathcal{C}$, as well as one quite significant assumption on $\mathcal{C}$: that every relatively projective object is relatively injective. In the absolute case,
i.e., the case where $E$ is the class of all short exact sequences in $\mathcal{C}$, this condition is somewhat weaker than the assumption that
$\mathcal{C}$ be quasi-Frobenius, which holds when $\mathcal{C}$ is the category of modules over any quasi-Frobenius ring 
(e.g. connected co-commutative finite-dimensional Hopf algebras over fields, such as finite-dimensional sub-Hopf-algebras of the Steenrod algebra). Recall that an abelian category
$\mathcal{C}$ is said to be ``quasi-Frobenius'' if projective and injective objects coincide in $\mathcal{C}$.

The following proposition is the relative-homological-algebraic generalization of the main result of Oort's paper \cite{MR0146233}. The proof follows Oort's
but with some adaptations (which are easy) to the relative situation.
\begin{prop}\label{generalization of oort's result}
Let $\mathcal{C}$ be an abelian category, $E$ an allowable class in $\mathcal{C}$ with sectile epics. Suppose $\mathcal{C}$ has enough $E$-projectives.
Then a map $f:X\rightarrow Y$ in $\mathcal{C}$ induces a natural isomorphism
\[ \Ext^i_{\mathcal{C}/E}(Y, -) \rightarrow \Ext^i_{\mathcal{C}/E}(X, -)\]
if and only if there exists a short exact sequence in $E$
\begin{equation} \label{oort ses} 0 \rightarrow Q \rightarrow P\oplus X \stackrel{p}{\longrightarrow} Y \rightarrow 0\end{equation}
with $P$ an $E$-projective and $Q$ of $E$-projective dimension $\leq i-1$,
such that the composite $X\stackrel{i}{\longrightarrow} P\oplus X \stackrel{p}{\longrightarrow} Y$ is equal to $f$.
(Here $i$ is inclusion as the second summand.)
\end{prop}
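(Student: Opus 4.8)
The plan is to read everything off the long exact sequence in relative $\Ext$ associated to the short exact sequence~\ref{oort ses}, exactly as in Oort, using two vanishing inputs: since $P$ is $E$-projective, $\Ext^j_{\mathcal{C}/E}(P,-)=0$ for $j\geq 1$, and since $Q$ has $E$-projective dimension $\leq i-1$, $\Ext^j_{\mathcal{C}/E}(Q,-)=0$ for $j\geq i$. The case $i=1$ is essentially the relative Hilton--Rees statement already proved in Lemma~\ref{iso in ext1 means iso after direct sum with projectives} (there a sequence of the form~\ref{oort ses} with $Q$ an $E$-projective is exactly the splitting recorded by the isomorphism $X\oplus P\cong Y\oplus Q$), so one option is to induct on $i$ by dimension-shifting against an $E$-projective resolution of $Y$; but both directions can be run directly off the long exact sequence, which is how I would organize the write-up.

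For the \emph{if} direction I would apply $\Ext_{\mathcal{C}/E}(-,M)$ to~\ref{oort ses}. Because $P$ is $E$-projective, the inclusion and projection between $X$ and $P\oplus X$ identify $\Ext^j_{\mathcal{C}/E}(P\oplus X,M)$ with $\Ext^j_{\mathcal{C}/E}(X,M)$ for $j\geq 1$ and carry the map induced by $p$ to the map induced by $f$ (since $f$ is $p$ precomposed with the inclusion). I would then chase the exact strand $\Ext^{j-1}_{\mathcal{C}/E}(Q,M)\to\Ext^{j}_{\mathcal{C}/E}(Y,M)\to\Ext^{j}_{\mathcal{C}/E}(X,M)\to\Ext^{j}_{\mathcal{C}/E}(Q,M)$: for $j\geq i+1$ both outer terms vanish and the middle map is an isomorphism, while at the boundary degree $j=i$ the term $\Ext^{i}_{\mathcal{C}/E}(Q,M)$ still vanishes, so the map induced by $f$ stays surjective and its injectivity is governed entirely by the connecting homomorphism out of $\Ext^{i-1}_{\mathcal{C}/E}(Q,M)$. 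Pinning down this connecting map is the delicate point of the whole argument.

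For the \emph{only if} direction I would manufacture the sequence~\ref{oort ses} by hand. Using that $\mathcal{C}$ has enough $E$-projectives, choose an $E$-epimorphism $p_0\colon P\to Y$ and form the map $m=[\,p_0\ \ f\,]\colon P\oplus X\to Y$; since $p_0$ already lifts every map out of an $E$-projective, so does $m$, hence $m$ is an $E$-epimorphism by Heller's theorem~\ref{heller's theorem}, and its kernel $Q$ fits into~\ref{oort ses} with the composite $X\to P\oplus X\xrightarrow{p} Y$ equal to $f$ by construction. It then remains to bound the $E$-projective dimension of $Q$: feeding the hypothesis on the $\Ext^i$-isomorphism induced by $f$ into the long exact sequence forces $\Ext^{j}_{\mathcal{C}/E}(Q,M)=0$ for all $M$ and all $j\geq i$, and the relative analog of the standard criterion (vanishing of $\Ext^{\geq i}_{\mathcal{C}/E}$ implies $E$-projective dimension $\leq i-1$), proved by relative dimension-shifting together with Lemma~\ref{easy lemma on nonvanishing of Ext^1}, then gives $Q$ of $E$-projective dimension $\leq i-1$.

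The main obstacle, as flagged above, is the boundary degree $j=i$: one must control the connecting homomorphism $\Ext^{i-1}_{\mathcal{C}/E}(Q,-)\to\Ext^{i}_{\mathcal{C}/E}(Y,-)$ precisely, and this bookkeeping is exactly what makes Oort's argument more than a one-line diagram chase. The remaining work is routine but genuinely relative-homological: one must verify that $m$ is an honest $E$-epimorphism and that the identification of the map induced by $f$ with the map induced by $p$ lives inside $E$-relative $\Ext$, which rests on the earlier results on split and composite $E$-epimorphisms (Lemmas~\ref{split monos are in E} and~\ref{composite of E-epics is E-epic}) and on the standing hypotheses that $E$ has sectile epics and that $\mathcal{C}$ has enough $E$-projectives, so that Heller's theorem and relative dimension-shifting are available throughout.
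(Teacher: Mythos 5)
Your ``only if'' direction is genuinely different from the paper's, but as written it has a gap at its central step. You form $Q=\ker\bigl(\left[\begin{smallmatrix}p_0 & f\end{smallmatrix}\right]\colon P\oplus X\to Y\bigr)$ (and your verification via Heller's theorem~\ref{heller's theorem} that this map is an $E$-epimorphism is correct, and cleaner than the paper's corresponding verification), and you then want to read off $\Ext^j_{\mathcal{C}/E}(Q,M)=0$ for all $j\geq i$ from the long exact sequence. But that vanishing in degree $j$ requires $f$ to induce an epimorphism on $\Ext^j_{\mathcal{C}/E}$ and a monomorphism on $\Ext^{j+1}_{\mathcal{C}/E}$ for \emph{every} $j\geq i$, whereas your hypothesis is an isomorphism in degree $i$ only. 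The missing implication (an isomorphism on $\Ext^i$ forces isomorphisms on all $\Ext^j$ for $j\geq i$) is true but is not free when one only has enough $E$-projectives: one must lift $f$ to a map of $(i-1)$-st syzygies, observe that the lifted map induces an isomorphism on $\Ext^1_{\mathcal{C}/E}$, invoke the relative Hilton--Rees Lemma~\ref{iso in ext1 means iso after direct sum with projectives} to conclude it becomes an isomorphism after adding $E$-projective summands, and shift back. That is precisely the engine of the paper's proof, which then totalizes the resulting map of truncated resolutions (a mapping cone) to manufacture the sequence~\ref{oort ses} with the bound on the $E$-projective dimension of the kernel visible by construction. Your kernel construction is a perfectly reasonable alternative ending once that step is supplied, but it cannot be omitted.

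Your ``if'' direction is not completed: you correctly reduce injectivity of $\Ext^i_{\mathcal{C}/E}(Y,M)\to\Ext^i_{\mathcal{C}/E}(X,M)$ to the vanishing of the connecting map $\Ext^{i-1}_{\mathcal{C}/E}(Q,M)\to\Ext^i_{\mathcal{C}/E}(Y,M)$, and you correctly note that the hypothesis on $Q$ yields surjectivity in degree $i$ and isomorphisms in degrees $>i$, but you then declare the connecting map ``the delicate point'' and stop. That is a genuine omission, and the obstruction you isolated is real: $E$-projective dimension $\leq i-1$ does not make $\Ext^{i-1}_{\mathcal{C}/E}(Q,M)$ vanish (for $i=1$ it would assert $\hom_{\mathcal{C}}(Q,M)=0$ for $Q$ an $E$-projective), and this vanishing is exactly what the paper's own argument invokes at the corresponding point. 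Relatedly, your remark that for $i=1$ a sequence~\ref{oort ses} with $Q$ projective ``is exactly'' the split situation of Lemma~\ref{iso in ext1 means iso after direct sum with projectives} conflates a short exact sequence with a split one. So this half needs either an additional argument controlling the connecting map or a weakening of what is claimed in degree $i$ (e.g.\ surjectivity there, with isomorphisms only in degrees above $i$).
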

\begin{proof}
If short exact sequence~\ref{oort ses} exists with the described properties, then for any object $M$ of $\mathcal{C}$, the induced long exact sequence
in $\Ext_{\mathcal{C}/E}$ reads
\[ \xymatrix{ 
0\cong\Ext^i_{\mathcal{C}/E}(Q, M) & \Ext^i_{\mathcal{C}/E}(P\oplus X, M) \ar[l] & \Ext^i_{\mathcal{C}/E}(Y, M) \ar[l] & \Ext^{i-1}_{\mathcal{C}/E}(Q, M) \cong 0\ar[l] ,
}\]
and since $P$ is projective we now have 
\[ \Ext^i_{\mathcal{C}/E}(Y, M) \cong \Ext^i_{\mathcal{C}/E}(P\oplus X, M)\cong \Ext^i_{\mathcal{C}/E}(X, M).\]

Now instead assume that $f$ induces an isomorphism in $\Ext^i_{\mathcal{C}/E}$. We want to construct a short exact sequence~\ref{oort ses} with the described
properties. Choose exact sequences for $X,Y$
\begin{equation}\xymatrix{
0 \ar[r]\ar[d] & N \ar[d]^{\theta_{i-1}} \ar[r]^{d^Q_{i-1}} & Q_{i-2} \ar[d]^{\theta_{i-2}}\ar[r]^{d^Q_{i-2}} & \dots \ar[r]^{d^Q_2}  & Q_1 \ar[r]^{d^Q_1}\ar[d]^{\theta_1} & Q_0 \ar[r]^{d^Q_0}\ar[d]^{\theta_0} & X \ar[d]^f \ar[r] & 0 \ar[d] \\
0 \ar[r] & M  \ar[r]^{d^P_{i-1}} & P_{i-2} \ar[r]^{d^P_{i-2}} & \dots \ar[r]^{d^P_2}  & P_1 \ar[r]^{d^P_1} & P_0^{d^P_0} \ar[r] & Y  \ar[r] & 0  
}\end{equation}
with each $Q_j$ and $P_j$ an $E$-projective, with each short exact sequence
\[ 0 \rightarrow \im d^Q_{j+1} \rightarrow Q_{j} \rightarrow \im d^Q_j \rightarrow 0\]
and
\[ 0 \rightarrow \im d^P_{j+1} \rightarrow P_{j} \rightarrow \im d^P_j \rightarrow 0\]
both in $E$, and 
such that each $\theta_j$ factors through the kernel of the map $Q_{j-1}\rightarrow Q_{j-2}$ for $j\geq 1$, and $\theta_0$ factors through the kernel
of the map $Q_0\rightarrow X$. Then $\theta_{n-1}$ induces a natural isomorphism
\[ \Ext^1_{\mathcal{C}/E}(N, -) \cong \Ext^i_{\mathcal{C}/E}(X, -) \cong \Ext^i_{\mathcal{C}/E}(Y, -) \cong \Ext^1_{\mathcal{C}/E}(M, -) \]
and hence, by Lemma~\ref{iso in ext1 means iso after direct sum with projectives}, there exist $E$-projectives $Q,P$ and an isomorphism $N\oplus Q\cong M\oplus P$ extending $\theta_{n-1}$.
We take a direct sum with these $E$-projectives to get the commutative diagram with exact rows
\begin{equation}\xymatrix{
0 \rightarrow N\oplus Q \ar[d]^{\cong} \ar[r]^{} & Q_{i-2}\oplus Q \ar[d]^{}\ar[r]^{} & 
Q_{i-3} \ar[r] \ar[d] & Q_{i-4} \ar[r]\ar[d] & \dots \\
0 \rightarrow M\oplus P  \ar[r]^{} & P_{i-2}\oplus P \ar[r]^{} &
P_{i-3} \ar[r] & P_{i-4} \ar[r] & \dots \\
}\end{equation}
and, regarding this as a double complex and totalizing, we get an exact sequence
\[ 0 \rightarrow Q_{i-2}\oplus Q \rightarrow Q_{i-3}\oplus P_{i-2}\oplus P \rightarrow Q_{i-2}\oplus P_{i-3}\rightarrow Q_{i-4}\oplus P_{i-3}\rightarrow \dots
\rightarrow X\oplus P_0\stackrel{p}{\longrightarrow} Y \rightarrow 0\]
which expresses that we have a short exact sequence
\begin{equation}\label{putative oort ses} 0 \rightarrow \ker p \rightarrow X\oplus P_0 \stackrel{p}{\longrightarrow} Y\rightarrow 0\end{equation}
with $P_0$ $E$-projective and $\ker p$ of $E$-projective dimension $\leq i-1$, and with $p$ extending $f$ as desired.
All that remains is to check that short exact sequence~\ref{putative oort ses} is in $E$. This follows from $p$ being a difference of the map
$f: X\rightarrow Y$ and the $E$-projective cover $d^P_{0}: P_0 \rightarrow Y$, as follows: by the dual of Lemma~\ref{shearing E-monics},
the map $X\oplus P_0 \rightarrow X \oplus Y$ given by the matrix of maps
\[ \left[ \begin{array}{ll}
\id_X & f \\
0 & p 
\end{array} \right]\]
is an $E$-epimorphism. By Lemma~\ref{split monos are in E}, the projection $X\oplus Y\rightarrow Y$ is an $E$-epimorphism as well.
So the composite map $X\oplus P_0 \rightarrow X\oplus Y \rightarrow Y$, which is equal to $p$, is an $E$-epimorphism by Lemma~\ref{composite of E-epics is E-epic}.
\end{proof}

\begin{lemma}\label{ext2 equiv implies ext1 equiv}
Let $\mathcal{C}$ be an abelian category and let $E$ be an allowable class in $\mathcal{C}$ with sectile epics.
Suppose $\mathcal{C}$ has enough $E$-projectives, and suppose that every $E$-projective object is $E$-injective. 
If $X\rightarrow Y$ is a map in $\mathcal{C}$ which induces a natural isomorphism of functors
\begin{equation}\label{natural iso in ext2} \Ext^2_{\mathcal{C}/E}(Y, -) \stackrel{\cong}{\longrightarrow} \Ext^2_{\mathcal{C}/E}(X, -),\end{equation}
then $X\rightarrow Y$ is an $E$-stable equivalence.
\end{lemma}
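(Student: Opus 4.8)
The plan is to dimension-shift the $\Ext^2$-hypothesis down to an $\Ext^1$-statement about a syzygy of $f$, and then to ``de-loop'' using the assumption that $E$-projectives are $E$-injective. The organizing principle is that, by Lemma~\ref{stable hom and nat trans of ext1}, the contravariant assignment $X\mapsto\Ext^1_{\mathcal{C}/E}(X,-)$ is fully faithful on $E$-stable equivalence classes of maps; since fully faithful functors reflect isomorphisms, a map is an $E$-stable equivalence if and only if it induces a natural isomorphism on $\Ext^1_{\mathcal{C}/E}$. So, using that $\mathcal{C}$ has enough $E$-projectives, I would first choose $E$-short exact sequences
\[ 0\to\Omega X\xrightarrow{\alpha}P_X\xrightarrow{\pi_X}X\to 0,\qquad 0\to\Omega Y\xrightarrow{\beta}P_Y\xrightarrow{\pi_Y}Y\to 0 \]
with $P_X,P_Y$ $E$-projective and $\Omega X,\Omega Y$ the respective kernels. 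As $P_X$ is $E$-projective, $f\pi_X$ lifts through $\pi_Y$ to $P_f\colon P_X\to P_Y$, which restricts to some $\Omega f\colon\Omega X\to\Omega Y$, so $P_f\alpha=\beta\,\Omega f$. Because $P_X,P_Y$ are $E$-projective, the long exact $\Ext_{\mathcal{C}/E}$-sequences of these two sequences furnish natural isomorphisms $\Ext^1_{\mathcal{C}/E}(\Omega X,M)\cong\Ext^2_{\mathcal{C}/E}(X,M)$ and $\Ext^1_{\mathcal{C}/E}(\Omega Y,M)\cong\Ext^2_{\mathcal{C}/E}(Y,M)$, compatible with the maps induced by $\Omega f$ and $f$; hence the hypothesis~\ref{natural iso in ext2} says exactly that $\Omega f$ induces a natural isomorphism on $\Ext^1_{\mathcal{C}/E}$. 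By the principle above (equivalently, by Lemma~\ref{iso in ext1 means iso after direct sum with projectives}), $\Omega f$ is an $E$-stable equivalence, and it remains to conclude the same for $f$.

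This de-looping is the heart of the argument, and the only place the hypothesis that every $E$-projective is $E$-injective is used. Since $\Omega f$ is an $E$-stable equivalence, choose $g_0\colon\Omega Y\to\Omega X$ with $\id_{\Omega X}-g_0\Omega f$ and $\id_{\Omega Y}-\Omega f g_0$ factoring through $E$-projectives. Because $P_X$ is $E$-projective and therefore $E$-injective, the map $\alpha g_0\colon\Omega Y\to P_X$ extends along the $E$-monomorphism $\beta$ to a map $g_1\colon P_Y\to P_X$ with $g_1\beta=\alpha g_0$. This $g_1$ carries $\ker\pi_Y$ into $\ker\pi_X$ and so induces a map $\bar g\colon Y\to X$ on cokernels, the candidate stable inverse to $f$.

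The main obstacle is then verifying $\bar g f=\id_X$ and $f\bar g=\id_Y$ in the stable category. The endomorphism $g_1P_f-\id_{P_X}$ of $P_X$ induces $\bar g f-\id_X$ on cokernels; precomposing with $\alpha$ and using $g_1\beta=\alpha g_0$ together with $P_f\alpha=\beta\,\Omega f$ rewrites it as $\alpha\,(g_0\Omega f-\id_{\Omega X})$, which factors as $\Omega X\xrightarrow{u}R\xrightarrow{v}\Omega X$ through an $E$-projective $R$. Here the injectivity hypothesis is invoked a second time: since $R$ is $E$-injective and $\alpha$ is an $E$-monomorphism, $u$ extends to $\tilde u\colon P_X\to R$, and consequently $g_1P_f-\id_{P_X}-\alpha v\tilde u$ vanishes on $\Omega X$, hence factors through $\coker\alpha=X$. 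Tracking the induced maps on cokernels then shows that $\bar g f-\id_X$ factors through the $E$-projective $P_X$, so $\bar g f=\id_X$ stably; the symmetric computation built from $\id_{\Omega Y}-\Omega f g_0$ gives $f\bar g=\id_Y$ stably, whence $f$ is an $E$-stable equivalence. I expect the delicate point to be only the double bookkeeping of these two extensions and of the cokernel maps they induce; everything else is formal relative homological algebra.
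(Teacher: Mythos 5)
Your proof is correct, but it takes a genuinely different route from the paper's. The paper applies Prop.~\ref{generalization of oort's result} with $i=2$ to produce a short exact sequence $0\to Q\to P\oplus X\to Y\to 0$ in $E$ extending $f$ with $P$ and $Q$ both $E$-projective, uses the hypothesis that $E$-projectives are $E$-injective exactly once --- to split that sequence --- and then concludes via Lemma~\ref{iso in ext1 means iso after direct sum with projectives} and Lemma~\ref{stable hom and nat trans of ext1}. You bypass the Oort machinery entirely: you dimension-shift to first syzygies, apply the Hilton--Rees correspondence (Lemma~\ref{stable hom and nat trans of ext1}) to see that $\Omega f$ is an $E$-stable equivalence, and then de-loop by hand, invoking $E$-injectivity of $E$-projectives twice (once to extend $\alpha g_0$ over $\beta$ to $g_1$, once to modify $g_1P_f-\id_{P_X}$ by $\alpha v\tilde u$ so that it kills $\Omega X$). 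I checked the bookkeeping: $(g_1P_f-\id_{P_X})\alpha=\alpha(g_0\Omega f-\id_{\Omega X})$, the corrected map factors as $w\pi_X$ with $w\colon X\to P_X$, and the induced endomorphism of $X=\coker\alpha$ is $\pi_X\circ w$, so $\bar g f-\id_X$ does factor through the $E$-projective $P_X$; the symmetric computation for $f\bar g-\id_Y$ goes through identically. What your route buys is self-containedness --- it needs only Lemma~\ref{stable hom and nat trans of ext1} and the long exact sequence, and never actually uses the sectile-epics hypothesis or Prop.~\ref{generalization of oort's result} --- and it isolates the projective-implies-injective hypothesis as the precise statement that the syzygy construction detects stable equivalences. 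What the paper's route buys is brevity once the Oort proposition is available (it is needed elsewhere anyway) and the explicit splitting $P\oplus X\cong Q\oplus Y$ over $f$. Both are valid proofs of the lemma.
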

\begin{proof}
Let $f: X\rightarrow Y$ be a map which induces a natural isomorphism of functors~\ref{natural iso in ext2}. Then, by 
Prop.~\ref{generalization of oort's result}, there exists a short exact sequence in $E$
\begin{equation}\label{ses to split} 0 \rightarrow Q \rightarrow P\oplus X \rightarrow Y \rightarrow 0\end{equation}
where $P\oplus X \rightarrow Y$ extends $f$ and both $P$ and $Q$ are $E$-projective. Now we use the assumption that
every $E$-projective in $\mathcal{C}$ is $E$-injective: since $Q$ is $E$-injective, short exact sequence~\ref{ses to split} splits,
and we get an isomorphism $P\oplus X \stackrel{\cong}{\longrightarrow} Q\oplus Y$. Now Lemma~\ref{iso in ext1 means iso after direct sum with projectives}
implies that $f$ induces a natural isomorphism in $\Ext^1_{\mathcal{C}/E}$ and hence, by Lemma~\ref{stable hom and nat trans of ext1},
$f$ is an $E$-stable equivalence.
\end{proof}

\begin{lemma}\label{left properness}
Let $\mathcal{C}$ be a weak Waldhausen abelian category.  Let $E$ be a class allowable for $\mathcal{C}$.
Suppose $\mathcal{C}$ has enough $E$-projectives and enough $E$-injectives, 
suppose $E$ has sectile epics and retractile monics, and
suppose every $E$-projective object is $E$-injective. 
Then, for any cofibration $f: X\rightarrow Y$ and any weak equivalence
$g: X \rightarrow Z$ in $\mathcal{C}$, the pushout map $Y \rightarrow Y\coprod_X Z$ is also a weak equivalence.
\end{lemma}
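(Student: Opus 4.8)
The plan is to exhibit $b\colon Y\to W:=Y\coprod_X Z$ as one of the vertical maps in a morphism of short exact sequences in $E$, and then to compare the associated long exact $\Ext_{\mathcal{C}/E}$-sequences. First I would form the pushout, with structure maps $b\colon Y\to W$ and $a\colon Z\to W$. Since $f$ is a monomorphism, the standard abelian-category fact that pushouts of monomorphisms are monomorphisms with unchanged cokernel tells us that $a$ is a monomorphism and that the canonical map $\coker f\to\coker a$ is an isomorphism; write $C$ for the common cokernel. Then $b$ sits in a commutative diagram with exact rows
\[ \xymatrix{
0 \ar[r] & X \ar[r]^f \ar[d]_g & Y \ar[r]^q \ar[d]^b & C \ar[r] \ar@{=}[d] & 0 \\
0 \ar[r] & Z \ar[r]_a & W \ar[r]_r & C \ar[r] & 0,
}\]
whose top row lies in $E$ because the cofibration $f$ is by definition an $E$-monomorphism.

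The first substantive step is to show that the bottom row also lies in $E$; this is where the hypothesis of enough $E$-projectives, through Heller's theorem~\ref{heller's theorem}, does the work. I would check that $r\colon W\to C$ is an $E$-epimorphism by lifting: given an $E$-projective $P$ and a map $P\to C$, it lifts through $q$ (as $q$ is the $E$-epimorphism that is the cokernel of the $E$-monomorphism $f$), and since $r\circ b=q$, composing such a lift with $b$ yields a lift of $P\to C$ through $r$. Thus every map from an $E$-projective to $C$ factors through $r$, so by Heller's theorem $r$ is an $E$-epimorphism and the bottom row is a short exact sequence in $E$.

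With both rows in $E$, applying $\Ext^\bullet_{\mathcal{C}/E}(-,M)$ for a fixed object $M$ gives a morphism of long exact sequences, with vertical maps induced by $g$, $b$, and $\id_C$. Because $g$ is a weak equivalence, i.e. an $E$-stable equivalence, Lemma~\ref{stable hom and nat trans of ext1} together with dimension-shifting shows that $g$ induces isomorphisms $\Ext^n_{\mathcal{C}/E}(Z,M)\to\Ext^n_{\mathcal{C}/E}(X,M)$ for all $n\geq 1$, while the maps induced by $\id_C$ are trivially isomorphisms. A five-lemma argument around the term $\Ext^2_{\mathcal{C}/E}(W,M)$ then forces $b^*\colon\Ext^2_{\mathcal{C}/E}(W,M)\to\Ext^2_{\mathcal{C}/E}(Y,M)$ to be an isomorphism for every $M$ (in fact the same argument gives isomorphisms in every degree $\geq 2$). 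One cannot push the five lemma down to $\Ext^1$, since that would require $g$ to induce an isomorphism on the ordinary $\hom$-groups rather than only on stable-equivalence classes of maps; this is precisely the gap bridged by the hypothesis that $E$-projectives are $E$-injective. Finally, since $b$ induces a natural isomorphism on $\Ext^2_{\mathcal{C}/E}$, Lemma~\ref{ext2 equiv implies ext1 equiv} applies and shows that $b$ is an $E$-stable equivalence, i.e. a weak equivalence, which is the assertion.

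I expect the main obstacle to be the second step, namely that the pushed-out sequence $0\to Z\to W\to C\to 0$ genuinely lies in $E$: allowable classes need not be closed under pushout, so one cannot take this for granted and must instead use the lifting characterization of $E$-epimorphisms from Heller's theorem. The factorization $r\circ b=q$ is what makes this go through, by letting us transport lifts along $q$ to lifts along $r$.
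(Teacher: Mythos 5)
Your proposal is correct and follows essentially the same route as the paper's proof: form the morphism of short exact sequences with common cokernel $\coker f$, apply $\Ext^{\bullet}_{\mathcal{C}/E}(-,M)$, run the five lemma around $\Ext^2$ of the pushout, and finish with Lemma~\ref{ext2 equiv implies ext1 equiv}. The one point where you go beyond the paper is your explicit Heller-theorem verification that the pushed-out sequence $0\to Z\to Y\coprod_X Z\to \coker f\to 0$ lies in $E$; the paper asserts this without comment (it is also deducible from its later Lemma~\ref{when admissible monics are closed under pushout} on pushouts of $E$-monics), and your lifting argument via $r\circ b=q$ is a correct and welcome justification.
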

\begin{proof}
The given maps fit into a commutative diagram with exact rows in $E$
\[ \xymatrix{
0\ar[r]\ar[d] & X \ar[r]^f \ar[d]^g\ & Y \ar[r]\ar[d] & \coker f \ar[r]\ar[d]^{\cong} & 0 \ar[d] \\
0 \ar[r] & Z \ar[r] & Y\coprod_X Z \ar[r] & \coker f\ar[r] & 0 
}\]
and since $g$ is an $E$-stable equivalence, Lemma~\ref{stable hom and nat trans of ext1} gives us, for any object $M$ of $\mathcal{C}$, 
the marked isomorphisms in the commutative diagram
with exact columns
\[\xymatrix{
\Ext^1_{\mathcal{C}/E}(Z, M)\ar[d]\ar[r]^{\cong} &  \Ext^1_{\mathcal{C}/E}(X, M)\ar[d] \\
\Ext^2_{\mathcal{C}/E}(\coker f, M)\ar[d]\ar[r]^{\cong} & \Ext^2_{\mathcal{C}/E}(\coker f, M)\ar[d] \\
\Ext^2_{\mathcal{C}/E}(Y\coprod_X Z, M)\ar[d]\ar[r] & \Ext^2_{\mathcal{C}/E}(Y, M)\ar[d] \\
\Ext^2_{\mathcal{C}/E}(Z, M)\ar[d]\ar[r]^{\cong} & \Ext^2_{\mathcal{C}/E}(X, M)\ar[d] \\
\Ext^3_{\mathcal{C}/E}(\coker f, M)\ar[r]^{\cong} & \Ext^3_{\mathcal{C}/E}(\coker f, M) .}\]
So by the Five Lemma, $Y \rightarrow Y\coprod_X Z$ induces a natural isomorphism of functors
\[ \Ext^2_{\mathcal{C}/E}(Y\coprod_X Z, -) \stackrel{\cong}{\longrightarrow} \Ext^2_{\mathcal{C}/E}(Y, -) \]
and hence, by Lemma~\ref{ext2 equiv implies ext1 equiv}, $Y\rightarrow Y\coprod_X Z$ is an $E$-stable equivalence, hence a weak equivalence.
\end{proof}

\begin{prop}\label{main prop on well-definedness of homotopy pushouts}
Let $\mathcal{C}$ be a weak Waldhausen abelian category.  Let $E$ be a class allowable for $\mathcal{C}$.  
Suppose $\mathcal{C}$ has enough $E$-projectives and enough $E$-injectives, 
suppose $E$ has sectile epics and retractile monics,
and suppose that every $E$-projective object is $E$-injective. Then homotopy pushouts are well-defined in $\mathcal{C}$.
\end{prop}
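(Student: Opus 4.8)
The plan is to specialize the definition of well-definedness to the span-shaped category $\mathcal{D}$ (objects $A_0,A_1,A_2$ with non-identity maps $A_0\to A_1$ and $A_0\to A_2$) and to compare the two pushouts through their defining short exact sequences in $E$. So suppose $F,G\colon\mathcal{D}\to\mathcal{C}$ are homotopy colimit diagrams and $\phi\colon F\to G$ is a map of diagrams with each $\phi(A_i)$ an $E$-stable equivalence; by hypothesis both structure maps $F(A_0)\to F(A_1)$ and $F(A_0)\to F(A_2)$ are cofibrations, i.e.\ $E$-monomorphisms. First I would check that the map $F(A_0)\to F(A_1)\oplus F(A_2)$ with components the two cofibrations (up to sign) is again an $E$-monomorphism: it factors as the diagonal $F(A_0)\to F(A_0)\oplus F(A_0)$, a split monomorphism and hence an $E$-monomorphism by Lemma~\ref{split monos are in E}, followed by the direct sum of two $E$-monomorphisms, which is an $E$-monomorphism by Lemma~\ref{direct sum of elements of E is in E}; the composite is an $E$-monomorphism by the dual of Lemma~\ref{composite of E-epics is E-epic} (valid since $E$ has retractile monics and $\mathcal{C}$ has enough $E$-injectives). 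Consequently $\colim F$ sits in a short exact sequence in $E$
\[ 0\to F(A_0)\to F(A_1)\oplus F(A_2)\to\colim F\to 0, \]
and likewise for $G$, and $\phi$ induces a map of these two sequences whose induced map on cokernels is exactly $\colim F\to\colim G$.

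Next I would apply $\Ext^\bullet_{\mathcal{C}/E}(-,M)$, for an arbitrary object $M$, to this ladder of short exact sequences, obtaining a map of long exact sequences in $\Ext_{\mathcal{C}/E}$. The outer vertical comparison maps—those on the $F(A_0)\leftrightarrow G(A_0)$ and $F(A_1)\oplus F(A_2)\leftrightarrow G(A_1)\oplus G(A_2)$ terms—are isomorphisms in every positive degree. Indeed, each $\phi(A_i)$ is an $E$-stable equivalence, so by Lemma~\ref{stable hom and nat trans of ext1} it induces a natural isomorphism on $\Ext^1_{\mathcal{C}/E}$, and then Lemma~\ref{iso in ext1 means iso after direct sum with projectives} supplies an isomorphism $F(A_i)\oplus P\cong G(A_i)\oplus Q$ with $P,Q$ being $E$-projective and compatible with $\phi(A_i)$; since $\Ext^n_{\mathcal{C}/E}$ of an $E$-projective vanishes for $n\geq 1$, it follows that $\phi(A_i)$ induces a natural isomorphism on $\Ext^n_{\mathcal{C}/E}$ for all $n\geq 1$. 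Additivity of $\Ext^n_{\mathcal{C}/E}$ over direct sums then handles the $F(A_1)\oplus F(A_2)$ term.

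Then I would run the Five Lemma on the five-term segment of the two long exact sequences surrounding $\Ext^2_{\mathcal{C}/E}(\colim F,M)$ and $\Ext^2_{\mathcal{C}/E}(\colim G,M)$, namely the terms running from $\Ext^1_{\mathcal{C}/E}\bigl(F(A_1)\oplus F(A_2),M\bigr)$ through $\Ext^2_{\mathcal{C}/E}\bigl(F(A_0),M\bigr)$. All four outer comparison maps sit in degree $\geq 1$ and are therefore isomorphisms, so the middle map $\Ext^2_{\mathcal{C}/E}(\colim G,M)\to\Ext^2_{\mathcal{C}/E}(\colim F,M)$ is an isomorphism, for every $M$. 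Thus $\colim F\to\colim G$ induces a natural isomorphism $\Ext^2_{\mathcal{C}/E}(\colim G,-)\xrightarrow{\cong}\Ext^2_{\mathcal{C}/E}(\colim F,-)$, and since every $E$-projective object is $E$-injective, Lemma~\ref{ext2 equiv implies ext1 equiv} upgrades this to the conclusion that $\colim F\to\colim G$ is an $E$-stable equivalence, i.e.\ a weak equivalence, as required.

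The step I expect to be the crux is precisely the passage from $\Ext^2$ to stable equivalence. The Five Lemma cannot be run one degree lower, at $\Ext^1$, because the relevant outer terms there involve $\hom_{\mathcal{C}}(-,M)=\Ext^0_{\mathcal{C}/E}(-,M)$, on which $E$-stable equivalences need \emph{not} induce isomorphisms; so one genuinely obtains only an $\Ext^2$-isomorphism of the pushouts by this direct comparison. It is exactly the hypothesis that $E$-projectives are $E$-injective—packaged in Lemma~\ref{ext2 equiv implies ext1 equiv} through the splitting of the Oort-type sequence of Prop.~\ref{generalization of oort's result}—that closes the gap and converts the $\Ext^2$-isomorphism into an honest stable equivalence. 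I note that Lemma~\ref{left properness} is not needed for this statement; it is the tool reserved for the stronger Waldhausen axiom {\bf Weq 2}, where only one leg of the span is required to be a cofibration and the defining short exact sequence in $E$ is therefore unavailable.
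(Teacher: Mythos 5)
Your proposal is correct and follows essentially the same route as the paper's proof: present each pushout by the short exact sequence $0\to F(A_0)\to F(A_1)\oplus F(A_2)\to\colim F\to 0$ in $E$, run the Five Lemma on the induced $\Ext_{\mathcal{C}/E}(-,M)$ ladder to get an isomorphism on $\Ext^2$ of the pushouts, and then invoke Lemma~\ref{ext2 equiv implies ext1 equiv}. The only (harmless) divergence is that you establish that $F(A_0)\to F(A_1)\oplus F(A_2)$ is an $E$-monomorphism by factoring through the diagonal and a direct sum of $E$-monics, where the paper cites Lemma~\ref{shearing E-monics}; your closing remarks about why the comparison stops at $\Ext^2$ and why Lemma~\ref{left properness} is not needed here are also accurate.
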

\begin{proof}
Let $\mathcal{D}$ be the small category indexing pushout diagrams, 
i.e., $\mathcal{D}$ has three objects $A_0,A_1,A_2$, maps $A_0\rightarrow A_1$ and $A_0\rightarrow A_2$,
and no other non-identity maps. Suppose $F,G: \mathcal{D}\rightarrow\mathcal{C}$ are homotopy colimit diagrams and
$\phi: F\rightarrow G$ is a natural transformation such that $\phi(X): F(X) \rightarrow G(X)$ is an $E$-stable equivalence
for every object $X$ of $\mathcal{D}$. Then we have the commutative diagram with rows short exact sequences in $E$:
\[ \xymatrix{
0 \ar[r]\ar[d] & F(A_0) \ar[d]\ar[r] & F(A_1)\oplus F(A_2)\ar[d]\ar[r] & \colim F \ar[r]\ar[d] & 0\ar[d] \\
0 \ar[r] & G(A_0) \ar[r] & G(A_1)\oplus G(A_2) \ar[r] & \colim G \ar[r] & 0. }\]
That the maps $F(A_0) \rightarrow F(A_1)\oplus F(A_2)$ and $G(A_0) \rightarrow G(A_1)\oplus G(A_2)$ are 
$E$-monomorphisms, and hence that the rows are exact and in $E$, follows from Lemma~\ref{shearing E-monics}.
Now, for any object $M$ of $\mathcal{C}$, we get the commutative diagram with exact columns
\[\xymatrix{
\Ext^1_{\mathcal{C}/E}(G(A_1)\oplus G(A_2), M)\ar[d] \ar[r]^{\cong} & \Ext^1_{\mathcal{C}/E}(F(A_1)\oplus F(A_2), M)\ar[d] \\
\Ext^1_{\mathcal{C}/E}(G(A_0), M) \ar[d]\ar[r]^{\cong}  & \Ext^1_{\mathcal{C}/E}(F(A_0), M) \ar[d] \\
\Ext^2_{\mathcal{C}/E}(\coker G, M) \ar[d]\ar[r] & \Ext^2_{\mathcal{C}/E}(\coker F, M) \ar[d] \\
\Ext^2_{\mathcal{C}/E}(G(A_1)\oplus G(A_2), M)\ar[r]^{\cong} \ar[d] & \Ext^2_{\mathcal{C}/E}(F(A_1)\oplus F(A_2), M) \ar[d] \\
\Ext^2_{\mathcal{C}/E}(G(A_0), M)\ar[r]^{\cong} & \Ext^2_{\mathcal{C}/E}(F(A_0), M) }\]
where the horizontal maps marked as isomorphisms are isomorphisms by Lemma~\ref{stable hom and nat trans of ext1}. 
By the Five Lemma, the remaining horizontal map is an isomorphism.
So we have a natural isomorphism of functors 
\[ \Ext^2_{\mathcal{C}/E}(\colim G, -) \stackrel{\cong}{\longrightarrow} \Ext^2_{\mathcal{C}/E}(\colim F, -) \]
and now, by Lemma~\ref{ext2 equiv implies ext1 equiv}, the
map $\colim F \rightarrow \colim G$ is an $E$-stable equivalence. Hence homotopy pushouts are well-defined in $\mathcal{C}$.
\end{proof}

\begin{corollary}\label{existence result on weq2}
Let $\mathcal{C}$ be a weak Waldhausen abelian category.  Let $E$ be a class allowable for $\mathcal{C}$.  
Suppose $\mathcal{C}$ has enough $E$-projectives and enough $E$-injectives, 
suppose $E$ has sectile epics and retractile monics,
and suppose that every $E$-projective object is $E$-injective. 
Then $\mathcal{C}$ satisfies Waldhausen's axiom {\bf Weq 2}.
\end{corollary}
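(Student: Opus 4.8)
The plan is to re-run the argument of Prop.~\ref{main prop on well-definedness of homotopy pushouts}, observing that Waldhausen's axiom {\bf Weq 2} is a mild strengthening of the well-definedness of homotopy pushouts proved there, and supplying the one extra ingredient this strengthening requires. The point needing care is that a homotopy pushout diagram, in the sense of Def.~\ref{def of homotopy colimit diagram}, has \emph{both} legs of its span equal to cofibrations, whereas {\bf Weq 2} requires only that the left legs $Y^{\prime}\rightarrow X^{\prime}$ and $Y\rightarrow X$ be cofibrations, leaving $Y^{\prime}\rightarrow Z^{\prime}$ and $Y\rightarrow Z$ arbitrary. So I cannot merely quote Prop.~\ref{main prop on well-definedness of homotopy pushouts}; I must verify that its argument survives when only one leg of each span is an $E$-monomorphism.

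First I would form, for the given {\bf Weq 2} diagram, the pushout short exact sequence
\[ 0\rightarrow Y\stackrel{w}{\longrightarrow} X\oplus Z\rightarrow X\coprod_Y Z\rightarrow 0 \]
and its primed analogue for $X^{\prime}\coprod_{Y^{\prime}}Z^{\prime}$, where, writing $u\colon Y\rightarrow X$ for the cofibration and $v\colon Y\rightarrow Z$ for the right leg, the map $w$ has components $u$ and $-v$ (and is visibly monic, since $u$ is). The crux of the whole argument---and the only place where it goes beyond the proof of Prop.~\ref{main prop on well-definedness of homotopy pushouts}---is to check that $w$ is an $E$-monomorphism even though only $u$, and not $v$, is assumed to be one. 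Here I would appeal to the dual of Heller's theorem~\ref{heller's theorem}: since $\mathcal{C}$ has enough $E$-injectives and the notion of an allowable class is self-dual (exactly as exploited in the proof of Lemma~\ref{shearing E-monics}), a map is an $E$-monomorphism as soon as every morphism from its source into an $E$-injective extends over it. Given any $h\colon Y\rightarrow I$ with $I$ an $E$-injective, the hypothesis that $u$ is an $E$-monomorphism lets me extend $h$ over $u$ to some $\overline{h}\colon X\rightarrow I$; then the morphism $X\oplus Z\rightarrow I$ with components $\overline{h}$ and $0$ extends $h$ over $w$. Hence $w$ is an $E$-monomorphism, and the displayed sequence, together with its primed version, is genuinely a short exact sequence in $E$.

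With both pushout sequences now known to lie in $E$, the rest of the proof is word-for-word that of Prop.~\ref{main prop on well-definedness of homotopy pushouts}. Applying $\Ext_{\mathcal{C}/E}(-,M)$ to the resulting map of short exact sequences, the vertical $E$-stable equivalences $X^{\prime}\rightarrow X$, $Y^{\prime}\rightarrow Y$, $Z^{\prime}\rightarrow Z$ induce isomorphisms on $\Ext^n_{\mathcal{C}/E}$ for every $n\geq 1$ by Lemma~\ref{stable hom and nat trans of ext1} (using the additivity of $\Ext^n_{\mathcal{C}/E}$ in its first variable to handle the direct sum $X\oplus Z$). The Five Lemma, applied exactly as in Prop.~\ref{main prop on well-definedness of homotopy pushouts}, then shows that $X^{\prime}\coprod_{Y^{\prime}}Z^{\prime}\rightarrow X\coprod_Y Z$ induces a natural isomorphism $\Ext^2_{\mathcal{C}/E}(X\coprod_Y Z,-)\stackrel{\cong}{\longrightarrow}\Ext^2_{\mathcal{C}/E}(X^{\prime}\coprod_{Y^{\prime}}Z^{\prime},-)$.

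Finally I would invoke Lemma~\ref{ext2 equiv implies ext1 equiv}---which is precisely where the hypothesis that every $E$-projective object is $E$-injective is consumed---to upgrade this $\Ext^2$-isomorphism to the statement that $X^{\prime}\coprod_{Y^{\prime}}Z^{\prime}\rightarrow X\coprod_Y Z$ is an $E$-stable equivalence, i.e.\ a weak equivalence, which is exactly {\bf Weq 2}. I expect the sole real obstacle to be the $E$-monomorphism verification for $w$ described above: it is the one step at which one-sided cofibrancy forces a new argument (the extension property against $E$-injectives) in place of the direct-sum-of-$E$-monomorphisms reasoning that was available in the two-legged setting of Prop.~\ref{main prop on well-definedness of homotopy pushouts}.
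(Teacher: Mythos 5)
Your proof is correct, but it takes a genuinely different route from the paper's. The paper disposes of the corollary in one line, citing Lemma~\ref{left properness} together with Prop.~\ref{main prop on well-definedness of homotopy pushouts}; the implicit strategy is to factor the comparison map of pushouts through intermediate pushouts, using left properness to absorb the non-cofibration legs $Y\rightarrow Z$ and $Y^{\prime}\rightarrow Z^{\prime}$, and the proposition to handle the cofibration legs. You instead bypass Lemma~\ref{left properness} entirely and rerun the $\Ext$/Five-Lemma argument of Prop.~\ref{main prop on well-definedness of homotopy pushouts} directly on the one-legged span, supplying the one genuinely new ingredient this requires: the verification, via the dual of Heller's theorem~\ref{heller's theorem} (legitimate here, since the corollary assumes enough $E$-injectives and retractile monics, and the paper itself invokes this dualization in Lemmas~\ref{shearing E-monics} and \ref{when admissible monics are closed under pushout}), that $w=(u,-v)\colon Y\rightarrow X\oplus Z$ is an $E$-monomorphism when only $u$ is assumed to be one. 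Your extension argument $h\mapsto(\overline{h},0)$ is exactly right, and the remainder (additivity of $\Ext^n_{\mathcal{C}/E}$ in the first variable, the Five Lemma, and Lemma~\ref{ext2 equiv implies ext1 equiv}) goes through verbatim. What your route buys is that it is fully self-contained and in effect proves a strengthening of Prop.~\ref{main prop on well-definedness of homotopy pushouts}---well-definedness of pushouts along spans with only one cofibration leg---of which {\bf Weq 2} is then an immediate special case; what the paper's route buys is brevity and reuse of left properness, at the cost of leaving the gluing-lemma bookkeeping to the reader.
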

\begin{proof}
Follows immediately from
Lemma~\ref{left properness} and Prop.~\ref{main prop on well-definedness of homotopy pushouts}.
\end{proof}

\section{Positive results on sequential homotopy colimits.}

In this section we show that, under the same conditions from the previous section (that relatively projective objects are relatively injective),
sequential homotopy colimits are well-defined (Prop.~\ref{sequential colimits are well-defined}).
As a corollary, in the next section we will be able to show that geometric realization of simplicial objects in $\mathcal{C}$ is well-defined (Cor.~\ref{geometric realization is well-defined}).

\begin{lemma}\label{nice factorizations in E-projective covers}
Suppose $\mathcal{C}$ is an abelian category and $E$ is an allowable class in $\mathcal{C}$ with retractile monics. Suppose $\mathcal{C}$ has
enough $E$-projectives and enough $E$-injectives. Suppose we have objects $P,X,Y$
in $\mathcal{C}$ with $P$ an $E$-projective, and suppose we have a map $f: X\rightarrow Y$ and an $E$-epimorphism $p: P \rightarrow X$.
Then there exists an $E$-projective object $Q$, a split monomorphism $g: P \rightarrow Q$ with $E$-projective cokernel, and an $E$-epimorphism $q: Q\rightarrow Y$ making the diagram
\begin{equation}\label{factorization diagram, desired} \xymatrix{
P \ar[r]^p \ar[d]^g & X \ar[d]^f \\
Q \ar[r]^q & Y }\end{equation}
commute.
\end{lemma}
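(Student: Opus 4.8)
The plan is to build $Q$ as the direct sum of $P$ with an $E$-projective cover of $Y$, so that $g$ becomes literally a summand inclusion and commutativity is forced by construction. First I would invoke the hypothesis that $\mathcal{C}$ has enough $E$-projectives to choose an $E$-epimorphism $r\colon P'\rightarrow Y$ with $P'$ an $E$-projective object. I then set $Q = P\oplus P'$, which is $E$-projective because a finite direct sum of $E$-projectives is again $E$-projective: the functor $\hom_{\mathcal{C}}(P\oplus P',-)\cong\hom_{\mathcal{C}}(P,-)\oplus\hom_{\mathcal{C}}(P',-)$ carries $E$-epimorphisms to surjections of abelian groups. I let $g\colon P\rightarrow Q$ be the inclusion of the first summand; this is a split monomorphism, split by the projection onto $P$, and its cokernel is $P'$, which is $E$-projective, exactly as the conclusion demands.

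Next I would define $q\colon Q = P\oplus P'\rightarrow Y$ by the row matrix $q = [\,f\circ p\ \ r\,]$, so that $q$ restricts to $f\circ p$ on the summand $P$ and to $r$ on the summand $P'$. Commutativity of diagram~\ref{factorization diagram, desired} is then immediate, since $q\circ g$ is precisely the $P$-component of $q$, namely $f\circ p$.

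The only real content is verifying that $q$ is an $E$-epimorphism, and the hard part is doing this \emph{without} appealing to Lemma~\ref{composite of E-epics is E-epic} (which would factor $q$ as $r$ composed with a split epimorphism, but whose hypotheses include sectile epics, a condition not assumed in the present lemma). Instead I would apply Heller's theorem~\ref{heller's theorem} directly: it suffices to show every map $t\colon T\rightarrow Y$ out of an $E$-projective object $T$ factors through $q$. Since $r$ is an $E$-epimorphism and $T$ is $E$-projective, $t$ lifts along $r$ to some $\tilde t\colon T\rightarrow P'$ with $r\circ\tilde t = t$; then the map $T\rightarrow P\oplus P'$ with components $(0,\tilde t)$ satisfies $q\circ(0,\tilde t) = r\circ\tilde t = t$. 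Hence every such $t$ lifts through $q$, so $q$ is an $E$-epimorphism by Heller's theorem, completing the construction. I note that this argument in fact uses only the existence of enough $E$-projectives (both to produce $r$ and to apply Heller's theorem); the retractile-monics and enough-$E$-injectives hypotheses stated in the lemma, while natural for the surrounding development, are not actually needed here.
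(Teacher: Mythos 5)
Your construction of $Q$, $g$, and $q$ is essentially the paper's own: the paper likewise takes $Q = P\oplus P_0$ with $P_0$ an $E$-projective admitting an $E$-epimorphism onto $Y$. But the one step you single out as ``the only real content''---that $q$ is an $E$-epimorphism---is exactly where your argument has a genuine gap. Heller's theorem~\ref{heller's theorem} is stated, and is only available, for allowable classes with \emph{sectile epics} and enough $E$-projectives; sectile epics is precisely the hypothesis you correctly observe is missing from the present lemma, so invoking Heller is no improvement over invoking Lemma~\ref{composite of E-epics is E-epic}. Under the hypotheses actually stated (retractile monics, enough $E$-injectives, enough $E$-projectives), the lifting criterion does not characterize $E$-epimorphisms, so showing that every map from an $E$-projective to $Y$ factors through $q$ does not show that $q$ is $E$-epic. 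Your closing claim that only enough $E$-projectives are needed is therefore unsupported as well.

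The repair is the one the paper uses. Since $P$ is $E$-projective and $r$ is an $E$-epimorphism, $f\circ p$ lifts along $r$ to a map $\ell\colon P\rightarrow P'$ with $r\circ\ell = f\circ p$. The paper absorbs $\ell$ into the splitting, taking $g$ to be the map $P\rightarrow P\oplus P'$ with components $\id_P$ and $\ell$ (still a split monomorphism with cokernel $P'$) and taking $q$ to be zero on the first summand and $r$ on the second. Equivalently, keeping your $g$ and $q$, precomposing $q$ with the shear automorphism $\left[\begin{array}{ll}\id_P & 0 \\ -\ell & \id_{P'}\end{array}\right]$ of $P\oplus P'$ transforms it into the map that is $0$ on $P$ and $r$ on $P'$, so the two kernel short exact sequences are isomorphic. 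The latter sequence is the direct sum of $0\rightarrow P\rightarrow P\rightarrow 0\rightarrow 0$ and $0\rightarrow \ker r\rightarrow P'\rightarrow Y\rightarrow 0$, both of which are in $E$, and the direct sum lies in $E$ by the dual of Lemma~\ref{direct sum of elements of E is in E}, valid because $E$ has retractile monics and $\mathcal{C}$ has enough $E$-injectives (see the parenthetical remark in the proof of Lemma~\ref{shearing E-monics}). That is exactly where the hypotheses you propose to discard are used.
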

\begin{proof}
Choose an $E$-projective $P_0$ and an $E$-epimorphism $s: P_0 \rightarrow Y$. Since $P$ is $E$-projective, the composite map $f\circ p: P\rightarrow Y$
lifts over $s$ to give a map $\ell: P \rightarrow P_0$, i.e., $s\circ\ell = f\circ p$. 
Let $g: P \rightarrow P \oplus P_0$ be the map given by the matrix of maps
\[ g = \left[ \begin{array}{ll} \id_P & \ell \end{array} \right] \]
and let $q: P \oplus P_0 \rightarrow Y$ be the map given by the matrix of maps
\[ q = \left[ \begin{array}{ll} 0 \\ s \end{array} \right] . \]
It is trivial to check that the diagram
\begin{equation}\label{factorization diagram, constructed} \xymatrix{
P \ar[r]^p \ar[d]^g & X \ar[d]^f \\
P\oplus P_0 \ar[r]^q & Y }\end{equation}
commutes. The map $q$ is an $E$-epimorphism since $s$ is, by the dual of Lemma~\ref{split monos are in E} 
combined with Lemma~\ref{direct sum of elements of E is in E}. 
We also have that $\pi\circ g = \id_P$, where $\pi: P\oplus P_0 \rightarrow P$ is projection to the first summand.
So $g$ is a split monomorphism. That its cokernel is $E$-projective follows from an easy application of the Nine Lemma to get the commutative diagram
with exact rows and exact columns:
\[\xymatrix{
 & 0 \ar[r]\ar[d] & 0 \ar[r]\ar[d] & 0 \ar[d] &  \\
0 \ar[r]\ar[d] & 0 \ar[r]\ar[d] & P_0 \ar[r]^{\id} \ar[d]^i & P_0 \ar[r]\ar[d]^{\id} & 0 \ar[d] \\
0 \ar[r]\ar[d] & P \ar[r]^g \ar[d]^{\id} & P\oplus P_0 \ar[r]^{\pi^{\prime}}\ar[d]^{\pi} & P_0 \ar[r] \ar[d] & 0 \ar[d] \\
0 \ar[r] & P \ar[r]^{\id} \ar[d] & P \ar[r]\ar[d] & 0 \ar[r]\ar[d] & 0 \\
 & 0 \ar[r] & 0 \ar[r] & 0 & 
}\]
where $i$ is inclusion as the second summand and $\pi^{\prime}$ is projection to the second summand. So $P_0$, an $E$-projective, is the cokernel of $g$.

So diagram~\ref{factorization diagram, constructed} is the desired diagram~\ref{factorization diagram, desired}.
\end{proof}

For the next proposition, we use the phrase ``sequential colimit'' to describe any colimit indexed by the partially-ordered set of the
natural numbers regarded as a category, i.e., a colimit with shape
\[ \bullet \rightarrow \bullet \rightarrow \bullet \rightarrow \dots .\]
We also include, in the next proposition, a requirement that sequential colimits of $E$-long exact sequences be $E$-long exact.
This is a mild assumption; in the absolute case, when $E$ is the class of all short exact sequences in $\mathcal{C}$, this assumption
is equivalent to Grothendieck's axiom AB5.
We remind the reader that axiom AB5 on an abelian category $\mathcal{C}$ stipulates that 
small colimits exist in $\mathcal{C}$ and that a sequential colimit of exact sequences in $\mathcal{C}$ remains exact. This axiom is satisfied,
for example, by the category of $R$-modules, for any ring $R$.
\begin{prop}\label{sequential colimits are well-defined}
Let $\mathcal{C}$ be a weak Waldhausen abelian category. Let $E$ be a class allowable for $\mathcal{C}$, and suppose that a sequential colimit
of $E$-long exact sequences in $\mathcal{C}$ is $E$-long exact.
Suppose $\mathcal{C}$ has enough $E$-projectives and enough $E$-injectives, 
suppose $E$ has retractile monics and sectile epics,
and suppose that every $E$-projective object is $E$-injective.
Then sequential homotopy colimits are well-defined in $\mathcal{C}$.
\end{prop}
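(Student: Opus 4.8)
The plan is to mimic the proof of Prop.~\ref{main prop on well-definedness of homotopy pushouts}: given a map $\phi\colon F\to G$ of sequential homotopy colimit diagrams with each $\phi(n)\colon F(n)\to G(n)$ an $E$-stable equivalence, I would present each of $\colim F$ and $\colim G$ as the cokernel of a short exact sequence in $E$, apply $\Ext_{\mathcal{C}/E}(-,M)$, and use the Five Lemma to show that $\colim\phi$ induces a natural isomorphism $\Ext^2_{\mathcal{C}/E}(\colim G,-)\to\Ext^2_{\mathcal{C}/E}(\colim F,-)$; Lemma~\ref{ext2 equiv implies ext1 equiv} would then force $\colim\phi$ to be an $E$-stable equivalence, which is precisely what well-definedness demands. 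The new ingredient, replacing the pushout exact sequence, is the mapping-telescope presentation of a sequential colimit.

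Concretely, for a sequential diagram $F(0)\to F(1)\to\cdots$ I would use the telescope short exact sequence
\[ 0\to\bigoplus_{n} F(n)\stackrel{\id-\sigma}{\longrightarrow}\bigoplus_{n} F(n)\to\colim F\to 0, \]
where $\sigma$ is the shift endomorphism induced by the structure maps $F(n)\to F(n+1)$. First I would realize this as the sequential colimit, over $N$, of the finite telescope sequences
\[ 0\to\bigoplus_{n=0}^{N-1} F(n)\to\bigoplus_{n=0}^{N} F(n)\to F(N)\to 0, \]
whose connecting maps between successive $N$ are the evident summand inclusions on the two left terms together with $F(N)\to F(N+1)$ on the right. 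Each finite telescope is short exact (an elementary check, independent of whether the structure maps are $E$-monic) and lies in $E$: its quotient map $\bigoplus_{n=0}^N F(n)\to F(N)$ is split by the inclusion of the top summand, hence is a split epimorphism and therefore an $E$-epimorphism by Lemma~\ref{split monos are in E}. Invoking the standing hypothesis that a sequential colimit of $E$-long exact sequences is $E$-long exact, the telescope sequence for $F$ (and likewise for $G$) is itself a short exact sequence in $E$, and $\phi$ induces a map between the two telescope sequences via $\bigoplus_n\phi(n)$ on both outer terms and $\colim\phi$ on the cokernels.

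It then remains to run the Five Lemma on the long exact sequences obtained by applying $\Ext_{\mathcal{C}/E}(-,M)$, and for this I need $\bigoplus_n\phi(n)$ to induce isomorphisms on $\Ext^i_{\mathcal{C}/E}(-,M)$ for all $i\ge 1$. This reduces to the relative analogue of the formula $\Ext^i_{\mathcal{C}/E}(\bigoplus_n A_n,M)\cong\prod_n\Ext^i_{\mathcal{C}/E}(A_n,M)$: an arbitrary direct sum of $E$-projectives is again $E$-projective (immediate from the lifting characterization of $E$-projectivity), and, by the same colimit hypothesis together with the finite-direct-sum Lemma~\ref{direct sum of elements of E is in E} applied to each constituent short exact sequence, a direct sum of $E$-projective resolutions is an $E$-projective resolution of the direct sum; so the coproduct-to-product formula holds and is natural in the $A_n$. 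Since each $\phi(n)$ is an $E$-stable equivalence, each $\Ext^i_{\mathcal{C}/E}(\phi(n),M)$ is an isomorphism for $i\ge 1$ by Lemma~\ref{stable hom and nat trans of ext1}, whence $\bigoplus_n\phi(n)$ induces a product of isomorphisms. The four outer terms flanking $\Ext^2_{\mathcal{C}/E}(\colim(-),M)$ in the long exact sequences involve only the summand terms $\bigoplus_n F(n)$ and $\bigoplus_n G(n)$, so the Five Lemma yields the desired $\Ext^2$-isomorphism for $\colim\phi$.

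I expect the main obstacle to be the telescope-in-$E$ step: verifying that the infinite telescope sequence is genuinely short exact and lies in $E$ is exactly the place where the hypothesis on sequential colimits of $E$-long exact sequences is indispensable, and the companion point---that relative $\Ext$ converts the countable direct sum into a product---must be checked within the relative theory rather than quoted from the classical one. Once those two relative-homological facts are secured, the Five Lemma argument and the final appeal to Lemma~\ref{ext2 equiv implies ext1 equiv} are formally identical to the corresponding steps in Prop.~\ref{main prop on well-definedness of homotopy pushouts}.
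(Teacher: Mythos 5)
Your proof is correct, but it takes a genuinely different route from the paper's. The paper builds, via Lemma~\ref{nice factorizations in E-projective covers}, a compatible system of $E$-projective resolutions of the $F(j)$ whose vertical maps are split monomorphisms with $E$-projective cokernels, shows that the colimit of these resolutions is an $E$-projective resolution of $\colim F$, and then runs the Milnor $\lim$--$R^1\lim$ exact sequence for the cohomology of the resulting limit cochain complex before applying the Five Lemma. You instead present $\colim F$ as the cokernel in the telescope sequence $0\to\bigoplus_n F(n)\xrightarrow{\id-\sigma}\bigoplus_n F(n)\to\colim F\to 0$, verify that this sequence lies in $E$ by exhibiting it as the sequential colimit of the split finite telescopes (which is exactly where the standing AB5-type hypothesis enters, just as it does in the paper when passing to the colimit resolution), establish the coproduct-to-product formula for relative $\Ext$, and apply the Five Lemma to the induced long exact sequence. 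Both arguments converge on the same endgame: a natural isomorphism on $\Ext^2_{\mathcal{C}/E}$ followed by Lemma~\ref{ext2 equiv implies ext1 equiv}. Your route is arguably leaner: it avoids constructing compatible resolutions and avoids derived limits altogether, it never uses that the structure maps of the diagrams are cofibrations (so it proves well-definedness for arbitrary sequential diagrams), and on inspection it does not seem to need the retractile-monics or enough-$E$-injectives hypotheses except through Lemma~\ref{ext2 equiv implies ext1 equiv}. What the paper's approach buys in exchange is the finer computational output of the Milnor sequence, expressing $\Ext^n_{\mathcal{C}/E}(\colim F,M)$ in terms of $\lim_j$ and $R^1\lim_j$ of the groups $\Ext^*_{\mathcal{C}/E}(F(j),M)$, which is of independent interest. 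The two points you flag as needing care---that the infinite telescope is a short exact sequence in $E$, and that relative $\Ext$ carries the countable coproduct to a product---are indeed the only nonformal steps, and your proposed verifications of both (colimit of split finite telescopes; direct sums of $E$-projective resolutions are $E$-projective resolutions, again by the finite case plus the sequential-colimit hypothesis) are sound.
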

\begin{proof}
Let $F, G: \mathbb{N}\rightarrow \mathcal{C}$ be homotopy colimit diagrams and let $\phi: F\rightarrow G$ be a natural transformation (i.e., map of diagrams)
such that $\phi(n): F(n)\rightarrow G(n)$ is an $E$-stable equivalence for every $n\in \mathbb{N}$. In other words, we have a commutative diagram
\begin{equation}
\xymatrix{
F(0) \ar[r]\ar[d] & F(1) \ar[r]\ar[d] & F(2) \ar[r]\ar[d] & \dots \\
G(0) \ar[r] & G(1) \ar[r] & G(2) \ar[r] & \dots  }
\end{equation}
in which all horizontal maps are cofibrations (in particular, $E$-monomorphisms) and all vertical maps are $E$-stable equivalences.
Then, by Lemma~\ref{stable hom and nat trans of ext1}, the vertical maps in the diagram
\[\xymatrix{
\Ext^n_{\mathcal{C}/E}(F(0), M) & \Ext^n_{\mathcal{C}/E}(F(1), M) \ar[l] & \Ext^n_{\mathcal{C}/E}(F(2), M) \ar[l] & \dots \ar[l] \\
\Ext^n_{\mathcal{C}/E}(G(0), M) \ar[u] & \Ext^n_{\mathcal{C}/E}(G(1), M) \ar[l] \ar[u] & \Ext^n_{\mathcal{C}/E}(G(2), M) \ar[l] \ar[u] & \dots \ar[l] }\]
are isomorphisms for all $n\geq 1$ and for any object $M$ of $\mathcal{C}$, and these isomorphisms are natural in $M$. Consequently, we have 
natural isomorphisms
\begin{equation}\label{natural iso in lim} \lim_i \Ext^n_{\mathcal{C}/E}(G(i), M) \stackrel{\cong}{\longrightarrow} \lim_i \Ext^n_{\mathcal{C}/E}(F(i), M) \end{equation}
and
\begin{equation}\label{natural iso in lim1} R^1\lim_i \Ext^n_{\mathcal{C}/E}(G(i), M) \stackrel{\cong}{\longrightarrow} R^1\lim_i \Ext^n_{\mathcal{C}/E}(F(i), M) \end{equation}
for all $n\geq 1$, where we are writing $R^1\lim$ for the first right-derived functor of $\lim$.

Now we can use Lemma~\ref{nice factorizations in E-projective covers} to choose $E$-projective resolutions for each $F(i)$ to get a commutative diagram
\[\xymatrix{ 
\dots \ar[r] & P^F_{2,0} \ar[r]\ar[d] & P^F_{1,0} \ar[r]\ar[d] & P^F_{0,0}\ar[r]\ar[d] & F(0) \ar[r]\ar[d] & 0\ar[d] \\
\dots \ar[r] & P^F_{2,1} \ar[r]\ar[d] & P^F_{1,1} \ar[r]\ar[d] & P^F_{0,1}\ar[r]\ar[d] & F(1) \ar[r]\ar[d] & 0\ar[d] \\
\dots \ar[r] & P^F_{2,2} \ar[r]\ar[d] & P^F_{1,2} \ar[r]\ar[d] & P^F_{0,2}\ar[r]\ar[d] & F(2) \ar[r]\ar[d] & 0\ar[d] \\
       & \vdots  & \vdots & \vdots & \vdots & \vdots }\]
in which each row is an $E$-projective resolution and each vertical map $P^F_{i,j}\rightarrow P^F_{i,j+1}$ is a split monomorphism.

We claim that, for every $i\in \mathbb{N}$, the colimit $\colim_j P^F_{i,j}$ is $E$-projective. Let $f: X\rightarrow Y$ be an $E$-epimorphism
and $g: \colim_j P^F_{i,j}\rightarrow Y$ be a map. We write $g_j: P^F_{i,j}\rightarrow Y$ for the $j$th component map of $g$. Since
$P^F_{i,0}$ is $E$-projective, there exists a lift $\ell_0: P^F_{i,0}\rightarrow X$ of $g_0$ over $f$, which provides the first step of an induction. Suppose
we have a map $\ell_j: P^F_{i,j} \rightarrow X$ such that $f \circ \ell_j = g_j$. Then, since the map $P^F_{i,j}\rightarrow P^F_{i,j+1}$ is a split monomorphism
with $E$-projective cokernel, there exists a map $\ell_{j+1}: P^F_{i,j+1} \rightarrow X$ making the diagram
\[ \xymatrix{ F(i) \ar[d]^{\ell_i} \ar[r] & F(i+1) \ar[d]^{g_{i+1}} \ar@{-->}[dl]^{\ell_{i+1}} \\ X \ar[r]_f & Y }\]
commute. So we can assemble the maps $\{ \ell_j\}_{j\in \mathbb{N}}$ into a map $\ell: \colim_j P^F_{i,j}\rightarrow X$ such that
$f\circ \ell = g$. So $\colim_jP^F_{i,j}$ has the universal property defining an $E$-projective object, so $\colim_j P^F_{i,j}$ is $E$-projective.
We make the same constructions for $G$ as well as $F$, writing $P^G_{i,j}$ rather than $P^F_{i,j}$ for the $E$-projectives constructed in this way.

Now the chain complex
\begin{equation}\label{colimit resolution} 
\dots \rightarrow \colim_j P^F_{2,j} \rightarrow \colim_j P^F_{1,j} \rightarrow \colim_j P^F_{0,j} \rightarrow \colim F \rightarrow 0\end{equation}
is $E$-long exact, due to our assumption that sequential colimits of $E$-long exact sequences in $\mathcal{C}$ are $E$-long exact.
Furthermore, we have shown that each $\colim_j P^F_{i,j}$ is $E$-projective. So long exact sequence~\ref{colimit resolution} is an $E$-projective resolution of
$\colim F$. So, for any object $M$ of $\mathcal{C}$, the cohomology of the cochain complex
\begin{equation}
\xymatrix{
\dots   & \hom_{\mathcal{C}}(\colim_j P^F_{1,j},M)\ar[l]\ar[d]^{\cong} & \hom_{\mathcal{C}}(\colim_j P^F_{0,j},M)\ar[l] \ar[d]^{\cong} & 0\ar[l] \ar[d] \\
\dots & \lim_j \hom_{\mathcal{C}}( P^F_{1,j},M)\ar[l] & \lim_j \hom_{\mathcal{C}}( P^F_{0,j},M)\ar[l]  & 0\ar[l] 
}\end{equation}
is $\Ext_{\mathcal{C}/E}^*(\colim F, M)$, and this isomorphism is natural in $M$. 

Now we have the usual short exact sequence relating the cohomology of a sequential limit of cochain complexes of abelian groups to the sequential
limit of their cohomologies:
\[ 0 \rightarrow R^1\lim_j H^{n-1}\hom_{\mathcal{C}}(P^F_{\bullet, j}, M) \rightarrow
 H^n \lim_j\hom_{\mathcal{C}}(P^F_{\bullet, j}, M) \rightarrow \lim_j H^n \hom_{\mathcal{C}}(P^F_{\bullet, j}, M) \rightarrow 0 .\]
Due to isomorphisms~\ref{natural iso in lim} and \ref{natural iso in lim1}, we now have the commutative diagram with exact rows:
\[\xymatrix{
0 \ar[r]\ar[d] & R^1\lim_j \Ext^{n-1}_{\mathcal{C}/E}( G(j), M) \ar[r]\ar[d]^{\cong} &
 \Ext^n_{\mathcal{C}/E}(\colim G, M) \ar[r]\ar[d]^{\cong} &  \lim_j \Ext^n_{\mathcal{C}/E}(G(j), M) \ar[r]\ar[d]^{\cong} & 0 \ar[d] \\
0 \ar[r]\ar[d] & R^1\lim_j H^{n-1}\hom_{\mathcal{C}}(P^G_{\bullet, j}, M) \ar[r]\ar[d]^{b} &
 H^n \lim_j\hom_{\mathcal{C}}(P^G_{\bullet, j}, M) \ar[r]\ar[d]^{a} &  \lim_j H^n \hom_{\mathcal{C}}(P^G_{\bullet, j}, M) \ar[r]\ar[d]^{c} & 0 \ar[d] \\
0 \ar[r]\ar[d] & R^1\lim_j H^{n-1}\hom_{\mathcal{C}}(P^F_{\bullet, j}, M) \ar[r]\ar[d]^{\cong} &
 H^n \lim_j\hom_{\mathcal{C}}(P^F_{\bullet, j}, M) \ar[r]\ar[d]^{\cong} &  \lim_j H^n \hom_{\mathcal{C}}(P^F_{\bullet, j}, M) \ar[r]\ar[d]^{\cong} & 0 \ar[d] \\
0 \ar[r] & R^1\lim_j \Ext^{n-1}_{\mathcal{C}/E}( F(j), M) \ar[r] &
 \Ext^n_{\mathcal{C}/E}(\colim F, M) \ar[r] &  \lim_j \Ext^n_{\mathcal{C}/E}(F(j), M) \ar[r] & 0 }\]
and the vertical map marked $b$ is an isomorphism if $n\geq 2$, and the vertical map marked $c$ is an isomorphism if $n\geq 1$.
Hence, by the Five Lemma, the vertical map marked $c$ is an isomorphism if $n\geq 2$.

We conclude that the map
\[  \Ext^n_{\mathcal{C}/E}(\colim G, M)\rightarrow  \Ext^n_{\mathcal{C}/E}(\colim F, M)\]
is an isomorphism if $n\geq 2$. Hence, by Lemma~\ref{ext2 equiv implies ext1 equiv}, the assumption that every $E$-projective is 
$E$-injective implies that the map $\colim F\rightarrow \colim G$ is an $E$-stable equivalence.
So sequential homotopy colimits in $\mathcal{C}$ are unique up to homotopy.
\end{proof}

\section{Positive results on geometric realization.}

In this section we prove that, under the same assumptions made in the previous section plus the assumption that our abelian category satisfies Grothendieck's
axiom AB3, geometric realization of simplicial objects is well-defined (Cor.~\ref{geometric realization is well-defined}).

One can approach geometric realization as a particular
kind of colimit called a ``co-end''; this approach is taken in e.g. \cite{MR1712872}. We take a different approach to geometric realization in this section,
by regarding geometric realization as the sequential colimit of a certain
sequence of homotopy cofibers. We give an abbreviated description of this approach in Def.~\ref{def of gr tower},
but it is well-known in the special case of a pointed (e.g. stable) model category, and, for example, 
it appears in the context of a triangulated category in \cite{MR1214458}.

\begin{lemma}\label{pullback of surjections is surjective}
A pullback of a surjective map of abelian groups is surjective.
\end{lemma}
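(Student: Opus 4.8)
The plan is to compute the pullback explicitly and exhibit a preimage by hand; there is nothing here beyond the set-theoretic surjectivity of the given map. Suppose $f: A \rightarrow B$ is a surjection of abelian groups and $g: C \rightarrow B$ is an arbitrary homomorphism. The pullback of $f$ along $g$ is the projection $\pi: A\times_B C \rightarrow C$, where
\[ A\times_B C = \{ (a,c) \in A\oplus C : f(a) = g(c)\}, \]
and my goal is to show that $\pi$ is surjective.

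First I would fix an arbitrary element $c \in C$ and pass to its image $g(c) \in B$. Since $f$ is surjective, I can choose some $a \in A$ with $f(a) = g(c)$. The pair $(a,c)$ then satisfies the defining relation $f(a) = g(c)$ of the pullback, so $(a,c) \in A\times_B C$, and by construction $\pi(a,c) = c$. As $c$ was arbitrary, every element of $C$ is hit by $\pi$, so $\pi$ is surjective.

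There is essentially no obstacle to overcome: the entire content is the lift of $g(c)$ to an element of $A$, which is exactly the surjectivity of $f$. The argument is a one-step diagram chase and uses nothing about the abelian group structure beyond the explicit description of pullbacks in the category of abelian groups (the same argument works already at the level of sets).
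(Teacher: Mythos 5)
Your proof is correct and amounts to the same argument as the paper's: the paper reduces to the set-level statement by noting the forgetful functor to sets is a right adjoint (so the pullback of abelian groups has the set-theoretic pullback as its underlying set) and calls the rest an elementary exercise, while you simply write out that explicit pullback and perform the one-step lift directly. No gap; the two differ only in whether the "elementary exercise" is spelled out.
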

\begin{proof}
The forgetful functor from abelian groups to sets is a right adjoint, hence preserves limits.
It also clearly preserves surjections.
So the lemma is true if a pullback of a surjective maps of sets is surjective, which is 
an elementary exercise.
\end{proof}

\begin{lemma} 
\label{when admissible monics are closed under pushout}
Let $\mathcal{C}$ be an abelian category and let
$E$ be an allowable class with retractile
monics. Then $E$-monics
are closed under pushout in $\mathcal{C}$. That is,
if $X\rightarrow Z$ is an $E$-monic and $X\rightarrow Y$ is any morphism
in $\mathcal{C}$, then the canonical
map $Y\rightarrow Y\coprod_X Z$ is
an $E$-monic.
\end{lemma}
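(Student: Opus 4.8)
The plan is to reduce the statement to the membership of a single short exact sequence in $E$, and then to produce that membership using retractile monics together with a comparison argument. Write $j: X\rightarrow Z$ for the given $E$-monomorphism, $i: X\rightarrow Y$ for the arbitrary map, and $P=Y\coprod_X Z$ for the pushout, with canonical maps $\bar{i}: Y\rightarrow P$ and $\bar{j}: Z\rightarrow P$. Since $j$ is in particular a monomorphism, the standard facts about pushouts in an abelian category give that $\bar{i}$ is again a monomorphism, that $\coker\bar{i}$ is naturally isomorphic to $\coker j =: C$, and that the pushout square is bicartesian, so that the Mayer--Vietoris sequence $0\rightarrow X\xrightarrow{\binom{i}{-j}} Y\oplus Z\xrightarrow{(\bar{i}\ \bar{j})} P\rightarrow 0$ is exact. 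Thus the entire problem becomes showing that the short exact sequence $0\rightarrow Y\xrightarrow{\bar{i}} P\rightarrow C\rightarrow 0$ lies in $E$.

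The key use of the hypothesis comes next. First I would show that the Mayer--Vietoris inclusion $m:=\binom{i}{-j}: X\rightarrow Y\oplus Z$ is itself an $E$-monomorphism. Indeed, its composite with the projection $\pi_Z: Y\oplus Z\rightarrow Z$ is $-j$, which is isomorphic to $j$ and hence an $E$-monomorphism; since $E$ has retractile monics and $m$ is the inner factor of the $E$-monomorphism $\pi_Z\circ m = -j$, retractile monics force $m$ to be an $E$-monomorphism. Consequently the Mayer--Vietoris sequence lies in $E$, that is, $(\bar{i}\ \bar{j}): Y\oplus Z\rightarrow P$ is an $E$-epimorphism.

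It remains to pass from this $E$-epimorphism $Y\oplus Z\rightarrow P$ to the desired $E$-monomorphism $\bar{i}: Y\rightarrow P$, and I expect this to be the main obstacle, since it is not a formal consequence of the previous step (the only ``new'' $E$-monomorphism we have produced, namely $m$, emanates from $X$, not from $Y$, and $\bar{i}$ is in general not split). I would organize the transfer as a nine-lemma: assemble the commutative $3\times 3$ diagram whose middle row is the Mayer--Vietoris sequence, whose middle column is the split short exact sequence $0\rightarrow Y\xrightarrow{\iota_Y} Y\oplus Z\xrightarrow{\pi_Z} Z\rightarrow 0$, and whose left column is the split sequence $0\rightarrow 0\rightarrow X\xrightarrow{\id} X\rightarrow 0$ sitting over $-j: X\rightarrow Z$; the nine-lemma then identifies the right-hand column as precisely $0\rightarrow Y\xrightarrow{\bar{i}} P\rightarrow C\rightarrow 0$, and the content is to transfer membership in $E$ across the diagram to that column. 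The cleanest way to see it, and the way I would ultimately write it up, is dual: the cokernel map $\sigma: P\rightarrow C$ satisfies $\sigma\circ\bar{j}=p$, where $p: Z\rightarrow C$ is the cokernel projection of $j$ and hence an $E$-epimorphism; so by the $E$-epimorphism counterpart of retractile monics---available because the allowable-class axioms are self-dual, as the paper notes---$\sigma$ is an $E$-epimorphism, whence its kernel $\bar{i}$ is an $E$-monomorphism and the pushout sequence lies in $E$, as required.
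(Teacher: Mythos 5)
Your reduction of the problem to showing that the sequence $0\rightarrow Y\rightarrow Y\coprod_X Z\rightarrow \coker j\rightarrow 0$ lies in $E$ is fine, and your observation that the Mayer--Vietoris inclusion $m\colon X\rightarrow Y\oplus Z$ is an $E$-monomorphism is correct and nicely direct: $\pi_Z\circ m=-j$ is an $E$-monic, so retractile monics applied to this composite force $m$ to be one as well. The problem is the last step, which you yourself identify as the main obstacle. To get from ``$Y\oplus Z\rightarrow P$ is an $E$-epimorphism'' (or directly from $\sigma\circ\bar{j}=p$) to ``$\sigma\colon P\rightarrow C$ is an $E$-epimorphism,'' you invoke the implication ``$g\circ f$ is an $E$-epic $\Rightarrow$ $g$ is an $E$-epic.'' That is precisely the \emph{sectile epics} condition of Def.~\ref{def of retractile monics}, and it is not among the hypotheses of the lemma, which assumes only retractile monics. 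Self-duality of the allowable-class axioms does not rescue this: it says that a theorem proved for classes with sectile epics dualizes to a theorem about classes with retractile monics \emph{in the opposite category}; it does not say that a single allowable class having retractile monics also has sectile epics. These are independent conditions on a fixed $E$, so as written the argument has a genuine gap, and the nine-lemma packaging does not supply the missing transfer of $E$-membership either.

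The paper closes this gap by testing against $E$-injectives rather than chasing epimorphisms: since $E$ has retractile monics, $E$-monics are exactly the maps $f$ for which $\hom_{\mathcal{C}}(f,I)$ is surjective for every $E$-injective $I$ (the dual of Heller's theorem~\ref{heller's theorem}; note this silently uses enough $E$-injectives). Applying $\hom_{\mathcal{C}}(-,I)$ turns the pushout square into a pullback square of abelian groups in which $\hom_{\mathcal{C}}(Z,I)\rightarrow\hom_{\mathcal{C}}(X,I)$ is surjective because $j$ is an $E$-monic, and a pullback of a surjection of abelian groups is a surjection (Lemma~\ref{pullback of surjections is surjective}); hence $\hom_{\mathcal{C}}(Y\coprod_X Z,I)\rightarrow\hom_{\mathcal{C}}(Y,I)$ is surjective for all $E$-injective $I$, and $\bar{i}$ is an $E$-monic. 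Equivalently, in your setup: any $E$-monic $Y\rightarrow I$ with $I$ $E$-injective extends over $P$ (extend $X\rightarrow Y\rightarrow I$ along the $E$-monic $j$ and use the universal property of the pushout), and then retractile monics applied to the composite $Y\rightarrow P\rightarrow I$ gives the conclusion. Either way, the fix runs through $E$-injectives, not through a sectile-epics argument.
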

\begin{proof}
Suppose $f: X\rightarrow Z$ is an $E$-monic and $X\rightarrow Y$ any morphism. 
We have the commutative diagram with exact rows
\[ \xymatrix{
0 \ar[r] & 
 X \ar[r]^f \ar[d] & 
 Z \ar[r]\ar[d] &
 \coker f \ar[r] \ar[d] &
 0 \ar[d] \\
 &
Y \ar[r] &
Y\coprod_X Z \ar[r] &
\coker f \ar[r] & 
 0 }\]
and hence, for every $E$-injective $I$, 
the induced commutative diagram
of abelian groups
\[ \xymatrix{
0 \ar[r] \ar[d] & 
 \hom_{\mathcal{C}}(\coker f, I) \ar[r] &
 \hom_{\mathcal{C}}(Z, I) \ar[r] &
 \hom_{\mathcal{C}}(X, I) \ar[r] &
 0 \\
0 \ar[r]  & 
 \hom_{\mathcal{C}}(\coker f, I) \ar[r]\ar[u]^{\cong} &
 \hom_{\mathcal{C}}(Y\coprod_X Z, I) \ar[r]\ar[u] &
 \hom_{\mathcal{C}}(Y, I). \ar[u] & }\]
Exactness of the top row follows from $f$
being an $E$-monic together with
$E$ having retractile monics, hence
$E$ is its own retractile closure, hence 
$E$-monics are precisely the
maps which induce a surjection after applying
$\hom_{\mathcal{C}}(- ,I)$ for every $E$-injective $I$.
Now in particular we have a commutative square
in the above commutative diagram:
\[ 
\xymatrix{
 \hom_{\mathcal{C}}(Z, I) \ar[r] &
 \hom_{\mathcal{C}}(X, I)  \\
 \hom_{\mathcal{C}}(Y\coprod_X Z, I) \ar[r]\ar[u] &
 \hom_{\mathcal{C}}(Y, I), \ar[u]
}\]
which is a pullback square of abelian groups,
by the universal property of the pushout.
The top map in the square is a surjection,
hence so is the bottom map, Lemma~\ref{pullback of surjections is surjective}.
So $\hom_{\mathcal{C}}(Y\coprod_X Z, I)
\rightarrow \hom_{\mathcal{C}}(Y, I)$
is a surjection for every $E$-injective $I$. Again since
$E$ is its own retractile closure, this implies
that $Y\rightarrow Y\coprod_X Z$ is an $E$-monic.
\end{proof}

We now define a weak form of the quasi-Frobenius condition that will allow us to factor maps into cofibrations followed by weak equivalences:
\begin{definition}\label{def of cone-frob}
Suppose $\mathcal{C}$ is a weak Waldhausen abelian category and $E$ a class allowable for $\mathcal{C}$. 
We say that $\mathcal{C}$ is {\em cone-Frobenius} if there exists a functor $J: \mathcal{C}\rightarrow\mathcal{C}$
and a natural transformation $\eta: \id_{\mathcal{C}}\rightarrow J$ such that:
\begin{enumerate}
  \item $J(X)$ is $E$-projective for every object $X$ of $\mathcal{C}$,
  \item $\eta(X): X\rightarrow J(X)$ is an $E$-monomorphism for every object $X$ of $\mathcal{C}$, and
  \item if $f: X\rightarrow Y$ is an $E$-monomorphism then so is $J(f): J(X)\rightarrow J(Y)$.
\end{enumerate}
We sometimes call the pair $J,\eta$ a {\em cone functor on $\mathcal{C}$.} 
\end{definition}
Here is an example of a cone functor: suppose $R$ is a Noetherian ring, and let $U$ be the injective envelope of the direct sum $\bigoplus R/I$, where $I$ ranges
across all right ideals of $R$. For each right $R$-module $M$, let 
\[ J(M) = \prod_{\hom_R(M, U)} U,\]
and let $\eta(M): M \rightarrow J(M)$ send $m$ to the map whose component in the factor corresponding to $f\in\hom_R(M, U)$ is $f(m)$.
This gives a functorial embedding of every $R$-module into an injective $R$-module, and (due to the characterizing property of an injective envelope)
$J$ sends monomorphisms to monomorphisms (this construction is due to Bass, and appears in Ex.~5.26 in Lam's book \cite{MR2278849}). If every injective $R$-module is projective, then $J$ is a cone functor. So the category of modules over any
Noetherian quasi-Frobenius ring is cone-Frobenius, for example. 
Similarly, the category of finitely-generated modules over any finite-dimensional quasi-Frobenius algebra over a finite field is also cone-Frobenius.

Now we will begin assuming our weak Waldhausen abelian category $\mathcal{C}$ is cone-Frobenius. 
The notation we will usually use is this: 
for any object $X$ of $\mathcal{C}$, we will write $i_X: X\rightarrow P_X$ for the chosen $E$-monomorphism from $X$ to an $E$-projective $P_X$.

The following lemma gives us conditions under which a homotopy pushout can be computed as the pushout of a diagram in which only {\em one} map is a cofibration,
rather than both maps. 
\begin{lemma}\label{simple pushouts}
Let $\mathcal{C}$ be a cone-Frobenius weak Waldhausen abelian category.  Let $E$ be a class allowable for $\mathcal{C}$.
Suppose $\mathcal{C}$ has enough $E$-projectives and enough $E$-injectives, 
suppose $E$ has sectile epics and retractile monics, 
suppose every $E$-projective object is $E$-injective.
Then any map $f: X\rightarrow Y$
can be factored as a composite
\begin{equation}\label{desired factorization} X \stackrel{f_0}{\longrightarrow} \tilde{Y} \stackrel{f_1}{\longrightarrow} Y,\end{equation}
where $f_0$ is a cofibration and $f_1$ is a weak equivalence, and furthermore,
the pushout of the diagram
\begin{equation}\label{misc pushout diag 1}\xymatrix{ X \ar[r]^{f_0} \ar[d]^{i_X} & \tilde{Y} \\ P_X & }\end{equation}
is weakly equivalent to the pushout of the diagram
\begin{equation}\label{misc pushout diag 2} \xymatrix{ X \ar[r]^{f} \ar[d]^{i_X} & Y \\ P_X . & }\end{equation}
\end{lemma}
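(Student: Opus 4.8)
The plan is to model the factorization on the algebraic mapping cylinder coming from the cone embedding $i_X\colon X\to P_X$, and then to recognize the asserted comparison of pushouts as a direct instance of Waldhausen's axiom {\bf Weq 2}, which already holds in $\mathcal{C}$ by Cor.~\ref{existence result on weq2} (whose hypotheses are precisely those assumed here, cone-Frobenius aside).

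First I would construct the factorization~\ref{desired factorization} by setting $\tilde{Y}=Y\oplus P_X$, taking $f_0\colon X\to Y\oplus P_X$ to be the map with components $f$ and $i_X$, and taking $f_1\colon Y\oplus P_X\to Y$ to be the projection onto the first summand. Then $f_1\circ f_0=f$ is immediate. That $f_0$ is a cofibration follows because the projection $\pi\colon Y\oplus P_X\to P_X$ satisfies $\pi\circ f_0=i_X$, and $i_X$ is an $E$-monomorphism by the cone-Frobenius hypothesis (Def.~\ref{def of cone-frob}); since $E$ has retractile monics, $f_0$ is therefore an $E$-monomorphism. That $f_1$ is a weak equivalence follows because the inclusion $g\colon Y\to Y\oplus P_X$ of the first summand is a section of $f_1$, and $\id_{\tilde Y}-g\circ f_1$ is the composite of the projection onto $P_X$ with the inclusion of $P_X$, hence factors through the $E$-projective $P_X$; thus $f_1$ is an $E$-stable equivalence.

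For the comparison of pushouts I would assemble the evident map of pushout diagrams $\phi$ from diagram~\ref{misc pushout diag 1} to diagram~\ref{misc pushout diag 2}, given by $\id_X$ on the apex $X$, by $\id_{P_X}$ on the corner $P_X$, and by $f_1\colon\tilde Y\to Y$ on the remaining corner. Commutativity of $\phi$ reduces to the identities $f_1\circ f_0=f$ and $\id_{P_X}\circ i_X=i_X$, both already checked. The point is now that $\phi$ is exactly a diagram of the shape occurring in {\bf Weq 2}: in both rows the leg $i_X\colon X\to P_X$ is a cofibration (occupying the role of the maps $Y'\to X'$ and $Y\to X$), while the three vertical maps $\id_X$, $\id_{P_X}$, and $f_1$ are all weak equivalences. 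Since $\mathcal{C}$ satisfies {\bf Weq 2} by Cor.~\ref{existence result on weq2}, the induced map on pushouts $\tilde Y\coprod_X P_X\to Y\coprod_X P_X$ is a weak equivalence, which is precisely the assertion that the pushout of diagram~\ref{misc pushout diag 1} is weakly equivalent to the pushout of diagram~\ref{misc pushout diag 2}.

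The main subtlety, and the one place the relative-homological hypotheses on $E$ enter in an essential way, is the verification that $f_0$ is a cofibration: here it is important to read off the factorization $\pi\circ f_0=i_X$ and invoke retractile monics, rather than attempt to exhibit $f_0$ as an $E$-monomorphism directly. The other point demanding care---purely organizational---is to designate $i_X$, not $f_0$, as the cofibration leg when matching $\phi$ against {\bf Weq 2}, so that the arbitrary map $f$ is permitted to occupy the opposite leg of the target diagram; once this is arranged, the entire pushout comparison is handed off to the previously established {\bf Weq 2}, so no independent diagram-chase or shearing computation is needed.
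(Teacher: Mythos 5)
Your proof is correct, and it reaches both conclusions by routes that are genuinely different from the paper's, though built on the same foundations. For the factorization, the paper takes the same $\tilde{Y}=P_X\oplus Y$ with the same $f_0$ and $f_1$, but it certifies that $f_0$ is a cofibration by writing it as the composite of the split monomorphism $X\rightarrow X\oplus Y$ (Lemma~\ref{split monos are in E}) with a shearing map $X\oplus Y\rightarrow P_X\oplus Y$ (Lemma~\ref{shearing E-monics}); your observation that $\pi\circ f_0=i_X$ is an $E$-monomorphism, so that retractile monics immediately forces $f_0$ to be one, is shorter and bypasses both lemmas. For the pushout comparison, the paper does not invoke {\bf Weq 2}: it splices the pushout squares over $X$ and over $P_X\oplus Y$ into a rectangle, notes that the middle vertical map $P_X\oplus Y\rightarrow P_X\coprod_X(P_X\oplus Y)$ is a cofibration by Lemma~\ref{when admissible monics are closed under pushout}, and then applies Lemma~\ref{left properness} to push the weak equivalence $f_1$ out along that cofibration. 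Your appeal to Cor.~\ref{existence result on weq2}, with $i_X$ correctly placed in the cofibration slot so that $f$ and $f_0$ may occupy the unconstrained legs, packages essentially the same ingredients (that corollary is itself deduced from Lemma~\ref{left properness} and Prop.~\ref{main prop on well-definedness of homotopy pushouts}), and there is no circularity since the corollary is established earlier and its hypotheses are among those assumed here. The upshot is the same either way; your version is more modular and delegates the diagram work to a previously proved axiom, while the paper's version re-runs the relevant special case of that argument by hand and is therefore self-contained at the level of the individual lemmas.
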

\begin{proof}
We let $\tilde{Y}$ be $P_X \oplus Y$, we let $f_0$ be given by the matrix of maps
\[ f_0 = \left[ \begin{array}{ll} i_X & f \end{array} \right] ,\]
and we let $f_1$ be the projection to the second summand. It is trivial to check that $f = f_1\circ f_0$.
That $f_1$ is a weak equivalence follows from its being a split epimorphism with $E$-projective kernel.
That $f_0$ is a cofibration follows from its being the composite of the inclusion in the first summand $X \rightarrow X\oplus Y$ 
(which is an $E$-monomorphism, hence cofibration, by Lemma~\ref{split monos are in E})
followed by the map  $X\oplus Y\rightarrow P_X\oplus Y$ given by the matrix of maps
\[ \left[ \begin{array}{ll} i_x & f \\ 0 & \id_Y \end{array} \right], \]
which is an $E$-monomorphism, hence cofibration, by Lemma~\ref{shearing E-monics}.
So $f_0$ is a composite of two cofibrations, hence itself a cofibration. So we have the desired factorization~\ref{desired factorization}.

Now we have the commutative diagram
\begin{equation}\label{yet another comm diagram}\xymatrix{ 
X\ar[d]^{i_X} \ar[r]^{f_0} & P_X\oplus Y\ar[d] \ar[r]^{f_1} & Y \ar[d] \\
P_X \ar[r] & P_X\coprod_{X}(P_X\oplus Y) \ar[r] & (P_X\coprod_X (P_X\oplus Y)) \coprod_{P_X\oplus Y} Y 
}\end{equation}
in which the two squares are pushout squares, hence the outer rectangle is a pushout diagram, i.e.,
we have a natural isomorphism
\[ \left(P_X\coprod_X (P_X\oplus Y)\right) \coprod_{P_X\oplus Y} Y \cong P_X\coprod_X Y.\]
In diagram~\ref{yet another comm diagram}, the maps $i_X$ and $f_0$ are cofibrations, hence the central vertical map is as well, by Lemma~\ref{when admissible monics are closed under pushout}.
Hence the bottom map
\[ P_X\coprod_X (P_X\oplus Y)\rightarrow P_X\coprod_X Y\]
is a pushout of a weak equivalence ($f_1$) along a cofibration (the central vertical map), hence itself a weak equivalence by Lemma~\ref{left properness}.
So we have a weak equivalence, as desired, between the pushout of the diagram~\ref{misc pushout diag 1} and the pushout of diagram~\ref{misc pushout diag 2}.
\end{proof}

Recall that an abelian category satisfies Grothendieck's axiom AB3 if it has arbitrary (small) colimits. Since any abelian category has coequalizers, axiom
AB3 is equivalent to having arbitrary (small) coproducts.
\begin{definition}\label{def of gr tower}
Suppose $\mathcal{C}$ is a cone-Frobenius weak Waldhausen abelian 
category satisfying Grothendieck's axiom AB3 as well as the assumptions of Lemma~\ref{simple pushouts}.
Let $F: \Delta^{\op}\rightarrow\mathcal{C}$ be
a simplicial object in $\mathcal{C}$.
We will write $F_n$ for the $n$th object of $F$ and
\[ d_i: F_n \rightarrow F_{n-1},\]
$i = 0, ... , n$, for the face maps of $F$.
We write $\Sigma X$ for the pushout of the diagram
\[\xymatrix{ X \ar[r]^{i_X} \ar[d]^{i_x} & P_X \\ P_X . & }\]

Then by a 
{\em geometric realization tower of $F$} we mean the diagram
$GR_F: \mathbb{N}\rightarrow \mathcal{C}$ defined inductively as follows:
\[ GR_F(0) = F_0\oplus \coprod_{n>0} P_{F_n},\]
and if $GR_F(i)$ has already been defined for $i=0, \dots , n-1$, we define $GR_F(n)$ as the pushout in the diagram
\begin{equation}\label{pushout defining GR} \xymatrix{ \Sigma^{n-1} (F_n\oplus\coprod_{i>n}P_{F_i}) \ar[d]^{\Sigma^{n-1}(i_{F_n}\oplus \id)} \ar[r]^{f_n} & GR_F(n-1)\ar[d]^{g_n} \\
\Sigma^{n-1}\left(\coprod_{i\geq n} P_{F_i} \right) \ar[r] & GR_F(n) }\end{equation}
where $f_n: \Sigma^{n-1} (F_n\oplus \coprod_{i > n} P_{F_i})  \rightarrow GR_F(n-1)$ is the map obtained from
the two nulhomotopies (i.e., factorizations through an $E$-projective) of the composite map
\[ \Sigma^{n-2} (F_n\oplus \coprod_{i>n}P_{F_i})  \stackrel{\Sigma^{n-2}d}{\longrightarrow} 
\Sigma^{n-2} (F_{n-1}\oplus \coprod_{i>n-1} P_{F_i}) \stackrel{f_{n-1}}{\longrightarrow}
 GR_F(n-2) \stackrel{g_{n-1}}{\longrightarrow} GR_F(n-1) , \]
where we write \[ d: F_n\oplus \coprod_{i>n}P_{F_i}\rightarrow F_{n-1}\oplus \coprod_{i>n-1}P_{F_i}\]  
for the map given on the summands $F_n, P_{F_{n+1}}, P_{F_{n+2}}, \dots$ of its domain as follows:
\begin{itemize}
\item
on the summand $F_n$ of its domain, it is the alternating sum \[ d_0 - d_1 + d_2 - \dots + (-1)^{n-1} d_{n-1}: F_n\rightarrow F_{n-1}\]
plus the map $i_{F_n}: F_n \rightarrow P_{F_n}$,
\item
and on each summand $P_{F_i}$ for $i>n$, it is simply the inclusion of the summand $P_{F_i}$ into the codomain.
\end{itemize}

Finally, by the {\em geometric realization of $F$} we mean the colimit $\colim GR_F$ of the geometric realization tower of $F$.
\end{definition}
Here is a simple way of describing the maps $d: F_n\oplus \coprod_{i>n}P_{F_i}\rightarrow F_{n-1}\oplus \coprod_{i>n-1}P_{F_i}$ appearing
in Def.~\ref{def of gr tower}: if one applies the factorization~\ref{desired factorization} from Lemma~\ref{simple pushouts}
to the alternating sum map $d_0 - d_1 + d_2 - \dots + (-1)^{n-1} d_{n-1}: F_n\rightarrow F_{n-1}$,
one gets a cofibration $f_0: F_n \rightarrow F_{n-1}\oplus P_{F_n}$. The map $d$ is simply the direct sum of $f_0$ with the identity map on all 
the $E$-projectives $P_{F_i}$ that will appear later on in the geometric realization tower.
Note that, if $P$ is $E$-projective, then so is $\Sigma P$, since the assumptions made in Def.~\ref{def of gr tower} imply
that homotopy pushouts are well-defined in $\mathcal{C}$ due to Prop.~\ref{main prop on well-definedness of homotopy pushouts}, which
in turn implies that a homotopy pushout of $E$-projective objects is $E$-projective, by 
Prop.~\ref{existence of htpy colimits implies projectivity of colimits of projectives}. Furthermore, the cone-Frobenius assumption
implies that $i_{F_n}$ being a cofibration forces $\Sigma i_{F_n}$ to be a cofibration.
So the left-hand vertical map in square~\ref{pushout defining GR}
really is an $E$-monomorphism into an $E$-projective object. So by Lemma~\ref{simple pushouts}, square~\ref{pushout defining GR} is computing a homotopy pushout.

\begin{lemma}
Under the assumptions made in Def.~\ref{def of gr tower}, making different choices of the cone functor, in particular the 
$E$-projective objects $P_X$ and cofibrations $i_X$, does not change the $E$-stable equivalence type
of each object $GR_F(n)$ in the geometric realization tower.
\end{lemma}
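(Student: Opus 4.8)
The plan is to argue by induction on $n$, leaning on the observation (recorded in the remark following Def.~\ref{def of gr tower}) that each square~\ref{pushout defining GR} computes a homotopy pushout, together with the fact that homotopy pushouts are unique up to $E$-stable equivalence, by Prop.~\ref{main prop on well-definedness of homotopy pushouts}. Fix two cone functors $(J,\eta)$ and $(J',\eta')$, with associated $E$-projectives $P_X, P_X'$, cofibrations $i_X, i_X'$, and suspension functors $\Sigma, \Sigma'$; write $GR_F$ and $GR_F'$ for the two towers. The induction should produce at each stage a genuine natural transformation $\gamma_n \colon GR_F(n)\to GR_F'(n)$ that is an $E$-stable equivalence.

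Two elementary facts drive the construction. First, a coproduct of $E$-projectives is again $E$-projective, since $\hom_{\mathcal{C}}(\coprod_i P_i,-)\cong\prod_i\hom_{\mathcal{C}}(P_i,-)$ carries $E$-epimorphisms to surjections; combined with the fact that $\Sigma$ preserves $E$-projectivity, the corner objects $\Sigma^{n-1}(\coprod_{i\geq n}P_{F_i})$ are $E$-projective, hence stably trivial. Second, any map that is the identity on a non-projective summand and arbitrary on the $E$-projective summands---for instance $\id_{F_n}\oplus 0 \colon F_n\oplus\coprod_{i>n}P_{F_i}\to F_n\oplus\coprod_{i>n}P_{F_i}'$---induces an isomorphism on every $\Ext^1_{\mathcal{C}/E}(-,M)$ (the projective summands contribute nothing, by Lemma~\ref{easy lemma on nonvanishing of Ext^1}), hence is an $E$-stable equivalence by Lemma~\ref{stable hom and nat trans of ext1}; and applying well-definedness of homotopy pushouts repeatedly, $\Sigma^{n-1}$ sends $E$-stable equivalences to $E$-stable equivalences. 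Crucially, the hypothesis that every $E$-projective is $E$-injective lets me compare the two suspension functors: given $i_X'\colon X\to P_X'$ with $P_X'$ an $E$-injective, it extends over the $E$-monomorphism $i_X$ to a map $\phi\colon P_X\to P_X'$ with $\phi\circ i_X=i_X'$, and the triple $(\phi,\id_X,\phi)$ is then a map of cone diagrams inducing an $E$-stable equivalence $\Sigma X\to\Sigma' X$ by Prop.~\ref{main prop on well-definedness of homotopy pushouts}.

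For the base case, $GR_F(0)=F_0\oplus\coprod_{n>0}P_{F_n}$ and $\id_{F_0}\oplus 0$ is the required $E$-stable equivalence. For the inductive step I would assemble the comparison of the two pushout squares~\ref{pushout defining GR}: the map $\gamma_{n-1}$ comes from the inductive hypothesis, while the corner maps $\alpha_n$ on $\Sigma^{n-1}(F_n\oplus\coprod_{i>n}P_{F_i})$ and $\beta_n$ on $\Sigma^{n-1}(\coprod_{i\geq n}P_{F_i})$ are built by combining $\id_{F_n}\oplus 0$ with the $\Sigma$-versus-$\Sigma'$ comparison of the previous paragraph; each is an $E$-stable equivalence. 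Since the left-hand leg $\Sigma^{n-1}(i_{F_n}\oplus\id)$ is a cofibration, once both squares commute the induced map $\gamma_n\colon GR_F(n)\to GR_F'(n)$ on pushouts is an $E$-stable equivalence by Prop.~\ref{main prop on well-definedness of homotopy pushouts}.

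The main obstacle is making the right-hand square commute, namely $f_n'\circ\alpha_n=\gamma_{n-1}\circ f_n$, because $f_n$ is assembled from chosen nullhomotopies of a composite built from the face maps $d_i$ and the stage-$(n-1)$ data, and both the cones and these nullhomotopies depend on the choices. I would handle this in two moves. First, because the composite defining $f_n$ is built functorially from the face maps (which are independent of the cone functor) and from data compatible with $\gamma_{n-1}$ and $\alpha_n$ by induction, I can use the $E$-injectivity of the projective cones to transport the unprimed nullhomotopy along the comparison maps, obtaining a primed nullhomotopy for which the square commutes by construction. Second, to reach the \emph{actual} primed tower, I observe that any two admissible choices of $f_n'$ differ by a map factoring through an $E$-projective, hence induce the same map on $\Ext^{\geq 1}_{\mathcal{C}/E}$; by the long exact sequences associated to the two pushouts together with Lemma~\ref{ext2 equiv implies ext1 equiv}, and the flexibility already encoded in Lemma~\ref{simple pushouts}, such a change leaves the $E$-stable equivalence type of the pushout unaltered. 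Combining the two moves completes the induction.
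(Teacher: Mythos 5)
Your argument reaches the right conclusion but by a genuinely different route from the paper's. The paper runs no induction up the tower: it isolates the two primitive operations from which $GR_F$ is assembled---homotopy cofibers of cofibrations, and the factorization of Lemma~\ref{simple pushouts}---and shows each is independent of the choice of $(i_X,P_X)$ by exhibiting a single ``spanning'' pushout diagram (corner $P_X\oplus P_X'\oplus Y$, attaching map $[\,i_X\ \ i_X'\ \ f\,]$) that maps onto each of the two candidate diagrams by a split epimorphism with $E$-projective kernel; Lemmas~\ref{when admissible monics are closed under pushout} and~\ref{left properness} then give weak equivalences of pushouts, and the proof closes by remarking that $GR_F$ is built entirely from these two operations. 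Your proof instead constructs an explicit levelwise comparison $\gamma_n\colon GR_F(n)\to GR_F'(n)$ by induction, using the hypothesis that $E$-projectives are $E$-injective to extend $i_X'$ over the $E$-monomorphism $i_X$ and so obtain a map of cone diagrams $\phi\colon P_X\to P_X'$. That device does not appear in the paper, and it buys you something the paper's zig-zag does not: an honest natural transformation of towers, which is exactly the input format required by Prop.~\ref{sequential colimits are well-defined}. Both proofs share the same soft spot---the dependence of $f_n$ on the chosen nulhomotopies---and you are right to single it out as the main obstacle; the paper essentially waves at it.

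One step of yours needs repair. In your second ``move'' you cannot get what you want from Lemma~\ref{ext2 equiv implies ext1 equiv} merely by observing that two admissible choices of $f_n'$ induce the same maps on $\Ext^{\geq 1}_{\mathcal{C}/E}$: that lemma takes as input an actual morphism of $\mathcal{C}$ inducing a natural isomorphism on $\Ext^2_{\mathcal{C}/E}$, and abstract agreement of two long exact sequences does not by itself produce a morphism between the two pushouts. The claim is nevertheless true and provable with the paper's tools: if the attaching maps $f,g\colon A\to B$ satisfy $f-g=p\circ q$ with $q\colon A\to P$ and $P$ an $E$-projective, first replace $B$ by $B\oplus P$ with attaching maps $(f,q)$ and $(g,q)$ respectively (the projection $B\oplus P\to B$ is a weak equivalence with $E$-projective kernel, so Prop.~\ref{main prop on well-definedness of homotopy pushouts} identifies the enlarged pushouts with the original ones), and then observe that the shearing automorphism $\left[\begin{smallmatrix}\id_B & p\\ 0 & \id_P\end{smallmatrix}\right]$ of $B\oplus P$ carries the diagram attached along $(g,q)$ isomorphically onto the one attached along $(f,q)$. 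With that substitution your induction closes.
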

\begin{proof}
Suppose we have two choices $P_X,P_X^{\prime}$ of $E$-projective object and two choices of cofibration, 
$i_X: X\rightarrow P_X$ and $i_X^{\prime}: X\rightarrow P_X^{\prime}$. For any cofibration $f: X\rightarrow Y$, we have the two pushout diagrams
\[ \xymatrix{ X \ar[r]^{i_X} \ar[d]^{f} & P_X \\ Y } \mbox{\ and\ } \xymatrix{ X \ar[r]^{i_X^{\prime}} \ar[d]^{f} & P_X^{\prime} \\ Y } .\]
By Lemma~\ref{simple pushouts}, the pushout of each diagram is $E$-stably equivalent to the pushout of the diagram 
\[  \xymatrix{ X \ar[r]^{} \ar[d]^{f} & 0 \\ Y .} \]
So the $E$-stable equivalence type of the homotopy cofiber of a cofibration doesn't depend on the choice of $i_x,P_X$.

Now we handle the dependence of the factorization~\ref{desired factorization} on the choices of $i_X,P_X$. 
We have the two pushout diagrams
\begin{equation}\label{child pushout diagrams} \xymatrix{ X \ar[r]^{i_X} \ar[d]^{f_0} & P_X \\ P_X\oplus Y } \mbox{\ and\ } \xymatrix{ X \ar[r]^{i_X} \ar[d]^{f_0^{\prime}} & P_X \\ P_X^{\prime}\oplus Y } \end{equation}
arising from factorization~\ref{desired factorization}, and we want to know that the pushouts of these two diagrams
are $E$-stably equivalent. We accomplish this with the pushout diagram
\begin{equation}\label{spanning pushout diagram} \xymatrix{ X \ar[r]^{i_X} \ar[d]^{m} & P_X \\ P_X\oplus P_X^{\prime}\oplus Y } \end{equation}
where $m$ is given the matrix of maps
\[ m = \left[ \begin{array}{lll} i_X & i_X^{\prime} & f \end{array} \right] ,\]
and is a cofibration by Lemma~\ref{shearing E-monics}.
We note that diagram~\ref{spanning pushout diagram} maps to each of the two diagrams~\ref{child pushout diagrams}; we handle the map to the left-hand 
diagram~\ref{child pushout diagrams}, and the map to the right-hand diagram is handled similarly.
Since $i_X, f_0,$ and $m$ are all cofibrations, Lemma~\ref{when admissible monics are closed under pushout} implies that, in the pushout square
\[ \xymatrix{ P_X\oplus P_X^{\prime} \oplus Y \ar[d]\ar[r] & P_X\coprod_X (P_X\oplus P_X^{\prime} \oplus Y) \ar[d] \\
P_X\oplus Y \ar[r] & P_X\coprod_X (P_X\oplus Y) ,}\]
the top horizontal map is a cofibration. The left-hand vertical map is a weak equivalence, since it is a split epimorphism with $E$-projective kernel; 
so by Lemma~\ref{left properness}, the right-hand vertical map is also a weak equivalence.
A similar argument holds for the right-hand pushout diagram~\ref{child pushout diagrams}, so since the pushouts of the two 
pushout diagrams in \ref{child pushout diagrams} are each $E$-stably equivalent to the pushout of diagram~\ref{spanning pushout diagram}, 
the two pushout diagrams in \ref{child pushout diagrams} are $E$-stably equivalent to one another. So the choice of $i_X,P_X$ used in the construction of the 
factorization~\ref{desired factorization} doesn't affect the $E$-stable equivalence type of the resulting homotopy pushouts.

Finally, since $GR_F$ is constructed entirely from these two operations (factorizations as in~\ref{desired factorization} and homotopy cofibers of cofibrations),
up to levelwise $E$-stable equivalence, $GR_F$ does not depend on the choices of $i_X,P_X$.
\end{proof}

\begin{prop}\label{levelwise weak eq induces levelwise weak eq of gr towers}
Let $\mathcal{C}$ be a cone-Frobenius weak Waldhausen abelian category.  Let $E$ be a class allowable for $\mathcal{C}$.
Suppose $\mathcal{C}$ has enough $E$-projectives and enough $E$-injectives, 
suppose $E$ has sectile epics and retractile monics, 
suppose every $E$-projective object is $E$-injective.
Then, for any simplicial object $F: \Delta^{\op}\rightarrow\mathcal{C}$ in $\mathcal{C}$, the geometric realization tower
$GR_F: \mathbb{N}\rightarrow\mathcal{C}$ is a homotopy colimit diagram.
Furthermore, if $F,G: \Delta^{\op}\rightarrow\mathcal{C}$ are two simplicial objects and $\phi: F \rightarrow G$ a levelwise weak equivalence,
then the induced natural transformation $GR_{\phi}: GR_F \rightarrow GR_G$ has the property that, for every natural number $n$, the map
$GR_{\phi}(n): GR_F(n)\rightarrow GR_G(n)$ is a weak equivalence.
\end{prop}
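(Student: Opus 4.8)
The plan is to prove the two assertions in turn: that $GR_F$ is a homotopy colimit diagram by a direct check, and that $GR_\phi$ is a levelwise weak equivalence by induction on $n$ built on Waldhausen's axiom {\bf Weq 2} (Cor.~\ref{existence result on weq2}). Throughout I use the standing assumptions of Def.~\ref{def of gr tower}, under which $GR_F$ and $GR_G$ are defined.

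First I would check Def.~\ref{def of homotopy colimit diagram}. The only nonidentity structure maps of the $\mathbb{N}$-indexed diagram $GR_F$ are the maps $g_n\colon GR_F(n-1)\to GR_F(n)$ in the defining pushout square~\ref{pushout defining GR}. As recorded in the discussion following Def.~\ref{def of gr tower}, the left-hand vertical map $\Sigma^{n-1}(i_{F_n}\oplus\id)$ of that square is a cofibration (an $E$-monomorphism) whose target is $E$-projective. Since $E$-monics are closed under pushout by Lemma~\ref{when admissible monics are closed under pushout}, each $g_n$ is a cofibration; and each map $0\to GR_F(n)$ is a split monomorphism, hence a cofibration by Lemma~\ref{split monos are in E}. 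Thus $GR_F$ is a homotopy colimit diagram.

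For the second assertion I would record two facts and then induct. (i) Any map between $E$-projectives is a weak equivalence (its induced transformation on $\Ext^1_{\mathcal{C}/E}$ is $0\to 0$, so Lemma~\ref{stable hom and nat trans of ext1} applies), a coproduct of $E$-projectives is $E$-projective (Lemma~\ref{easy lemma on nonvanishing of Ext^1}, since $\Ext^1_{\mathcal{C}/E}$ turns coproducts in the first variable into products), and a finite direct sum of weak equivalences is a weak equivalence (again by Lemma~\ref{stable hom and nat trans of ext1}). (ii) Each functor $\Sigma^m$ preserves weak equivalences: $\Sigma X$ is the homotopy pushout of $P_X\leftarrow X\to P_X$, so a weak equivalence $X\to X'$ yields, by naturality of the cone functor $\eta$ of Def.~\ref{def of cone-frob}, a map of spans with weak-equivalence legs and cofibration structure maps, whence Cor.~\ref{existence result on weq2} makes $\Sigma X\to\Sigma X'$ a weak equivalence; iterate. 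With (i) in hand the base case is immediate, since $GR_\phi(0)$ is the direct sum of the weak equivalence $\phi_0$ with the map $\coprod_{n>0}J(\phi_n)$ of $E$-projectives. For the inductive step, assume $GR_\phi(n-1)$ is a weak equivalence, and compare the pushout squares~\ref{pushout defining GR} for $F$ and $G$ through the evident map between their gluing spans $GR_F(n-1)\stackrel{f_n}{\longleftarrow}\Sigma^{n-1}(F_n\oplus\coprod_{i>n}P_{F_i})\stackrel{\Sigma^{n-1}(i_{F_n}\oplus\id)}{\longrightarrow}\Sigma^{n-1}(\coprod_{i\geq n}P_{F_i})$. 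The comparison map on the two $\Sigma^{n-1}$-terms is $\Sigma^{n-1}$ applied to $\phi_n\oplus\coprod J(\phi_i)$ and to $\coprod J(\phi_i)$ respectively, both weak equivalences by (i) and (ii) (the latter even a map of $E$-projectives), while the comparison map on $GR_F(n-1)$ is $GR_\phi(n-1)$. Since the structure maps $\Sigma^{n-1}(i_{F_n}\oplus\id)$ are cofibrations, {\bf Weq 2} (Cor.~\ref{existence result on weq2}) forces the induced map $GR_F(n)\to GR_G(n)$ to be a weak equivalence, closing the induction.

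The main obstacle is the commutativity of the square relating the gluing maps $f_n$ for $F$ and for $G$, which {\bf Weq 2} requires on the nose: each $f_n$ is assembled from choices of nulhomotopy (factorizations through $E$-projectives), so the identity $GR_\phi(n-1)\circ f_n^F = f_n^G\circ\Sigma^{n-1}(\phi_n\oplus\coprod J(\phi_i))$ is not formal. I would handle this by building $GR_\phi$ and the $G$-nulhomotopies together with the induction: since $\phi$ carries the $F$-composite being nulhomotoped to the corresponding $G$-composite and the relevant sources are $\Sigma$-powers of $E$-projectives, the already-constructed $GR_\phi(n-1)$ transports the chosen $F$-nulhomotopy to a compatible $G$-nulhomotopy, making the square commute strictly. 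The preceding lemma, that the $E$-stable equivalence type of each $GR(n)$ is independent of these cone-functor choices, ensures that this compatible choice loses no generality.
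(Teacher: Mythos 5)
Your proof is correct and follows essentially the same route as the paper's: the first assertion via closure of $E$-monics under pushout (Lemma~\ref{when admissible monics are closed under pushout}), and the second via the gluing machinery for pushouts of weak equivalences along cofibrations (the paper invokes Lemma~\ref{left properness} directly where you invoke its consequence Cor.~\ref{existence result on weq2}, but this is the same mechanism). The one substantive difference is that the paper's proof of the second assertion is a single sentence asserting that the maps $GR_{\phi}(n)$ are ``by construction'' pushouts of weak equivalences along cofibrations, whereas you make the induction explicit, verify that $\Sigma^{n-1}$ preserves weak equivalences, and---most usefully---flag and patch the need to choose the nulhomotopies defining $f_n^F$ and $f_n^G$ compatibly so that the comparison squares commute on the nose, a point the paper passes over in silence.
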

\begin{proof}
First we check that $GR_F$ is a homotopy pushout diagram. All we need to check is that, for every natural number $n$, the map
$GR_F(n)\rightarrow GR_F(n+1)$ is a cofibration. But this map is the pushout of diagram~\ref{pushout defining GR},
which is the pushout of a cofibration along a cofibration, hence itself a cofibration by Lemma~\ref{when admissible monics are closed under pushout}.

Now suppose $F,G,\phi$ are as in the statement of the proposition. The maps $GR_{\phi}(n): GR_F(n)\rightarrow GR_G(n)$ are, by construction,
pushouts of weak equivalences along cofibrations, hence by Lemma~\ref{left properness}, they are themselves weak equivalences.
\end{proof}

\begin{corollary}\label{geometric realization is well-defined}
Let $\mathcal{C},E$ be as in Def.~\ref{def of gr tower}, and suppose that a sequential colimit
of $E$-long exact sequences in $\mathcal{C}$ is $E$-long exact.
Then geometric realization of simplicial objects in $\mathcal{C}$ is well defined. That is, if we have two simplicial objects $F,G: \Delta^{\op}\rightarrow \mathcal{C}$ in $\mathcal{C}$ and a natural transformation $\phi:F \rightarrow G$ such that $\phi(n): F(n)\rightarrow G(n)$ is a weak equivalence for every natural 
number $n$, then the induced map of geometric realizations $\colim GR_F \rightarrow \colim GR_G$ is a weak equivalence.\end{corollary}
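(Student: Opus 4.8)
The plan is to deduce this directly from the two preceding results: Prop.~\ref{levelwise weak eq induces levelwise weak eq of gr towers}, which produces the relevant sequential diagrams and a levelwise weak equivalence between them, and Prop.~\ref{sequential colimits are well-defined}, which guarantees that passing to the colimit of such data preserves weak equivalences. Since the geometric realization of a simplicial object is by definition the colimit $\colim GR_F$ of its geometric realization tower $GR_F : \mathbb{N} \to \mathcal{C}$, and the induced map of realizations is $\colim GR_{\phi}$, the corollary is exactly the statement that sequential homotopy colimits are well-defined, applied to the triple $GR_F, GR_G, GR_{\phi}$.

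First I would invoke Prop.~\ref{levelwise weak eq induces levelwise weak eq of gr towers}. Its first conclusion is that $GR_F$ is a homotopy colimit diagram for every simplicial object $F$; concretely, each structure map $GR_F(n) \to GR_F(n+1)$ is a cofibration, and the condition that $0 \to GR_F(n)$ be a cofibration is automatic by Lemma~\ref{split monos are in E} (the remark following Def.~\ref{def of homotopy colimit diagram}), since $E$ has sectile epics. Thus $GR_F$ and $GR_G$ are genuine $\mathbb{N}$-indexed homotopy colimit diagrams. Its second conclusion, applied to the hypothesized levelwise weak equivalence $\phi : F \to G$, is that the induced natural transformation $GR_{\phi} : GR_F \to GR_G$ satisfies that $GR_{\phi}(n) : GR_F(n) \to GR_G(n)$ is a weak equivalence for every $n \in \mathbb{N}$. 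This is precisely the input required by the definition of well-definedness of sequential homotopy colimits.

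Then I would check that the hypotheses of Cor.~\ref{geometric realization is well-defined}, inherited through Def.~\ref{def of gr tower} together with the added assumption that a sequential colimit of $E$-long exact sequences is $E$-long exact, match the hypotheses of Prop.~\ref{sequential colimits are well-defined} verbatim: enough $E$-projectives and enough $E$-injectives, $E$ having sectile epics and retractile monics, every $E$-projective being $E$-injective, and the sequential-exactness condition. Applying Prop.~\ref{sequential colimits are well-defined} to the diagrams $GR_F, GR_G$ and the map $GR_{\phi}$ then yields that the induced map $\colim GR_F \to \colim GR_G$ is a weak equivalence, which is the desired conclusion.

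The point worth emphasizing is that there is no genuinely new obstacle here; all of the real work has already been done upstream. Prop.~\ref{levelwise weak eq induces levelwise weak eq of gr towers} is what organizes geometric realization into a sequence of homotopy pushouts (using the cone-Frobenius structure, axiom AB3, and left properness, i.e.\ Lemma~\ref{left properness}), while Prop.~\ref{sequential colimits are well-defined} is where the hypothesis that every $E$-projective is $E$-injective, together with the sequential-exactness assumption, is used to pass from a levelwise weak equivalence to a weak equivalence of colimits. The only thing to be careful about is that one computes both $GR_F$ and $GR_G$ using the same cone functor, so that $GR_{\phi}$ is defined; the preceding lemma shows that this choice does not affect the $E$-stable equivalence type of the terms of the tower, so no generality is lost.
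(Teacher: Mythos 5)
Your proposal is correct and follows exactly the paper's own argument: apply Prop.~\ref{levelwise weak eq induces levelwise weak eq of gr towers} to get a levelwise weak equivalence of homotopy colimit diagrams $GR_{\phi}: GR_F\rightarrow GR_G$, then apply Prop.~\ref{sequential colimits are well-defined} to conclude. The extra remarks on hypothesis-matching and the choice of cone functor are sound but not needed beyond what the paper records.
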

\begin{proof}
By Prop.~\ref{levelwise weak eq induces levelwise weak eq of gr towers}, $\phi$ induces a natural transformation $GR_{\phi}: GR_F\rightarrow GR_G$ of 
homotopy colimit diagrams which is a levelwise weak equivalence. Then by Prop.~\ref{sequential colimits are well-defined}, $GR_{\phi}$ induces
a weak equivalence of colimits $\colim GR_F \rightarrow \colim GR_G$.
\end{proof}

\section{Appendix on basic notions of relative homological algebra.}

Here is an appendix on some ideas in relative homological algebra. Since this subject is a little bit obscure, some readers might find the definitions (which are
all classical, except for Def.~\ref{def of retractile monics}) in this appendix helpful. 

\begin{definition} \label{def of allowable class}
An {\em allowable class in $\mathcal{C}$} consists of a collection 
$E$ of short exact sequences in $\mathcal{C}$ which is closed under isomorphism
of short exact sequences and which contains every short exact sequence
in which at least one object is the zero object of $\mathcal{C}$.
(See section IX.4 of \cite{MR1344215} for this definition
and basic properties.)
\end{definition}
The usual ``absolute'' homological algebra in an abelian category 
$\mathcal{C}$ is recovered by letting the allowable class $E$ consist
of {\em all} short exact sequences in $\mathcal{C}$.

Once one chooses an allowable class $E$, one has the notion of monomorphisms
relative to $E$, or ``$E$-monomorphisms,'' and epimorphisms relative to $E$,
or ``$E$-epimorphisms.''
\begin{definition}
Let $E$ be an allowable class in $\mathcal{C}$.
A monomorphism $f: M\rightarrow N$ 
in $\mathcal{C}$ is called an {\em $E$-monomorphism} or an {\em $E$-monic}
if the short exact sequence
\[ 0\rightarrow M\stackrel{f}{\longrightarrow} N \rightarrow \coker f\rightarrow 0\]
is in $E$.

Dually, an epimorphism $g: M\rightarrow N$ is called an 
{\em $E$-epimorphism} or an {\em $E$-epic} if the short exact sequence
\[ 0\rightarrow \ker f\rightarrow M\stackrel{f}{\longrightarrow} N \rightarrow 0\]
is in $E$.\end{definition}
In the absolute case, the case that $E$ is all short exact sequences in $\mathcal{C}$,
the $E$-monomorphisms are simply the monomorphisms, and the $E$-epimorphisms
are simply the epimorphisms.

Projective and injective objects are at the heart of homological algebra. In 
relative homological algebra, one has the notion of relative projectives, or
$E$-projectives: these are simply the objects which lift over every 
$E$-epimorphism. The $E$-injectives are defined dually.
\begin{definition}
Let $E$ be an allowable class in $\mathcal{C}$.
An object $X$ of $\mathcal{C}$ is said to be an {\em $E$-projective}
if, for every diagram
\[ \xymatrix{ & X \ar[d]  \\
 M \ar[r]^f & N }\]
in which $f$ is an $E$-epic,
there exists a morphism $X\rightarrow M$ making the above diagram commute.

Dually, 
an object $X$ of $\mathcal{C}$ is said to be an {\em $E$-injective}
if, for every diagram
\[ \xymatrix{ M\ar[r]^f\ar[d] & N   \\
X & }\]
in which $f$ is an $E$-monic,
there exists a morphism $N\rightarrow X$ making the above diagram commute.

When the allowable class $E$ is clear from context we sometimes refer to $E$-projectives
and $E$-injectives as {\em relative projectives} and {\em relative injectives,} respectively.
\end{definition}
In the absolute case, the case that $E$ is all short exact sequences 
in $\mathcal{C}$, the $E$-projectives are simply the projectives, and 
the $E$-injectives are simply the injectives.

Once one has a notion of relative projectives, one has a reasonable notion of
a stable equivalence or, loosely, a ``homotopy'' between maps, as studied
(usually in the absolute case, where $E$-projectives are simply projectives) in stable representation theory:
\begin{definition}
Let $E$ be an allowable class in $\mathcal{C}$.
Let $f,g: M\rightarrow N$ be morphisms in $\mathcal{C}$.
We say that $f$ and $g$ are {\em $E$-stably equivalent}
and we write $f\simeq g$
if $f-g$ factors through an $E$-projective object of 
$\mathcal{C}$. 
\end{definition}

One then has the notion of stable equivalence of objects, or loosely,
``homotopy equivalence'':
\begin{definition}\label{def of stable equivalence}
We say that a map $f: M\rightarrow N$ is a
{\em $E$-stable equivalence} if there exists a map
$h: N\rightarrow M$ such that
$f\circ h\simeq \id_N$ and $h\circ f\simeq \id_M$.
\end{definition}
In the absolute case where $E$ consists of all short exact
sequences in $\mathcal{C}$, this is the usual notion of stable
equivalence of modules over a ring. 
Over a Hopf algebra over a field,
stably equivalent modules have isomorphic cohomology in 
positive degrees, so if one is serious about computing
the cohomology of all finitely-generated modules over a
particular Hopf algebra---such as the Steenrod algebra or
the group ring of a Morava stabilizer group---it 
is natural to first compute the 
representation ring modulo stable equivalence.

Here is a new definition which makes many arguments involving allowable classes substantially smoother:
\begin{definition} \label{def of retractile monics}
An allowable class $E$ is said to 
{\em have retractile monics}
if, whenever $g\circ f$ is an $E$-monic, $f$ is also an $E$-monic.

Dually, an allowable class $E$ is said to 
{\em have sectile epics}
if, whenever $g\circ f$ is an $E$-epic, $g$ is also an $E$-epic.
\end{definition}

The utility of the notion of ``having sectile epics'' comes from the following fundamental theorem of
relative homological algebra, due to Heller (see \cite{MR1344215}), whose statement is slightly cleaner is one is willing to use the phrase ``having sectile epics.'' The consequence
of Heller's theorem is that, in order to specify a ``reasonable'' allowable class
in an abelian category, it suffices to specify the relative projective objects
associated to it.
\begin{theorem} {\bf (Heller.)}\label{heller's theorem}
If $\mathcal{C}$ is an abelian category and $E$ is an allowable class in $\mathcal{C}$ with sectile epics and enough $E$-projectives,
then an epimorphism $M\rightarrow N$ in $\mathcal{C}$ is an $E$-epic if and
only if the induced map $\hom_{\mathcal{C}}(P, M)\rightarrow 
\hom_{\mathcal{C}}(P, N)$ of abelian groups is an epimorphism for all
$E$-projectives $P$.
\end{theorem}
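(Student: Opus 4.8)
The plan is to prove the two implications separately, with the forward direction being essentially a restatement of the defining property of $E$-projective objects, and the reverse direction being the one that genuinely uses both hypotheses (enough $E$-projectives, and sectile epics).

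For the forward direction, I would suppose $f: M\rightarrow N$ is an $E$-epic, so that the short exact sequence $0\rightarrow\ker f\rightarrow M\rightarrow N\rightarrow 0$ lies in $E$. Given any $E$-projective $P$ and any map $g: P\rightarrow N$, the lifting property in the definition of an $E$-projective, applied to the $E$-epic $f$, produces a map $\tilde g: P\rightarrow M$ with $f\circ\tilde g = g$. Since $g$ was arbitrary, the induced map $\hom_{\mathcal{C}}(P,M)\rightarrow\hom_{\mathcal{C}}(P,N)$ is surjective, and this holds for every $E$-projective $P$. This direction is immediate and uses none of the structural hypotheses beyond the definitions.

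For the reverse direction, I would suppose $f: M\rightarrow N$ is an epimorphism inducing a surjection $\hom_{\mathcal{C}}(P,M)\rightarrow\hom_{\mathcal{C}}(P,N)$ for every $E$-projective $P$. Here I would invoke the hypothesis that $\mathcal{C}$ has enough $E$-projectives to choose an $E$-projective $P$ together with an $E$-epimorphism $\pi: P\rightarrow N$. By the surjectivity hypothesis applied to this particular $P$, the map $\pi$ lifts through $f$, yielding $\tilde\pi: P\rightarrow M$ with $f\circ\tilde\pi = \pi$. Now the composite $f\circ\tilde\pi = \pi$ is an $E$-epic by construction, so the assumption that $E$ has sectile epics (Def.~\ref{def of retractile monics}), which says precisely that $g\circ h$ being an $E$-epic forces $g$ to be an $E$-epic, immediately gives that $f$ is an $E$-epic, as desired.

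The argument is short, so there is no serious technical obstacle; the only real content is recognizing which two hypotheses to combine and in what order. The essential point to get right is that one must manufacture the comparison $E$-epimorphism $\pi$ out of an $E$-projective (the sole use of ``enough $E$-projectives''), lift it using the given hom-surjectivity, and then transport the $E$-epic property down to $f$ via sectile epics (the sole use of that axiom). I would also note in passing that the standing assumption that $f$ is an epimorphism is in fact redundant for the reverse implication: since $\pi = f\circ\tilde\pi$ is in particular an epimorphism, $f$ is automatically epic, and the conclusion that $f$ is an $E$-epic subsumes this.
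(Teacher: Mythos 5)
Your proof is correct. The paper itself does not prove this theorem---it cites it from Heller via Mac Lane's book \cite{MR1344215}---and your argument (lift the comparison $E$-epimorphism $\pi: P\rightarrow N$ supplied by ``enough $E$-projectives'' through $f$ using the hom-surjectivity, then apply the sectile-epics axiom to $\pi = f\circ\tilde\pi$) is exactly the standard one, including your correct side remark that the epimorphism hypothesis on $f$ is redundant in the reverse implication.
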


\bibliography{/home/asalch/texmf/tex/salch}{}
\bibliographystyle{plain}
\end{document}